\documentclass[a4paper]{amsart}
\usepackage{xcolor} 
\usepackage{graphicx}
\usepackage[toc]{appendix} 
\usepackage{amsmath, amssymb}
\usepackage{amsfonts} 
\usepackage{amsthm} 
\usepackage{systeme}
\usepackage[top=3cm, bottom=3cm,inner=1.5cm, outer=1.5cm]{geometry}	
\usepackage{color}   
\usepackage{hyperref}
\hypersetup{
    colorlinks=true, 
    linktoc=all,    
    linkcolor=black,  
    citecolor = black,
}
\usepackage{manfnt}

\newtheorem{theorem}{Theorem}[section] 
\newtheorem{lemma}[theorem]{Lemma}

\newtheorem{remark}[theorem]{Remark}
\newtheorem{prop}[theorem]{Proposition}

\numberwithin{equation}{section}

\newcommand{\abs}[1]{\lvert #1 \rvert}
\newcommand{\sumstack}[1]{{\substack{#1}}}
\newcommand{\nsmod}[1]{\text{ }\mathrm{mod}\text{ $#1$}} 
\newcommand{\fq}[0]{\mathbb{F}_q}

\DeclareMathOperator*{\GLtwo}{GL_2}
\DeclareMathOperator*{\SLtwo}{SL_2}
\DeclareMathOperator*{\vol}{Vol}
\theoremstyle{definition}

\title{Averages of Artin characters and their relation to the Davenport--Heilbronn theorem }
\author{Victor Ahlquist}
\address{Department of Mathematical Sciences, Chalmers University of Technology and the University \newline
	\rule[0ex]{0ex}{0ex}\hspace{8pt} of Gothenburg, SE-412 96 Gothenburg, Sweden}
\subjclass[2020]{11R16, 11R42, 11R45 (Primary) 11R59, 11M50 (Secondary)}
\email{vicahlqu@chalmers.se} 
\date{}
\begin{document}

\begin{abstract}
    We bound the error term in the counting function of cubic extensions of $\mathbb{Q}$ by $\mathcal{O}_\epsilon\left(X^{\theta+\epsilon}\right)$, with $\theta < 2/3$, improving upon results due to Bhargava, Taniguchi, and Thorne. The proof proceeds by relating a certain discriminant-reducing sieve, due to Bhargava, Shankar, and Tsimerman, to averages of Artin characters associated with the Artin $L$-function $\zeta_K(s)/\zeta(s)$, with $K$ a cubic field. Conditional on the Generalised Riemann Hypothesis (GRH) for Dedekind zeta functions $\zeta_K(s)$, we obtain further savings and show that the error term is $\mathcal{O}\left(X^{2/3-1/51+\epsilon}\right)$. Moreover, for the analogous smooth counting problem, we obtain a bound $\mathcal{O}\left(X^{1/2+\epsilon}\right)$ for the error term, conditionally improving results of Shankar, Södergren, and Templier. Our arguments can also handle finitely many splitting conditions.

    The methods we use to study cubic fields are natural extensions of methods which are used to study the one-level density associated with the above family of Artin $L$-functions. In this direction, we improve the admissible support $[-\sigma,\sigma]$, from the previously known admissible value $\sigma < 2/5$, to $\sigma < 1$, conditional on the GRH. This also proves that at least $75\%$ of these $L$-functions are non-vanishing at the central point, conditionally improving upon results of Shankar, Södergren and Templier. 

    Finally, one may study similar questions over a rational function field $\fq(T)$, with $q$ coprime to $2$ and $3$. Here, we obtain a proportion of non-vanishing of at least $75\%$ for the analogous family of $L$-functions. Furthermore, we obtain a bound $\mathcal{O}\left(X^{1/2+\epsilon}\right)$ for the error term in the counting function of cubic function fields, improving previous results of the author. Over function fields, our results are unconditional.
\end{abstract}

\maketitle

\section{Introduction}
Let $X\geq 1$ be a real number, and consider the collection $\mathcal{F}^\pm(X)$ of $S_3$-cubic fields ordered by discriminant, i.e. 
\begin{equation*}
    \mathcal{F}^\pm(X):= \{K: [K:\mathbb{Q}]=3,\,\,0 <\pm \mathrm{Disc}(K)\leq X,\,\,\text{$K$ non-Galois} \},
\end{equation*}
where we include only one field $K$ from every isomorphism class.
The associated counting function
\begin{equation}\label{firstsharpctdef}
    N^\pm(X) := \#\mathcal{F}^\pm(X)
\end{equation}
was first studied by Davenport and Heilbronn \cite{DH}, who proved that
\begin{equation*}
    N^\pm(X)=C_1^\pm X+o(X),
\end{equation*}
as $X\to \infty$, for explicit constants $C_1^\pm$. We remark that the problem of counting $S_3$-cubic fields is essentially equivalent to the problem of counting all cubic fields, as the number of cyclic cubic fields with $0 <\mathrm{Disc}(K) \leq X$ is $\ll X^{1/2}$. Indeed, this follows from more precise asymptotics obtained by Cohn \cite{Cohn}. 

A power-saving bound for the error term above was proven by Belabas--Bhargava--Pomerance \cite{BBP}. The existence of a secondary term in the counting function $N^\pm(X)$ was conjectured by Roberts \cite{Roberts} and proven independently by Bhargava--Shankar--Tsimerman \cite{BST} and Taniguchi--Thorne \cite{TTcube}. To date, the most precise asymptotic is
\begin{equation}\label{BTTasymp}
    N^\pm(X)=C_1^\pm X+C_2^\pm X^{5/6} + \mathcal{O}_\epsilon\left(X^{2/3+\epsilon}\right),
\end{equation}
with $C_2^\pm$ explicit constants, due to Bhargava--Taniguchi--Thorne \cite{BTT}. In fact, they prove a slightly stronger result, where one may replace the $\epsilon$ with a certain power of $\log X$.

We improve the error term in the asymptotic \eqref{BTTasymp}. Our first result in this direction is a quite modest unconditional improvement of the above. 
\begin{theorem}\label{uncondsharpctthm}
    We have
    \begin{equation*}
        N^\pm(X)=C_1^\pm X+C_2^\pm X^{5/6} + \mathcal{O}_\epsilon\left(X^{2/3-c_B+\epsilon}\right),
    \end{equation*}
    where
    \begin{equation*}
        c_B = \frac{1}{288B+15},\text{ and } B= 216\cdot 10^7.
    \end{equation*}
    
\end{theorem}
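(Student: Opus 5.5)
The plan is to follow the strategy announced in the abstract. We start from the Bhargava--Shankar--Tsimerman parametrisation of cubic rings by $\GLtwo(\mathbb{Z})$-orbits of binary cubic forms, and we run their sieve to the maximal (i.e.\ $p$-maximal for all $p$) forms. Write the count of irreducible maximal forms of discriminant up to $X$ via inclusion--exclusion over squarefree $q$, with the condition "non-maximal at every $p\mid q$''. For small $q\le Q$ we use the Bhargava--Taniguchi--Thorne counts of forms in $V_{\mathbb{Z}}$ lying in finite unions of residue classes mod $q^2$. Each term contributes main terms $C_1^\pm X$, $C_2^\pm X^{5/6}$ (after summation in $q$) plus an error of size roughly $X^{2/3+\epsilon}q^{c}$, with $c$ small from the local densities. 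The total small-$q$ error is then $X^{2/3+\epsilon}Q^{\delta}$ for a small $\delta$. The tail $q>Q$ is the problem: BTT bound it trivially (via the uniformity estimate of Bhargava / BST) by $X^{1+\epsilon}/Q$, and balancing gives exactly $2/3$.

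The new input is the BST discriminant-reducing identity. A non-maximal cubic ring at $p$ with an overorder of index $p$ corresponds to a ring of discriminant reduced by $p^2$. Hence the number of forms non-maximal at all $p\mid q$ can be rewritten as a sum over maximal cubic fields $K$ of discriminant $\le X/q^2$, weighted by the number of index-$q$ suborders. This weight is multiplicative in $q$, with local factor $1+\lambda_K(p)$-type expressions involving $\lambda_K(p)=a_K(p)$, the coefficient of the Artin $L$-function $\zeta_K(s)/\zeta(s)$. Thus the large-$q$ tail becomes
\begin{equation*}
\sum_{Q<q} \mu(q)\sum_{K\in\mathcal F^\pm(X/q^2)} \prod_{p\mid q}\bigl(\text{local weight in } a_K(p)\bigr),
\end{equation*}
plus manageable contributions from non-primitive or reducible pieces. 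Swapping order, this is an average over $K$ of Dirichlet-series partial sums in $q$. The term independent of $a_K$ recovers main-term pieces, and the remainder is an average of $\sum_{q\sim Y}\mu(q)\,a_K(q)$-type sums over $K$.

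The key analytic step, and the main obstacle, is to get a power saving for $\sum_{q\sim Y}\mu(q)a_K(q)$ for most $K$ without GRH. I would use a zero-density estimate for the family $\{\zeta_K/\zeta\}$ (e.g.\ a large-sieve-type zero-density result for Artin $L$-functions of $S_3$ fields, as in Kowalski--Michel or Thorner--Zaman-type statements). These give that all but $X^{\eta}$ of the $K$ have no zeros in a box $\Re s>1-\delta$, $|\Im s|\le T$. For such $K$, a standard contour argument gives $\sum_{q\le Y}\mu(q)a_K(q)\ll Y^{1-\delta'}$ once $Y\ge X^{\text{small}}$. The exceptional fields are bounded trivially. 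The huge constant $B=216\cdot10^7$ is presumably the exponent in the log-free zero-density estimate, i.e.\ $\#\{K:\text{zero in box}\}\ll (XT)^{B(1-\sigma)}$.

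Choosing $\sigma=1-\delta$ and optimising $\delta$, $Q$ and $T$ against the trivial bounds for exceptional fields and against the small-$q$ error $X^{2/3}Q^{\delta}$ gives the saving $c_B=1/(288B+15)$. The bookkeeping constants $288$ and $15$ come from the conductor $\asymp X^2$ of the relevant family and the loss in the uniformity estimate.
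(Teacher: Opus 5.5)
Your overall architecture matches the paper's: BST discriminant reduction, $\mu(a)\lambda_K(a)=\mu_K(a)$ as coefficients of $1/L(s,\rho_K)$, a Thorner--Zaman zero-density estimate so that most $L(s,\rho_K)$ have no zeros near $s=1$, and trivial bounds for the exceptional fields. But the error bookkeeping that is supposed to produce the saving fails at the decisive step. You take the small-range error to be $X^{2/3+\epsilon}Q^{\delta}$. With that input no choice of $Q$ gives an exponent below $2/3$, however much you save for $q>Q$, so your final optimisation cannot yield $c_B>0$. It also contradicts your own claim that balancing against the trivial tail $X^{1+\epsilon}/Q$ gives exactly $2/3$.

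What \cite[Theorem 3.1]{BTT} actually supplies, for the count weighted by $\eta_d$ up to a reduced discriminant $Y=X/(a^2b^4c^4)$, is an error $Y^{3/5+\epsilon}d^{2/5}$. The paper takes the small range to be $a_1b_1c\ell_2\ell_1^{2/3}\le Q$, after writing $\eta'_{a_1b_1}=\sum_{d\mid a_1b_1}\mu(a_1b_1/d)\eta_d$ so that the nonnegativity hypothesis holds. Summed over that range, the error is $X^{3/5+\epsilon}Q^{1/5}$. The old exponent $2/3$ is exactly the balance of $X^{3/5}Q^{1/5}$ against $X/Q$ at $Q=X^{1/3}$. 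The theorem comes from improving the tail to $X^{1+\epsilon}Q^{-(2-\sigma)}$ with $1-\sigma=1/(16B)$ and rebalancing. Taking $Q=X^{a}$ with $a=\frac25\bigl(\frac65+\frac1{16B}\bigr)^{-1}$ gives $X^{3/5+a/5}=X^{2/3-1/(288B+15)}$.

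So $288B+15=3(96B+5)$ comes from this balance, not from conductors. The choice $B(1-\sigma)=1/16$ is dictated by Proposition \ref{zerodenspropcube}. Thorner--Zaman applies to the Galois sextic family with an overlap factor $m_{\mathcal G}$, and after stratifying by the size of the quadratic resolvent this costs $R^{1/2+o(1)}$. One therefore gets $R^{1/2+\epsilon}(R^3T)^{B(1-\sigma)+\epsilon}$ rather than your $(XT)^{B(1-\sigma)}$. With $T=R$ this gives $R^{3/4+\epsilon}$ exceptional fields and $R^{7/8+\epsilon}$ exceptional forms.

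Second, the reducible forms are not a ``manageable'' remainder. In the large range the forms with exactly one root contribute the tail of $\beta X\sum\mu(a_1\ell_1c)\,a_1^{-2}\ell_1^{-2}b_1^{-4}\ell_2^{-4}c^{-4}$. That tail is only $O(X/Q)$, i.e.\ exactly the barrier you are trying to beat, so it must be matched rather than bounded. The paper writes $\eta'_p=\tilde\lambda_p+(p-1)\mathbf{1}_{p,(0)}$ and $\lambda'_p=\tilde\lambda_p-1$. It then continues the twisted reducible Shintani zeta function to $\Re(s)>1/2$ through Dirichlet $L$-functions (Lemma \ref{redshintlemma}, with convexity replacing GRH). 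Finally it checks that the residue at $s=1$ is precisely the complement of the small-range sum \eqref{redsmallsum}. If you strip reducible forms at the outset, you need the same asymptotic in the small range instead.

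Relatedly, ``maximal fields of discriminant $\le X/q^2$ weighted by index-$q$ suborders'' is not an identity for $N^\pm(W_q;X)$, since subrings of index a proper multiple of $q$ are missed. The correct reduction is the BST identity for all forms weighted by $\eta_{ab}$, together with $\eta'_p=\lambda_p+p\mathbf{1}_{(0),p}$. Then for each irreducible form $x$ the $a$-sum is a partial sum of $\mu_K$, up to the $\ll\abs{\mathrm{Disc}(x)}^{\epsilon}$ primes dividing the index.
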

\begin{remark}
    Because of the recent proof of the quasi-Riemann hypothesis for Dedekind zeta functions \cite{quasidede}, the theorem above is superseded by Theorem \ref{quasiapplthm}, which uses the quasi-Riemann hypothesis as an additional input. Nevertheless, we have elected to keep the above result, and its proof, to showcase the non-necessity of the quasi-Riemann hypothesis for improving the exponent in the error term past the previous best value $2/3$.
\end{remark}

The main ingredient used in the proof of Theorem \ref{uncondsharpctthm} is a relation between a discriminant-reducing sieve used to study cubic fields and certain averages of Artin characters $\lambda_K$, associated with cubic fields $K$. Bounds for such averages are closely related to the locations of the zeros of the Artin $L$-functions $L(s,\rho_K) := \zeta_K(s)/\zeta(s)$. By using a zero-density result of Thorner--Zaman \cite[Theorem 1.1]{TZ}, we are able to bound these averages nontrivially, and obtain Theorem \ref{uncondsharpctthm} above. 

The GRH for $\zeta_K(s)$ asserts that all nontrivial zeros of $\zeta_K(s)$ lie on the line $\Re(s)=1/2$, whence the same holds for zeros of $L(s,\rho_K)$. As a consequence, we are able to significantly improve the error term in Theorem \ref{uncondsharpctthm} under the GRH.
\begin{theorem}\label{sharpgrhthm}
    Assume the GRH for all Dedekind zeta functions attached to quadratic and cubic extensions of $\mathbb{Q}$. Then,
    \begin{equation*}
        N^\pm(X)=C_1^\pm X+C_2^\pm X^{5/6} + \mathcal{O}_\epsilon\left(X^{2/3-1/51+\epsilon}\right).
    \end{equation*}
\end{theorem}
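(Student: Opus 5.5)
We follow the Bhargava--Shankar--Tsimerman / Bhargava--Taniguchi--Thorne framework. Cubic fields correspond, via the Delone--Faddeev--Davenport--Heilbronn parametrisation, to $\GLtwo(\mathbb{Z})$-orbits of integral binary cubic forms that are maximal at every prime $p$. We write the maximality condition as an inclusion--exclusion over squarefree $q$, namely $\sum_q \mu(q)\,\#\{\text{forms nonmaximal at all } p\mid q\}$. This splits $N^\pm(X)$ into two ranges. For $q\le Q$ we count lattice points in fundamental domains. For $q>Q$ we need a tail estimate.

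For the small range we use the BTT counting of lattice points with Shintani zeta function input. This gives the main terms $C_1^\pm X + C_2^\pm X^{5/6}$ with an error roughly $\ll X^{2/3+\epsilon}Q^{a}$ for some small exponent $a$. The large range is where the improvement lies. Forms nonmaximal at $q$ correspond, via the discriminant-reducing sieve of BST, to overorders. Concretely, a form nonmaximal at $p$ is mapped to a form of discriminant $\mathrm{Disc}/p^2$ whose local type at $p$ is constrained. The tail $\sum_{q>Q}$ then becomes a sum over cubic fields $K$ (or rings) of discriminant $\le X/q^2$. The summand counts suborders of index $q$, and for $p$ unramified this count is expressible through the splitting type of $p$ in $K$, i.e. through $1+\lambda_K(p)$, where $\lambda_K$ is the Artin character of $\rho_K$. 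The trivial part $1$ reproduces the BST-type tail and cancels against the Möbius sign structure. The remaining piece is of the shape
\begin{equation*}
\sum_{K:\,|\mathrm{Disc} K|\le X/q^2}\ \sum_{q>Q}\mu(q)\prod_{p\mid q}\lambda_K(p)\cdots,
\end{equation*}
which is an average of $\mu\lambda_K$ over $q$ in a range, weighted smoothly.

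Under the GRH for $\zeta_K$ and for the quadratic resolvent fields, we have $\sum_{n\le Y}\mu(n)\lambda_K(n)\ll Y^{1/2+\epsilon}|\mathrm{Disc} K|^\epsilon$. This is the generating function $1/L(s,\rho_K)$, holomorphic for $\Re s>1/2$. Ramified primes are handled as a finite Euler factor correction. Inserting this bound, each field contributes a square-root saving in the $q$-sum instead of the trivial bound. The tail is then bounded by roughly $X^{1+\epsilon}Q^{-3/2}$ or a comparable expression, after summing over fields of discriminant $\le X/q^2$ dyadically. The count $\#\{K:|\mathrm{Disc}|\le Y\}\ll Y$ is used for this summation.

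Finally we balance the two error terms, $X^{2/3}Q^{a}$ against the GRH tail, and optimize $Q$. With the exponents coming out of the lattice-point step (in particular, the error in the BTT count for forms nonmaximal at $q$, which grows like a power of $q$), the optimization should yield the saving $1/51$. The main obstacle is the middle step. It requires making the sieve-to-character relation exact, with the nonmaximal-at-$q$ count expressed by $\lambda_K$ for all local types, including totally ramified and partially ramified $p$ and the non-squarefree structure. It also requires controlling the uniformity in $\mathrm{Disc} K$ of the GRH bound so that the $|\mathrm{Disc}K|^\epsilon$ losses and the contribution of reducible forms do not destroy the saving. My first attempt would be to work with smooth weights in $q$ and apply Perron's formula to $\sum \mu(q)\lambda_K(q)q^{-s}$ shifted to $\Re s=1/2+\epsilon$.
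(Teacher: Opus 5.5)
\textbf{There is a genuine gap: the balancing step, as you set it up, cannot reach $X^{2/3-1/51}$.} Your overall architecture is the paper's: the maximality sieve, the BST discriminant-reducing identity, writing the root count as $1+\lambda$, and using GRH for partial sums of $\mu\lambda_K$ (the coefficients of $1/L(s,\rho_K)$) to improve the large-range tail from $X/Q$ to $X^{1+\epsilon}Q^{-3/2}$.

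The problem is the small range. If its error were $X^{2/3+\epsilon}Q^{a}$ with $a\ge 0$, no choice of $Q$ could bring the total below $X^{2/3}$. So the claim that ``the optimization should yield $1/51$'' has nothing behind it. What you are missing is that the small range is of $X^{3/5}$-quality. \cite[Theorem 3.1]{BTT} gives, for each $N^\pm(\eta_d;Y)$, an error $O(Y^{3/5+\epsilon}d^{2/5})$, and summed over the small range this is $X^{3/5+\epsilon}Q^{1/5}$. BTT's exponent $2/3$ only appears after balancing this against the trivial tail $X/Q$ at $Q=X^{1/3}$. Balancing instead against $X/Q^{3/2}$ gives $Q=X^{4/17}$ and $X^{3/5}Q^{1/5}=X/Q^{3/2}=X^{11/17}=X^{2/3-1/51}$. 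The remaining costs are smaller: $X/Q^{3/2}+X^{5/6}/Q$ for completing the main and secondary terms, and $X^{1/2+\epsilon}$ from reducible forms. Since that theorem needs nonnegative weights, the small range must also be rewritten via $\eta'_m=\sum_{d\mid m}\mu(m/d)\eta_d$.

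\textbf{Your cut $q\le Q$ also does not deliver $X/Q^{3/2}$ in the large range.} Expand $\eta_{ab}=\sum_{d\mid ab}\eta'_d$ with $a=a_1\ell_1$, $b=b_1\ell_2$. The terms whose large part sits in $\ell_1$ carry no character, only $\mu(\ell_1)N^\pm(\cdot\,;X/\ell_1^2\cdots)$. The only bound you have for them is the trivial one, $\sum_{\ell_1>Q}X/\ell_1^2\asymp X/Q$, which is exactly BTT's bottleneck. Your claim that the trivial part ``cancels against the M\"obius sign structure'' has no mechanism behind it.

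The paper handles this by cutting at $a_1b_1c\ell_2\ell_1^{2/3}\le Q$. Character-free terms then enter the large range essentially only when $\ell_1> Q^{3/2}$, where the trivial bound already gives $X/Q^{3/2}$. Smaller $\ell_1$ are harmless in the small range, because the error there involves the convergent sum $\sum\ell_1^{-6/5}$. One could alternatively try to exploit RH for $\zeta$ in the $\mu(\ell_1)$-sums, but that needs its own argument.

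\textbf{Reducible forms are where GRH for quadratic fields enters, and your plan only names this as an obstacle.} The small-range reducible main term $\beta X\sum\mu(a_1\ell_1c)/(a_1^2\ell_1^2b_1^4\ell_2^4c^4)$ cannot be trivially completed to all moduli. The paper evaluates the large-range one-root forms via the twisted zeta function $\xi^\pm_{\mathrm{quad}}(s,\lambda'_{a_1b_1})$ of Lemma \ref{redshintlemma}, continued to $\Re s>1/2$ under GRH. Its residue supplies exactly the complementary sum, so the two combine into $\frac12 Q^\pm(X)$, and the shifted contour costs only $X^{1/2+\epsilon}$. You need some argument of this kind.
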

Numerical experiments, performed by Belabas \cite{Belabas} and analysed in Roberts \cite{Roberts}, indicate that the true error term is possibly of square-root size. We are unable to obtain a result of this quality for the sharp counting function $N^\pm(X)$. However, for the analogous smooth counting problem, we obtain a square-root error term.
\begin{theorem}\label{smoothctthm}
    Let $h:[0,\infty)\to\mathbb{R}$ be a $C^\infty$ function with compact support. Denote by $K^\pm_m$ the number of $S_3$-cubic fields of discriminant equal to $\pm m$. Then, under the same conditions as in Theorem \ref{sharpgrhthm}, we have for the smooth counting function
    \begin{equation}\label{firstsmoothctdef}
        N^\pm(h;X) := \sum_{m\geq 1}K^\pm_mh\left(\frac{m}{X}\right)
    \end{equation}
    that
    \begin{equation*}
        N^\pm(h;X) = C_1^\pm \Tilde{h}(1) X+C_2^\pm \Tilde{h}(5/6) X^{5/6} + \mathcal{O}_{\epsilon,h}\left(X^{1/2+\epsilon}\right),
    \end{equation*}
    where 
    \begin{equation*}
        \Tilde{h}(s)= \int_{0}^\infty h(x)x^{s-1}dx
    \end{equation*}
    denotes the Mellin transform of $h$.
\end{theorem}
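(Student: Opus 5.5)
We follow the strategy of the paper: parametrise cubic fields by $\GLtwo(\mathbb{Z})$-orbits of integral binary cubic forms (Delone--Faddeev), so that $K^\pm_m$ counts maximal irreducible forms of discriminant $\pm m$. We then remove the non-maximal forms with the inclusion--exclusion sieve of Bhargava--Shankar--Tsimerman. Concretely,
\begin{equation*}
N^\pm(h;X)=\sum_{q\ge1}\mu(q)\,N^\pm_{q}(h;X),
\end{equation*}
where $N^\pm_q(h;X)$ smoothly counts irreducible forms that are non-maximal at every prime $p\mid q$. For $q\le Q$ we use the smooth count of forms with local conditions modulo $q^2$, obtained via Shintani zeta functions as in Taniguchi--Thorne and Bhargava--Taniguchi--Thorne. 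With a smooth weight this gives the main terms $C\,\Tilde h(1)X$ and $C'\,\Tilde h(5/6)X^{5/6}$ with local densities. Because $h$ is smooth, the Shintani error can be pushed by contour shifting to the next poles. This leaves an error of shape $\sum_{q\le Q} X^{1/2+\epsilon}\cdot(\text{Fourier decay at }q)$, which we expect to total $\ll X^{1/2+\epsilon}Q^{\delta}$ for small $\delta$, by the bounds on Fourier transforms of the nonmaximality sets.

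The tail $q>Q$ is where the new input enters. A form non-maximal at $p$ corresponds to an order of index $p$ in a field of discriminant $D/p^2$. Using this discriminant-reducing map of Bhargava--Shankar--Tsimerman, the count of forms non-maximal at all $p\mid q$ with discriminant near $X$ becomes a sum over fields $K$ with $|\mathrm{Disc}(K)|\le X/q^2$. Each field $K$ is weighted by the number of index-$q$ suborders, which is multiplicative in $q$ and expressible through the local splitting type of $p$ in $K$, i.e.\ through $1+\lambda_K(p)$ and related quantities. Summing the Möbius tail over $q$ therefore produces averages
\begin{equation*}
\sum_{K}\ \sum_{q>Q}\frac{\mu(q)\,a_K(q)}{q^{2}}\,h\!\left(\frac{q^2|\mathrm{Disc}(K)|}{X}\right),
\end{equation*}
where $a_K$ is built from $\lambda_K$. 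After a Mellin inversion in $q$, this is essentially a contour integral of $1/L(s,\rho_K)$ (times $\zeta$-factors and harmless Euler products) against $\Tilde h$. Under GRH for $\zeta_K$ and for quadratic fields, $1/L(s,\rho_K)$ is holomorphic in $\Re s>1/2$ with $\ll|\mathrm{Disc}(K)|^\epsilon|s|^\epsilon$ growth. Shifting to $\Re s=1/2+\epsilon$ shows that the $q$-tail for a field of discriminant $D$ contributes $\ll (X/D)^{\ldots}Q^{-1+\epsilon}$ rather than the trivial $Q^{-1}$ with no cancellation lost. More precisely, the sum $\sum_{q\sim Y}\mu(q)a_K(q)$ is $\ll Y^{1/2+\epsilon}D^\epsilon$.

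The main obstacle is making these estimates close up. First, the $q$-tail must be summed over all $K$ with $|\mathrm{Disc}(K)|\le X/Q^2$; using $N^\pm(Y)\ll Y$ and the GRH square-root cancellation gives roughly $\sum_{Q<q}(X/q^2)\cdot$ (saving), and this has to be balanced against the $q\le Q$ error. I would try $Q\approx X^{1/4}$ and aim for a total error of $X^{1/2+\epsilon}$. Second, the square-root cancellation must hold uniformly when the smooth cutoff couples $q$ and $D$. Here smoothness of $h$ is crucial: it lets us separate variables via Mellin transforms with rapidly decaying $\Tilde h$, so the sharp-cutoff losses behind Theorem \ref{sharpgrhthm} do not occur. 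Third, reducible forms and non-$S_3$ contributions must be handled, but these are $O(X^{1/2+\epsilon})$ by standard arguments.
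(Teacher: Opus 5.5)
There is a genuine gap, and it is in the range $q\le Q$, which you treat as routine. Your treatment of the tail $q>Q$ matches, in spirit, the paper's large range. There the oscillation appears in $\sum\mu(a_1)\lambda_{a_1}(x)$, via the Bhargava--Shankar--Tsimerman identity and $\eta'_p=\lambda_p+p\mathbf{1}_{(0),p}$, and the tail costs $X^{1+\epsilon}Q^{-3/2}$. Your claim that the Shintani error for $q\le Q$ totals $\ll X^{1/2+\epsilon}Q^{\delta}$ has no basis. After moving the contour past $s=5/6$ (the paper goes to $\Re(s)=-\epsilon$, also picking up the residue of $\Tilde{h}$ at $s=0$), the $X$-dependence is only $X^{-\epsilon}$. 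Everything is then governed by the dual side of the functional equation. It carries the conductor factor $(a_1'b_1')^{3-4s}$ against a dual function supported on forms of type $(1^3)$ at the modulus, since for $p\neq 3$ one has $\widehat{\eta'_p}=p^{-1}(\mathbf{1}_{p,(0)}+\mathbf{1}_{p,(1^3)})$. Bounding $\xi_\pm(1-s,\mathbf{1}_{a_1'b_1',(1^3)})$ trivially by the density $\approx(a_1'b_1')^{-2}$ leaves about $a_1'b_1'$ per term, so about $Q^{2}$ in total. Balanced against $X/Q^{3/2}$, this gives only about $X^{4/7}$. Your numerology also fails on its own terms: with $Q\approx X^{1/4}$ the tail alone is $X^{5/8}$. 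You need $Q=X^{1/3}$ together with a small-range bound $\ll Q^{3/2+\epsilon}$.

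That bound is the missing idea, and it requires a second use of GRH, on the dual side of the small range. The paper first passes from the dual forms to maximal overrings, using the count of suborders with $(1^3)$ conditions (Lemma \ref{totramsubordct}). It then applies Shankar--Thorne's Theorem 11 to count cubic fields with the prescribed total ramification and splitting, fibred over quadratic resolvents $D$. This exposes the dependence on the modulus through the characters $\theta_E(eP_{(1^3)})$ with $E\in\mathcal{L}_3(rP_{(111)},D)$, and through $\chi_{-3D}$ and $\chi_D$. Summing over $a_1'$ using GRH for these cubic fields $E$ and for quadratic $L$-functions, together with the class-group average \eqref{aveclassgpbd}, gives $Q^{3/2+\epsilon}$. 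The non-oscillating part is shifted to $\Re(s)=1/2-\epsilon$; its residue cancels the $s=0$ residue of $\Tilde{h}$, and the remainder is $X^{1/2+\epsilon}$. Without some such cancellation on the dual side, your argument cannot reach $X^{1/2}$.

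Separately, reducible forms are not an $O(X^{1/2+\epsilon})$ afterthought. They contribute terms of size $X$ (the $\beta$-sums), whose main terms must be matched exactly across the two ranges. Controlling the remainder uniformly in the modulus needs Lemma \ref{redshintlemma}, which uses GRH for Dirichlet $L$-functions.
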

\begin{remark}
    Shankar--Södergren--Templier \cite[Theorem 6.10]{SST} unconditionally obtained the above asymptotic formula with an error term of size $\mathcal{O}_{\epsilon,h}\left(X^{2/3+\epsilon}\right)$, matching the previous best asymptotic \eqref{BTTasymp} for the sharp counting function. 
\end{remark}
The arguments used to prove Theorems \ref{uncondsharpctthm}, \ref{sharpgrhthm} and \ref{smoothctthm} can also be modified to handle finitely many splitting conditions. Recall that a rational prime $p\in \mathbb{Z}$ can split in five different ways in a cubic extension $K$. Indeed, $p$ is either inert, partially split, completely split, partially ramified, or totally ramified. We denote these splitting types by $(3), (21), (111), (1^21)$, and $(1^3)$, respectively. Let $\Sigma=(\Sigma_p)_p$ be a collection of specifications of splitting types at finitely many primes $p$. Denote by $P_{\Sigma,S}$ the product of all primes $p$ for which $\Sigma_p$ equals the splitting type $S$.

Using the above setup, we may define counting functions $N^\pm(\Sigma;X)$ and $N^\pm(h,\Sigma;X)$ as in \eqref{firstsharpctdef} and \eqref{firstsmoothctdef}, except that we restrict to counting fields which satisfy the splitting conditions described by $\Sigma$. We then have the following result, improving upon \cite[Theorem 1.3]{BTT}.
\begin{theorem}\label{splitcondthmQ}
    Let $\Sigma$ be a collection of splitting types as described above. Then, unconditionally, we have that
    \begin{equation*}
       N^\pm(\Sigma;X) = C_1^\pm(\Sigma)X+C_2^\pm(\Sigma)X^{5/6}+E_1(\Sigma)\mathcal{O}_\epsilon(X^{2/3-c_B+\epsilon}) ,
    \end{equation*}
    under the assumption that $P_{\Sigma,(1^3)}\leq X^{1/100}$. Here $C_1^\pm(\Sigma)$ and $C_2^\pm(\Sigma)$ are the constants from \cite[Theorem 1.3]{BTT}.

    Conditional on the GRH for Dedekind zeta functions associated with quadratic and cubic extensions of $\mathbb
    Q$, we have
    \begin{equation*}
        N^\pm(\Sigma;X) = C_1^\pm(\Sigma)X+C_2^\pm(\Sigma)X^{5/6}+E_2(\Sigma)\mathcal{O}_\epsilon(X^{2/3-1/51+\epsilon}),
    \end{equation*}
    and
    \begin{equation*}
        N^\pm(h,\Sigma;X) = C_1^\pm(\Sigma)\Tilde{h}(1)X+C_2^\pm(\Sigma)\Tilde{h}(5/6)X^{5/6}+E_3(\Sigma)\mathcal{O}_\epsilon(X^{1/2+\epsilon}),
    \end{equation*}
    with no assumptions on the size of $P_{\Sigma,(1^3)}$.

    The error terms $E_i(\Sigma)$ are defined multiplicatively
    \begin{equation*}
        E_i(\Sigma)=\prod_p E_i(\Sigma_p),
    \end{equation*}
    with
    \begin{equation*}
    \begin{split}
        &E_1(\Sigma_p)=\begin{cases}
            p^{2/3+2c_B}, &\text{ if $\Sigma_p\in \{(111),(21),(3), (1^21) \}$,}\\
            p^{3c_B-1}, &\text{ if $\Sigma_p=(1^3)$,}
        \end{cases}\\
        &E_2(\Sigma_p)=\begin{cases}
            p^{12/17}, &\text{ if $\Sigma_p\in \{(111),(21),(3), (1^21) \}$,}\\
            p^{-16/17}, &\text{ if $\Sigma_p=(1^3)$,}
        \end{cases}\\
        &E_3(\Sigma_p)=\begin{cases}
            p, &\text{ if $\Sigma_p\in \{(111),(21),(3), (1^21) \}$,}\\
            1, &\text{ if $\Sigma_p=(1^3)$.}
        \end{cases}
    \end{split}
    \end{equation*}
\end{theorem}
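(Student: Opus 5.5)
The plan is to run the proofs of Theorems \ref{uncondsharpctthm}, \ref{sharpgrhthm} and \ref{smoothctthm} again with the lattice of binary cubic forms replaced by the set of forms satisfying the local conditions $\Sigma$, and to track how each error term depends on the primes in $\Sigma$.

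\textbf{Parametrisation and sieve.} Via the Delone--Faddeev/Davenport--Heilbronn correspondence, $N^\pm(\Sigma;X)$ counts $\GLtwo(\mathbb{Z})$-orbits of irreducible maximal binary cubic forms whose reduction modulo $p$ (or modulo $p^2$ when $p$ ramifies) lies in a prescribed union of residue classes for each $p$ with $\Sigma_p$ specified. Let $q=\prod_{p\in\Sigma}p$ for the unramified types and $p^2$ for the ramified ones. Then the local condition at $p$ is a finite union of cosets of the lattice $p^2V(\mathbb{Z})$ (or $pV(\mathbb{Z})$), with density matching the local factor in $C_i^\pm(\Sigma)$.

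\textbf{Discriminant-reducing sieve at the remaining primes.} Maximality at primes $\ell\nmid q$ is imposed by the same inclusion--exclusion as before, over squarefree $n$ coprime to $q$. I expect the main-term extraction to be identical to \cite[Theorem 1.3]{BTT}. This is because counting lattice points in a union of cosets modulo $q^2n^2$ in the fundamental domain gives the Shintani zeta function main terms at $s=1$ and $s=5/6$, scaled by local densities. The large-$n$ tail, which in the earlier proofs is converted into averages of $\lambda_K$ over cubic fields $K$, now becomes the same averages restricted to fields with the local specifications $\Sigma$. Such a restriction can be written as a linear combination of twists $\lambda_K(m)\chi(K)$, where $\chi$ detects the splitting type at $p\mid q$. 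The twist costs at most a factor polynomial in $p$ in the conductor aspect. It is controlled by Thorner--Zaman zero density (unconditionally) or by GRH, exactly as for the untwisted averages.

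\textbf{Bookkeeping of the $\Sigma$-dependence.} The proof then reduces to optimising the same parameter (the cutoff $Y$ in the sieve) in each of the three settings, now with every error term carrying the appropriate power of $p$. There are two sources of such powers.
\begin{itemize}
\item For unramified and partially ramified conditions, the lattice-point error for cosets modulo $p$ or $p^2$ in a region of volume $X$ picks up the boundary factor. This produces $p^{1}$ in the smooth case. After balancing against the sieve tail, it becomes $p^{2/3+2c_B}$ or $p^{12/17}$.
\item For total ramification, the coset has density about $p^{-2}$ and the discriminant is divisible by $p^2$, so the effective size drops. This gives the saving $p^{3c_B-1}$ or $p^{-16/17}$, and $1$ in the smooth case.
\end{itemize}
Multiplicativity of $E_i(\Sigma)$ follows from the Chinese remainder theorem, since the conditions at distinct primes are independent.

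The hypothesis $P_{\Sigma,(1^3)}\le X^{1/100}$ in the unconditional case is needed so that the Thorner--Zaman zero-density input still applies, the relevant conductors staying below a small power of $X$. It also ensures the optimised $Y$ remains in the admissible range.

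\textbf{Main obstacle.} I expect the hardest step to be this last optimisation, carried out uniformly in $q$: checking that the twisted Artin-character averages obey the same bounds with the claimed $p$-exponents, and that the balancing yields exactly the stated exponents $2/3+2c_B$, $3c_B-1$, $12/17$ and $-16/17$. I would first redo the single-prime case in each of the three settings and read off the exponents there, then extend to general $\Sigma$ by multiplicativity.
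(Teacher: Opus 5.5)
There is a genuine gap. The content of this theorem is the $p$-dependence of the competing error terms, and your proposal asserts the exponents rather than deriving them. The mechanisms it does name are also not the ones that produce them.

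In the small range the $p$-dependence is not a lattice-point boundary term; there is no boundary in the smooth case at all. Through Landau's method \cite[Theorem 3.1]{BTT}, it is governed by the density of the local weight and the size of its Fourier transform. For the ramified types the paper does not impose ``maximal and of type $S$ at $p$'' as a condition modulo $p^2$, as you propose. Instead it uses the Bhargava--Shankar--Tsimerman identities \eqref{totramdiscred} and \eqref{partramdiscred}. In these only the mod-$p$ weights $\eta_p$, $\eta_{2,p}$, $\mathbf{1}_{p,(1^3)}$ and $\mathbf{1}_{p,(1^21)}$ occur, at discriminants reduced by powers of $p$, and the paper credits its pointwise $(1^3)$ bound to exactly this choice. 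Mori's explicit Fourier transforms then give the following small-range errors:
\begin{itemize}
\item $X^{3/5}Q^{1/5}p^{4/5}$ for the unramified types;
\item $X^{3/5}Q^{1/5}p^{-4/5}$ for $(1^3)$;
\item $X^{3/5}Q^{1/5}P^{4/5}$ for $(1^21)$, but only after averaging over $p\le P$, a step your plan does not contain.
\end{itemize}
In the smooth case the $p$-dependence likewise comes from the dual side. The small range costs $Q^{3/2+\epsilon}\widehat{\delta}_1(\mathbf{1}_{p,S})$, with $\widehat{\delta}_1\ll p^2$ for unramified $S$ and $\widehat{\delta}_1\ll p$ for $(1^3)$.

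Your large-range paragraph misdescribes the mechanism in a way that would break the $(1^3)$ exponents. The cancellation there is, for each fixed form $x$, in $\sum_a\mu(a)\lambda_a(x)$, i.e.\ in partial sums of $1/L(s,\rho_K)$, controlled by GRH or by Lemma~\ref{uncondpsavelemma}. The condition $\Sigma$ only restricts the outer sum over $x$, so no twist of $L(s,\rho_K)$ occurs and no power of $p$ is lost.

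What you need instead is a \emph{gain} of $p^{-2}$ for $(1^3)$. It comes from the count $\ll Y/p^2$ of forms of discriminant at most $Y$ meeting the $(1^3)$-condition, and it is indispensable. Balancing $X^{3/5}Q^{1/5}p^{-4/5}$ against $X/Q^{3/2}$ gives only $X^{11/17}p^{-12/17}$, whereas balancing against $Xp^{-2}/Q^{3/2}$ gives $X^{11/17}p^{-16/17}$.

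The remaining exponents come from the same balancing:
\begin{itemize}
\item $X^{3/5}Q^{1/5}p^{4/5}=X/Q^{3/2}$ gives $Q=X^{4/17}p^{-8/17}$ and error $X^{11/17}p^{12/17}$.
\item Replacing $Q^{3/2}$ by $Q^{1+1/(16B)}$ gives $p^{2/3+2c_B}$ and $p^{3c_B-1}$.
\item In the smooth case, $Q^{3/2}p^2=X/Q^{3/2}$ gives $X^{1/2}p$, while the $(1^3)$ case balances to at most $X^{1/2}$, so $E_3(1^3)=1$.
\end{itemize}

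This also locates the real reason for the hypothesis $P_{\Sigma,(1^3)}\le X^{1/100}$. The exceptional set of Proposition~\ref{zerodenspropcube} consists of up to $R^{3/4+\epsilon}$ fields whose $L$-function has a zero in the box. That set need not be thinned by restricting to fields totally ramified at $p$. So the $p^{-2}$ gain cannot be proved uniformly in $p$ without GRH or a quasi-Riemann hypothesis; it has nothing to do with conductors growing under a twist.
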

\begin{remark}
    The error term associated with the partially ramified splitting type may be averaged for free, in the sense described in \cite[Theorem 1.4]{BTT}.
\end{remark}
\begin{remark}
    The condition $P_{\Sigma,(1^3)}\leq X^{1/100}$ is somewhat unnatural, and arises because of the way we apply \cite[Theorem 1.1.]{TZ}, see the discussion at the end of Section \ref{splitcondsectsharp}.
\end{remark}
\begin{remark}
    We are not able to improve the error term from \cite[Theorem 1.2]{BTT} for the problem of counting cubic fields with a fundamental discriminant. It is possible that a modification of our methods could handle this case, but we do not pursue this question here.
\end{remark}
A consequence of our unconditional bounds for the error term above is that the so-called Ratios Conjecture of Conrey--Farmer--Zirnbauer \cite{RatiosConjecture}, used for studying averages of quotients of $L$-functions, predicts too small of an error term in the case of cubic Artin $L$-functions $L(s,\rho_K)$ ordered by discriminant, even when substantially modified. Indeed, work of Cho--Fiorilli--Lee--Södergren implies that given a bound for the error term as above, with the exponent of $X$ being strictly smaller than $2/3$, the true error term must be larger than the expected square-root bound, see \cite[Remark 1.5]{CFLS}.

The quasi-Riemann hypothesis for an $L$-function $L(s,f)$ asserts that $L(s,f)$ has no zeros with real part strictly larger than $1-\tau$, for some $0< \tau \leq 1/2$. A quasi-Riemann hypothesis for $L(s,f)$ implies power-saving bounds for the error terms in the summatory functions of various sequences associated with the coefficients of $L(s,f)$. Very recently, an internal OpenAI model proved the quasi-Riemann hypothesis, for Dirichlet $L$-functions (including the Riemann zeta function) \cite[Problem 003]{OpenAI}. Moreover, they have provided a lean-verified proof. Building upon the methods used to prove this result, Cureton--Thorner \cite[Theorem 1.1]{quasidede} used AxiomProver to generate and verify a proof of the quasi-Riemann hypothesis for an arbitrary Dedekind zeta function. More precisely, they proved that $\zeta_K(s)$ has no zeros with real part of $s$ strictly larger than $1-1/32$.

By replacing the use of the zero-density result of Thorner--Zaman \cite[Theorem 1.1]{TZ} with the quasi-Riemann hypothesis for $\zeta_K(s)$, we are able to improve Theorem \ref{uncondsharpctthm} and the unconditional statement from Theorem \ref{splitcondthmQ}. More precisely, we have the following result.
\begin{theorem}\label{quasiapplthm}
    Assume that $\zeta_K(s)$ has no zero with real part strictly larger than $1-\tau$, for some fixed $0<\tau\leq 1/2$, as soon as $K$ is a cubic or quadratic extension of $\mathbb{Q}$. Then,
    \begin{equation}
        N^\pm(X)=C_1^\pm X+C_2^\pm X^{5/6} + \mathcal{O}_\epsilon\left(X^{2/3-c(\tau)+\epsilon}\right),
    \end{equation}
    with 
    \begin{equation*}
        c(\tau)= \frac{\tau}{18+15\tau}.
    \end{equation*}
    Moreover,
    \begin{equation*}
        N^\pm(\Sigma;X) = C_1^\pm(\Sigma)X+C_2^\pm(\Sigma)X^{5/6}+E_4(\Sigma)\mathcal{O}_\epsilon(X^{2/3-c(\tau)+\epsilon}),
    \end{equation*}
    where
    \begin{equation*}
        E_4(\Sigma)=\prod_p E_4(\Sigma_p),
    \end{equation*}
    and
    \begin{equation*}
        E_4(\Sigma_p)=\begin{cases}
            p^{2/3+2c(\tau)}, &\text{ if $\Sigma_p\in \{(111),(21),(3), (1^21) \}$,}\\
            p^{3c(\tau)-1}, &\text{ if $\Sigma_p=(1^3)$,}
        \end{cases}
    \end{equation*}
\end{theorem}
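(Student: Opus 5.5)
We follow the strategy behind Theorem \ref{uncondsharpctthm}, and change only the input that controls averages of the Artin characters $\lambda_K$. The starting point is the Bhargava--Shankar--Tsimerman discriminant-reducing sieve, as refined by Bhargava--Taniguchi--Thorne. Write $N^\pm(X)$ as a Möbius-type sum over squarefree $q$ of counts of $\mathrm{GL}_2(\mathbb{Z})$-orbits of binary cubic forms that are non-maximal at every $p\mid q$. We split this sum at a parameter $Q=X^{\alpha}$, to be optimised at the end.

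For $q\le Q$ we count lattice points in the fundamental domain directly. Using the Shintani zeta function or geometry of numbers with Fourier analysis on $\mathbb{Z}/q$, this gives the main terms $C_1^\pm X+C_2^\pm X^{5/6}$ together with an error of shape roughly $X^{2/3+\epsilon}Q^{\beta}$. For the large-$q$ range the paper's key relation is needed. The contribution of forms non-maximal at $q$ rewrites, via the correspondence with orders in cubic fields, as a sum over fields $K$ of discriminant up to $X/q^2$ weighted by local factors. These weights are multiplicative functions of $q$ built from the splitting data, which are $\lambda_K(p)$ up to the trivial character part. So the tail becomes
\[
\sum_{K}\ \sum_{q>Q} \mu(q) f_K(q),
\]
where $f_K(q)$ is essentially a product of $\lambda_K(p)$ over $p\mid q$. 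The task is to bound $\sum_{q\le Y}\mu(q)\lambda_K(q)\cdots$ nontrivially, uniformly in $K$.

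The quasi-Riemann hypothesis is used at exactly this step. The Dirichlet series of $\mu(q)\lambda_K(q)$ is essentially $1/L(s,\rho_K)$ times an absolutely convergent factor in $\Re s>1/2$. Since $L(s,\rho_K)=\zeta_K(s)/\zeta(s)$, any zero of $L(s,\rho_K)$ is a zero of $\zeta_K$, and the hypothesis makes $1/L(s,\rho_K)$ holomorphic for $\Re s>1-\tau$. Standard convexity bounds for $\log$ of $L$ in a zero-free region, via Borel--Carathéodory, give $1/L(s,\rho_K)\ll (|D_K|(|t|+2))^{\epsilon}$ for $\Re s\ge 1-\tau+\epsilon$. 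Perron's formula then yields
\[
\sum_{q\le Y}\mu(q)\lambda_K(q)\ll_\epsilon Y^{1-\tau+\epsilon}|D_K|^{\epsilon},
\]
uniformly in $K$. This replaces the Thorner--Zaman zero-density input, which only gave a saving for most $K$ with a loss governed by $B$. The quadratic fields enter through reducible, or non-$S_3$, forms and the quadratic resolvent, where the same argument applies to $\zeta_F$. Summing over $K$ with $|D_K|\le X/q^2$ by dyadic decomposition in $q$ and partial summation, the tail is bounded by roughly $X\,Q^{-1-\tau}\cdot(\text{dyadic factor})$, with the precise shape coming from the number of fields with discriminant up to $X/q^2$. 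Balancing this against $X^{2/3}Q^{\beta}$ and optimising $\alpha$ gives the exponent $2/3-c(\tau)$ with $c(\tau)=\tau/(18+15\tau)$. As a consistency check, this formula should reduce to the bookkeeping of Theorem \ref{uncondsharpctthm}, where $c_B=1/(288B+15)$ matches $\tau\leftrightarrow 1/(16B)$ roughly.

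For the version with splitting conditions $\Sigma$, we impose the local conditions at $p\mid P_\Sigma$ through local densities in the small-$q$ range. This costs the factors $p^{2/3+2c}$, coming from the $q$-aliasing in the lattice count, and $p^{3c-1}$ for totally ramified primes, as in Theorem \ref{splitcondthmQ}. In the tail the characters $\lambda_K$ are twisted only at finitely many primes, which changes $1/L$ by a finite Euler product. Because the quasi-RH bound is uniform in $K$, the condition $P_{\Sigma,(1^3)}\le X^{1/100}$ is no longer needed. The main obstacle is the uniformity of the Perron/convexity bound in the conductor of $K$ and its interaction with the dyadic sum over fields. We must ensure that the $|D_K|^\epsilon$ losses and the truncation height $T$ in Perron's formula do not spoil the exponent. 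To handle this, we will take $T$ a small power of $X$ and use Cauchy-type bounds on $\log L$ in the strip $\Re s\ge 1-\tau+\epsilon$.
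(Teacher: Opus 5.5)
You have the paper's architecture: the maximality sieve, a split at $Q$, and the Thorner--Zaman input replaced by the uniform bound $\sum_{a\le A}\mu(a)\lambda_K(a)\ll A^{1-\tau+\epsilon}\abs{\mathrm{Disc}(K)}^{\epsilon}$ from Lemma \ref{uncondpsavelemma}, which gives a large-range error $X^{1+\epsilon}Q^{-1-\tau}$. However, the input on the other side is wrong, so the claimed exponent does not follow from what you wrote.

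You take the small-range error to be of shape $X^{2/3+\epsilon}Q^{\beta}$. With that, no choice of $Q$ can beat $X^{2/3}$, so your final balancing is asserted rather than derived. Your consistency check with $c_B$ only shows that the target formula is the paper's, not that your inputs produce it.

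What makes the improvement possible is that the small range is already well below $X^{2/3}$. After the discriminant-reducing identity \eqref{discredsum}, \cite[Theorem 3.1]{BTT} (Landau's method for the $\eta_{ab}$-weighted Shintani zeta functions) counts each term with error $X^{3/5+\epsilon}(ab)^{2/5}(a^2b^4c^4)^{-3/5}$, as in \eqref{BTTapplication}. These errors sum to $X^{3/5+\epsilon}Q^{1/5}$, as in \eqref{sharpbadsmallerror}. The old exponent $2/3$ came only from the trivial tail bound $X/Q$. Balancing as in \eqref{quasibalance}, the equation $X^{3/5}Q^{1/5}=XQ^{-1-\tau}$ gives $Q=X^{2/(6+5\tau)}$ and a total error $X^{(4+3\tau)/(6+5\tau)}=X^{2/3-\tau/(18+15\tau)}$.

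Relatedly, your tail $\sum_K\sum_{q>Q}\mu(q)f_K(q)$ with $f_K$ multiplicative is not what fibering orders over fields gives. The number of orders of $K$ non-maximal at $q$ with discriminant at most $X$ is a truncated sum over indices divisible by $q$, not a multiplicative function of $q$. The paper instead applies the pointwise bound to every form $x$ produced by \eqref{discredsum}, using $\eta'_p=\lambda_p+p\mathbf{1}_{(0),p}$ with the constraint $a^2\le X/\abs{\mathrm{Disc}(x)}$. It treats reducible forms through Lemma \ref{redshintlemma}, whose residue supplies the quadratic main term.

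The same issue affects your treatment of $\Sigma$. The factors $E_4(\Sigma_p)$ are not local costs of the lattice count; they come out of re-optimising $Q$ against the $p$-dependence in both ranges.

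In the small range the cost is $X^{3/5}Q^{1/5}p^{4/5}$ for unramified types and $X^{3/5}Q^{1/5}p^{-4/5}$ for $(1^3)$. This uses Mori's formulas for the Fourier transforms and, for the ramified types, the modified identities \eqref{totramdiscred} and \eqref{partramdiscred}.

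In the large range the bound is $XQ^{-1-\tau}$ for unramified types, but for $(1^3)$ it must be $XQ^{-1-\tau}p^{-2}$, reflecting the density $p^{-2}$ of forms totally ramified at $p$. Your remark that twisting at finitely many primes only changes $1/L$ by a finite Euler product yields no such saving. Without the factor $p^{-2}$ you would obtain $p^{-4(1+\tau)/(6+5\tau)}$ rather than the stated $p^{3c(\tau)-1}=p^{-(6+4\tau)/(6+5\tau)}$. You are right that uniformity of the quasi-Riemann hypothesis removes the restriction $P_{\Sigma,(1^3)}\le X^{1/100}$.
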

\begin{remark}
    The choice $\tau=1/32$ from \cite{quasidede} yields 
    \begin{equation}
        c(1/32) = \frac{1}{591}.
    \end{equation}
    The choice $\tau = 1/2$, corresponding to the GRH, instead yields 
    \begin{equation*}
        c(1/2) = 1/51,
    \end{equation*}
    which gives the same error term as in the conditional asymptotic for the sharp counting function in Theorem \ref{splitcondthmQ}.
\end{remark}

\subsection{Non-vanishing and the one-level density}

As previously mentioned, the improvements in the various error terms above under the GRH are due to the fact that we have good control over the real parts of all zeros of $L(s,\rho_K)$. A related problem is studying the imaginary parts of these zeros, i.e. how the zeros distribute along the line $\Re(s)=1/2$. The methods used to prove the above theorems are all extensions of methods that we use to study the distribution of zeros which lie close to the real axis, so-called low-lying zeros, of the family
\begin{equation*}
    \mathcal{L}^\pm(X) = \{L(s,\rho_K): K\in \mathcal{F}^\pm(X)\}.
\end{equation*}
To be more precise, this distribution may be studied using the one-level density
\begin{equation*}
    \frac{1}{N^\pm(X)}\sum_{0 < \pm \mathrm{Disc}(K) \leq X}\sum_{\gamma_K}\phi\left(\frac{\log X}{2\pi }\gamma_K\right),
\end{equation*}
where $\gamma_K$ ranges over the ordinates of the zeros of $L(s,\rho_K)$. The function $\phi$ is a Schwartz function whose Fourier transform is supported on the interval $[-\sigma, \sigma]$ for a finite $\sigma < \infty$. By the rapid decay of $\phi$, the one-level density is essentially a summation over ordinates which are close to zero, i.e. which lie close to the central point $1/2$, if one assumes the GRH.

The Katz--Sarnak conjecture \cite{Katz-Sarnak} states that as $X\to \infty$, the one-level density of a natural family of $L$-functions tends to the limit
\begin{equation*}
    \int_{-\infty}^\infty \phi(u)W_G(u)du,
\end{equation*}
where $W_G(u)$ is one of five density functions, depending on the so-called symmetry type $G$ of the family in question. The Katz--Sarnak conjecture has been verified for several different families, if one assumes that the Fourier transform of $\phi$ is compactly supported as above, with $\sigma$ restricted in size; see e.g. \cite{ Hughes-Rudnick, ILS, Ozluk-Snyder, Waxman, Young1} for results concerning a wide variety of families, and see e.g. \cite{Ahlquist1, Bucuretal, CK, Durlanik, Meisner, SSTEqui} for results concerning families associated with field extensions. By proving the Katz--Sarnak conjecture for large enough $\sigma$, one is able to obtain non-vanishing results for the underlying family, under the GRH.  

The one-level density of the family $\mathcal{L}^\pm(X)$ was first studied by Yang \cite{Yang}, who obtained an admissible support of $\sigma < 1/50$. Different aspects of this family were further studied by Cho--Kim \cite{CK}, Shankar--Södergren--Templier \cite{SSTEqui}, and  Cho--Fiorilli--Lee--Södergren \cite{CFLS}. The quality of one-level density results for this family is closely tied to averages of the Artin characters $\lambda_K$ over primes. One way to study such averages is by using asymptotic formulae for the counting functions $N^\pm(\Sigma;X)$. The previous best results \cite[Theorem 1.3]{BTT} for the error term in this counting function yields an unconditional admissible support $\sigma < 2/7$, as remarked in \cite{CFLS}. By restricting to a subfamily and counting with a smooth weight function, Shankar--Södergren--Templier \cite[Theorem 5]{SST} managed to improve this and obtain a support $\sigma < 2/5$ by studying the average of characters more directly.

In some families, by assuming a relevant form of the GRH, one is able to improve the admissible support in the one-level density and thus also obtain better non-vanishing results. However, for the family of non-Galois cubic fields, there has up to this point been no known method for utilising the GRH in a way that improves upon the unconditional results. Our next theorem conditionally establishes the Katz--Sarnak conjecture for the family $\mathcal{L}^\pm(X)$, when $\sigma < 1$.
\begin{theorem}\label{onelevQ}
Assume the GRH for Dedekind zeta functions associated with quadratic and cubic fields. Then, the Katz--Sarnak conjecture for the family $\mathcal{L}^\pm(X)$ holds for $\sigma < 1$. More precisely, for test functions $\phi$ whose Fourier transform is supported in $(-1,1)$, the one-level density of $\mathcal{L}^\pm(X)$ converges to the expected symplectic density
    \begin{equation*}
        \widehat{\phi}(0)-\frac{\phi(0)}{2},
    \end{equation*}
    as $X\to \infty$.
\end{theorem}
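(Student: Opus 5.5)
The plan is the standard explicit-formula route. Everything reduces to bounding averages of the Artin characters $\lambda_K(p^k)$ over the family, with error terms strong enough to reach $\sigma<1$. First apply the Weil explicit formula to $L(s,\rho_K)=\zeta_K(s)/\zeta(s)$, whose conductor is $|\mathrm{Disc}(K)|$. For $\phi$ with $\widehat\phi$ supported in $(-\sigma,\sigma)$ this gives
\begin{equation*}
\sum_{\gamma_K}\phi\left(\frac{\log X}{2\pi}\gamma_K\right)=\widehat\phi(0)\frac{\log|\mathrm{Disc}(K)|}{\log X}-\frac{2}{\log X}\sum_{p,k}\frac{\lambda_K(p^k)\log p}{p^{k/2}}\widehat\phi\left(\frac{k\log p}{\log X}\right)+O\left(\frac{1}{\log X}\right).
\end{equation*}
Averaging over $\mathcal{F}^\pm(X)$, the first term tends to $\widehat\phi(0)$, and the terms with $k\ge 3$ are negligible. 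The $k=2$ terms contribute $-\phi(0)/2$. To see this, note that $\lambda_K(p^2)=\lambda_K(p)^2-\lambda_K(p)-1$ on average equals $-1+o(1)$ by the local densities of Davenport--Heilbronn, via Theorem \ref{splitcondthmQ} with small $p$. Summing $-2\sum_p \frac{\log p}{p}\widehat\phi(2\log p/\log X)/\log X$ with the prime number theorem gives $-\tfrac12\int\widehat\phi=-\phi(0)/2$, up to the usual sign bookkeeping. This second-order piece needs only $\sigma$ small relative to the $p^2$ range, and uniform splitting asymptotics suffice, so it is routine.

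The main term to control is the $k=1$ sum
\begin{equation*}
S(X)=\frac{1}{N^\pm(X)\log X}\sum_{p\le X^{\sigma}}\frac{\log p}{\sqrt p}\widehat\phi\left(\frac{\log p}{\log X}\right)\sum_{K\in\mathcal{F}^\pm(X)}\lambda_K(p),
\end{equation*}
which we need to be $o(1)$ for all $\sigma<1$. The average of $\lambda_K(p)$ over fields has main term of size $X/p$, coming from the $C_1^\pm(\Sigma)$ densities, since $\lambda_K(p)$ has mean $O(1/p)$. Its contribution is $\sum_p p^{-3/2}\log p/\log X=o(1)$. For the error, inserting Theorem \ref{splitcondthmQ} directly with $E_2(\Sigma_p)=p^{12/17}$ only gives support well below $1$. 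So I would instead work with a smooth weight $h$ in the discriminant, which is harmless for the one-level density after a standard sandwiching. I would then use the GRH-based smooth input underlying the $N^\pm(h,\Sigma;X)$ bound, whose error is $p\,X^{1/2+\epsilon}$. Even that gives $\sum_{p\le X^\sigma} p^{1/2}X^{1/2}/X$, i.e.\ only $\sigma<1/3$, so it is not enough to plug in the counting theorem prime by prime.

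The key step, and the main obstacle, is therefore to estimate the \emph{sum over $p$} of the character averages jointly rather than prime by prime. I would go back to the discriminant-reducing sieve of Bhargava--Shankar--Tsimerman: write the count of maximal cubic rings as a Möbius-type sum over $q$ of counts of binary cubic forms nonmaximal at $q$, and twist by $\lambda_K(p)$. For small sieve moduli $q$, the lattice-point count of binary cubic forms with prescribed reduction mod $pq$, evaluated via Shintani zeta functions or Poisson summation, gives main terms plus dual sums whose size is about $X^{1/2}$ times a local Fourier transform at $pq$. These dual sums exhibit cancellation in $p$ of the type $\sum_p \lambda(p)$. For large $q$, one interchanges order and uses that $\sum_{p\le Y}\lambda_K(p)\log p\ll |\mathrm{Disc}(K)|^\epsilon Y^{1/2+\epsilon}$ under GRH for $\zeta_K$, which is the genuine use of GRH. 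Balancing the two ranges at $q\approx X^{\delta}$ should leave a total error $O(X^{1-\eta})$ for $p\le X^{\sigma}$, $\sigma<1$. This would make $S(X)=o(1)$, and combining the three pieces gives $\widehat\phi(0)-\phi(0)/2$. The delicate point I expect is handling the large-$q$ range, where the non-maximal rings have discriminant $X/q^2$. There one must count fields with weight $\lambda_K(p)$ uniformly, using GRH to keep the contribution at $X^{1/2}$-scale per dyadic block.
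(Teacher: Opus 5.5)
Your outer skeleton matches the paper: the explicit formula, the $k\ge 3$ terms being negligible, the $k=2$ term from uniform splitting asymptotics, and the maximality sieve. The same goes for the large sieve moduli $n>X^{\delta}$, where a pointwise GRH bound $\sum_{p}\frac{\log p}{\sqrt p}\widehat\phi(\frac{\log p}{L})\theta_{K_f}(p)\ll X^{\epsilon}$ is combined with $\#\mathcal{F}^{\pm}(W_n;X)\ll X/n^{2-\epsilon}$ (Lemma~\ref{lemmaGRH1}). That range is the easy part, not the delicate one.

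The gap is in the small-modulus range. There you simply assert that the dual sums ``exhibit cancellation in $p$ of the type $\sum_p\lambda(p)$'', but no such cancellation is visible. By Mori's formula \eqref{thetapFouriertransf}, the dominant part of $\widehat{\theta_p}$ is $p^{-1}\mathbf{1}_{p,(1^3)}$. So after the functional equation the dual side is essentially a nonnegative Dirichlet series counting dual forms totally ramified at $p$. It has no sign change in $p$, and the trivial bound $\ll t/p^{2}$ for such forms gives only $\sigma<2/3$.

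The missing idea is how to make this count oscillate, which the paper does in two steps:
\begin{itemize}
\item It passes to maximal overrings via an explicit local count of suborders of type $(1^3)$ (Lemma~\ref{totramsubordct}, plus the separate $3$-adic Lemma~\ref{3adiclemma}).
\item It fibres the cubic fields totally ramified at $p$ over their quadratic resolvents using \cite[Theorem 11]{ShankarThorne}. This writes the series as a part depending on $p$ essentially only through $p^{-2s}$, plus terms carrying $\chi_{-3d}(p)$ or $\theta_E(p)$ for auxiliary cubic fields $E\in\mathcal{L}_3(rP_{(111)},d)$.
\end{itemize}
Only then does GRH (for $\zeta_E$ and for quadratic fields) give cancellation over $p$. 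One must still check that the $p$-independent part produces a residue at $s=1$ that cancels the one picked up earlier.

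A second omission is the reducible contribution. Any Shintani or Poisson-summation count includes reducible forms. On forms attached to $\mathcal{O}_L\times\mathbb{Z}$, with $L$ quadratic of discriminant $d$, one has $\theta_p=1+\chi_d(p)$ for $p\nmid d$, whose mean is $1$ rather than $O(1/p)$. Accordingly, the residue at $s=1$ of $\xi^{\pm}(s,\theta_p\otimes\Psi_n)$ is of size roughly $n^{-2}$ for every $p$; it comes entirely from reducible forms, since $\widehat{\theta_p}(0)=0$. After the normalisation in \eqref{bound2win} this is of order $X^{\sigma/2}$ up to logarithms, so it must cancel exactly. The paper isolates the quadratic-ring contribution through the suborder zeta functions \eqref{suborderctdw} and GRH for quadratic $L$-functions (Lemma~\ref{grhquadlemma}, Proposition~\ref{redcountprop}), and shows that its main term cancels that residue. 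This is where the quadratic GRH hypothesis enters, and your plan never uses it.

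Two smaller corrections:
\begin{itemize}
\item The $p^2$-coefficient in the explicit formula is the coefficient $a_K(p^2)$ of $-L'/L$, not $\lambda_K(p^2)$. Its family average is $+1$: from the Davenport--Heilbronn densities, $2\cdot\frac16+2\cdot\frac12-\frac13=1$. That sign is exactly what produces $-\phi(0)/2$, so it is not mere bookkeeping.
\item Passing from a smooth to the sharp cutoff is not a sandwiching argument, because the summand has no sign. It can be done with the per-field GRH bound, as in Lemma~\ref{GRHlemma2}.
\end{itemize}
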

One can sometimes obtain conditional non-vanishing results by establishing the Katz--Sarnak conjecture for sufficiently large support, and the strength of the non-vanishing result is then directly dependent on the size of the admissible support. For families of $L$-functions associated with field extensions, not much is known in the way of non-vanishing, as the obtained admissible support is usually too small. The most well-understood family is that of Dedekind zeta functions associated with quadratic fields. This family is closely related to the family of quadratic Dirichlet $L$-functions, and here Özluk and Snyder proved the Katz--Sarnak conjecture with support $\sigma < 2$, and thus obtained a proportion of non-vanishing of $93.75\%$. With some additional optimisation, see e.g. \cite[Appendix A]{ILS}, this number can be improved slightly to approximately $94.27\%$. Results for quadratic Dirichlet $L$-functions over a rational function field $\fq(T)$ of the same quality was obtained by Rudnick \cite{Rudnick}; see also \cite{BF}. Very recently, Koymans--Pagano--Shusterman improved the proportion of non-vanishing to $100\%$ over function fields \cite{Chowlafcnfield}. The best unconditional result over $\mathbb{Q}$ is a proportion $87.5\%$ of non-vanishing, proven by Soundararajan \cite{S}.

Assuming the GRH, any bound larger than $2/5$ for the support in the one-level density of the family $\mathcal{L}^\pm(X)$ would immediately imply the non-vanishing of a positive proportion of these $L$-functions, see e.g. \cite{ILS}. Now, in \cite{SST} it is unconditionally proven that, for large enough $X$, at least $X^{0.67}$ of the $L$-functions $L(s,\rho_K)$, associated with the cubic fields whose absolute discriminants are bounded by $X$ are non-vanishing at the central point. Moreover, under the GRH, this number is, up to a constant depending on $\epsilon$, improved to $X^{1-\epsilon}$ for any $\epsilon > 0$. Either of these results imply non-vanishing for an infinite, yet possibly vanishing proportion of the $L$-functions in question. By instead applying our Theorem \ref{onelevQ}, standard methods \cite[Eqs. (1.40), (1.46)]{ILS} yield the following conditional extension of the non-vanishing results of \cite{SST}, establishing a positive proportion of non-vanishing. 
\begin{theorem}\label{nonvanishQ}
    Under the same conditions as in Theorem \ref{onelevQ}, at least $75\%$ of the Artin $L$-functions in $\mathcal{L}^\pm$ are non-vanishing at the central point. Specifically,
    \begin{equation*}
        \liminf_{X\to \infty} \frac{1}{\#\mathcal{F}^\pm(X)}\sum_{K\in \mathcal{F}^{\pm}(X)}\mathbf{1}_{L(1/2,\rho_K) \neq 0}\geq 0.75. 
    \end{equation*}
\end{theorem}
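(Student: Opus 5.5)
Theorem \ref{nonvanishQ} will follow from Theorem \ref{onelevQ} by the standard positivity argument of Iwaniec--Luo--Sarnak \cite[Eqs. (1.40), (1.46)]{ILS}; no new arithmetic input is needed. The key fact is that the symplectic symmetry gives a root number $+1$ for every $L(s,\rho_K)$. Indeed, $\rho_K$ is the two-dimensional irreducible representation of $S_3$, which is orthogonal (real-valued character), and $L(s,\rho_K)=\zeta_K(s)/\zeta(s)$ has root number $1$. Consequently the order of vanishing $m_K := \mathrm{ord}_{s=1/2}L(s,\rho_K)$ is even. We conclude that $m_K\geq 2$ whenever $L(1/2,\rho_K)=0$.

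\textbf{Choice of test function.} Fix $\sigma<1$ and choose an even Schwartz function $\phi\geq 0$ on $\mathbb{R}$ with $\phi(0)>0$ and $\widehat{\phi}$ supported in $[-\sigma,\sigma]$. Under the GRH all zeros have $\Re(s)=1/2$, so every term $\phi\left(\frac{\log X}{2\pi}\gamma_K\right)$ is nonnegative. Hence the one-level density sum for each $K$ is at least $m_K\phi(0)$. Theorem \ref{onelevQ} then gives
\begin{equation*}
\limsup_{X\to\infty}\frac{1}{N^\pm(X)}\sum_{K\in\mathcal{F}^\pm(X)} m_K \leq \frac{1}{\phi(0)}\left(\widehat{\phi}(0)-\frac{\phi(0)}{2}\right).
\end{equation*}
Since $m_K\geq 2\cdot \mathbf{1}_{L(1/2,\rho_K)=0}$, the proportion of vanishing is at most $\frac{1}{2}\left(\widehat{\phi}(0)/\phi(0)-\frac12\right)$.

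\textbf{Optimisation.} It remains to minimise $\widehat{\phi}(0)/\phi(0)$ over admissible $\phi$. The optimal choice is the Fejér-type kernel, with $\widehat{\phi}=\widehat{g}*\widehat{g}$ where $\widehat{g}=\mathbf{1}_{[-\sigma/2,\sigma/2]}$. This gives
\begin{equation*}
\phi(x)=\left(\frac{\sin(\pi\sigma x)}{\pi x}\right)^2,\qquad \widehat{\phi}(0)/\phi(0)=1/\sigma.
\end{equation*}
For the symplectic density this is the classical ILS optimum. The vanishing proportion is therefore at most $\frac{1}{2\sigma}-\frac14$, so the non-vanishing proportion is at least $\frac54-\frac1{2\sigma}$. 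Letting $\sigma\to1^-$ yields $3/4$.

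\textbf{Main obstacle.} The only delicate points are checking that Theorem \ref{onelevQ} applies to this $\phi$ and confirming the root number. For the first, $\widehat{\phi}$ is supported in $[-\sigma,\sigma]\subset(-1,1)$, and $\phi$ need not be Schwartz in the strict sense, only of sufficient decay, which an approximation argument handles. For the second, the functional equation of $\zeta_K/\zeta$ has sign $+1$ since both $\zeta_K$ and $\zeta$ do, so $m_K$ is even. The passage from the limit to the $\liminf$ of proportions is immediate.
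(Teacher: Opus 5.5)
Your proposal is correct and is the standard Iwaniec--Luo--Sarnak positivity argument that the paper invokes through \cite[Eqs. (1.40), (1.46)]{ILS}: the vanishing order $\mathrm{ord}_{s=1/2}L(s,\rho_K)$ is even, a nonnegative Fej\'er-type $\phi$ gives $\widehat{\phi}(0)/\phi(0)\to 1/\sigma$, and Theorem \ref{onelevQ} with $\sigma\to 1^-$ yields the bound $\frac54-\frac{1}{2\sigma}\to\frac34$. Two small points. A real-valued character does not by itself force root number $+1$, so rely on your $\zeta_K/\zeta$ argument for evenness. The Schwartz requirement is handled most cleanly by taking $\phi=g^2$ with $\widehat{g}$ smooth, even, supported in $[-\sigma/2,\sigma/2]$ and close to $\mathbf{1}_{[-\sigma/2,\sigma/2]}$; this is Schwartz, and $\widehat{\phi}(0)/\phi(0)=\int\widehat{g}^2/(\int\widehat{g})^2$ can be made arbitrarily close to $1/\sigma$.
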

\begin{remark}
    As the Dedekind zeta function of an $S_3$-cubic field factors as $\zeta_K(s)=\zeta(s)L(s,\rho_K)$, we have the same lower bound for the proportion of non-vanishing for cubic Dedekind zeta functions.
\end{remark}

\subsection{Results over rational function fields}
Let $q$ be a fixed prime power coprime to $2$ and $3$ and consider the rational function field $\fq(T)$. The fields $\mathbb{Q}$ and $\fq(T)$ are both global fields. Moreover, they are in a sense the simplest such fields and they share many similarities. In particular, one may consider the set of $S_3$-cubic extensions of $\fq(T)$. Such an extension $K$ necessarily satisfies $\abs{\mathrm{Disc}(K)}=q^{2n}$, for some $n\geq 1$. We define, for $X=q^{2n}$, the collection
\begin{equation*}
    \mathcal{F}^{S_\infty}(X) = \{K: \text{$K$ $S_3$-cubic},\,\, \abs{\mathrm{Disc}(K)} = X,\,\, P_\infty \text{ is of type $S_\infty$} \}.
\end{equation*} 
Here, $S_\infty\in \{(111),(21),(3),(1^21),(1^3)\}$ is one of the five splitting types, and $P_\infty$ is the so-called prime at infinity. This non-Archimedean prime has a similar relation to the inclusion $\mathbb{F}_q[T]\subseteq \fq(T)$, as the infinite prime over $\mathbb{Q}$ has to the inclusion $\mathbb{Z}\subseteq \mathbb{Q}$. In particular, specifying the splitting behaviour of $P_\infty$ in a cubic extension of $\fq(T)$ is analogous to fixing the sign of the discriminant $\mathrm{Disc}(K)$ for a cubic extension $K$ of $\mathbb{Q}$.

The counting function of cubic extensions of $\fq(T)$ was first studied by Datskovsky--Wright \cite{DWcubic}, who isolated a main term of size $X$, similar to the number-field case. See also the recent work of Bhargava--Shankar--Wang for a different approach \cite{BSW}. In the function field context, the secondary term was first isolated by Zhao \cite{Zhao} using algebro-geometric methods, with a claimed\footnote{See \cite[Section 1.4]{Kural} for an overview of a gap in Zhao's thesis.} bound for the error term of order $o\big(X^{5/6}\big)$.

Recently, the existence of the secondary term was proven independently by Kural \cite{Kural}, with an error term of quality $\mathcal{O}\left(X^{3/4+\epsilon}\right)$, and by the author \cite{Ahlquist2}, with an error term $\mathcal{O}\left(X^{2/3+\epsilon}\right)$. Now, the arguments used to prove Theorem \ref{splitcondthmQ} generalise to the function field case and improve the argument from \cite{Ahlquist2}. More precisely, let
\begin{equation*}
    N^{S_\infty}(\Sigma;X)=\# \{K: \text{$K$ $S_3$-cubic}, \,\, \abs{\mathrm{Disc}(K)} = X,\,\, P_\infty \text{ of type $S_\infty$},\,\,\, K \text{ of type $\Sigma$} \},
\end{equation*}
where $\Sigma=(\Sigma_P)_P$ is a collection of splitting type specifications at finitely many prime polynomials $P\in \fq[T]$. Then, we have the following theorem.
\begin{theorem}\label{fcnfieldct}
    Let $X=q^{2n}$, with $n\geq 1$. Then\footnote{Note that $C_2^{S_\infty}(\Sigma,n)$ depends on $n\nsmod{3}$, cf. \cite[Theorem 1.1]{Ahlquist2}.},
    \begin{equation*}
        N^{S_\infty}(\Sigma;X) = C_1^{S_\infty}(\Sigma)X+C_2^{S_\infty}(\Sigma,n)X^{5/6} + E(\Sigma)\mathcal{O}_{\epsilon,q}\left(X^{1/2+\epsilon}\right),
    \end{equation*}
    where $C_1^{S_\infty}(\Sigma)$ and $C_2^{S_\infty}(\Sigma,n)$ are as in \cite[Theorem 7.2]{Ahlquist2}. Moreover,
    \begin{equation*}
        E(\Sigma)=\prod_PE(\Sigma_P),
    \end{equation*}
    with
    \begin{equation*}
        E(\Sigma_P)=\begin{cases}
            \abs{P}, &\text{ if $\Sigma_P\in \{(111),(21),(3), (1^21) \}$,}\\
            1, &\text{ if $\Sigma_P=(1^3)$,}
        \end{cases}
    \end{equation*}
    where $\abs{P}=q^{\deg P}$.
\end{theorem}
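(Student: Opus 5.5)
We follow the strategy of \cite{Ahlquist2}, replacing its treatment of the discriminant-reducing sieve by the character-average argument behind Theorem \ref{splitcondthmQ}. By the Davenport--Heilbronn correspondence over $\fq[T]$ (the Delone--Faddeev/Gan--Gross--Savin parametrisation), $S_3$-cubic extensions with a given type at $P_\infty$ correspond to $\GLtwo(\fq[T])$-orbits of irreducible binary cubic forms that are maximal at every prime $P$. Maximality is imposed by inclusion--exclusion over squarefree moduli $Q$ in $\fq[T]$: one sums $\mu(Q)$ times the number of forms that are nonmaximal at every $P\mid Q$. The splitting conditions in $\Sigma$ are imposed as local conditions on the forms modulo $P$ or $P^2$.

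The counting of forms nonmaximal at all $P\mid Q$ splits by the size of $\abs{Q}$. For small $\abs{Q}$ (up to a parameter $Y$) we use the exact Shintani-zeta-function / orbit-count asymptotics over $\fq[T]$ from \cite{Ahlquist2}. These give the main term $C_1^{S_\infty}(\Sigma)X$ and the secondary term $C_2^{S_\infty}(\Sigma,n)X^{5/6}$. They also produce an error term that is essentially $\abs{Q}$ times a square-root quantity; the $n \nsmod{3}$ dependence comes from the poles of the Shintani zeta function. For large $\abs{Q}$ we use the Bhargava--Shankar--Tsimerman discriminant-reducing map, as in the proof of Theorem \ref{splitcondthmQ}. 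A form nonmaximal at $P$ corresponds to a form of discriminant divided by $\abs{P}^2$ together with a local datum, namely an $\fq$-rational root modulo $P$. Hence the tail becomes a sum over cubic rings $R$ of smaller discriminant, weighted by the number of roots modulo $Q$, which is $\prod_{P\mid Q}(1+\lambda_K(P))$, with $\lambda_K$ the Artin character of $\zeta_K(s)/\zeta(s)$. Expanding the product, the trivial part of this sum is absorbed into the main terms, and the remaining part is a weighted average of $\lambda_K(D)$ over divisors $D$ of $Q$.

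The key input is that over $\fq(T)$ the Riemann hypothesis for $L(s,\rho_K)$ is Weil's theorem, so everything is unconditional. The function $L(u,\rho_K)$ is a polynomial in $u=q^{-s}$ of degree determined by $\deg\mathrm{Disc}(K)$, with all inverse roots of absolute value $q^{1/2}$. This gives
\[
\sum_{\deg D = m}\mu^2(D)\lambda_K(D)\ll_\epsilon q^{m/2}\abs{\mathrm{Disc}(K)}^{\epsilon}
\]
and similar bounds for twists by the local conditions in $\Sigma$. These bounds also hold for the corresponding reducible and quadratic contributions, where we use the RH for quadratic Dirichlet $L$-functions over $\fq(T)$. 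Summing over $K$ with discriminant $X/\abs{Q}^2$ and over $Q$ in the large range, the tail contributes
\[
\ll \sum_{\abs{Q}>Y}\frac{X}{\abs{Q}^2}\cdot \abs{Q}^{1/2+\epsilon},
\]
since the character average saves $\abs{Q}^{1/2}$ over the trivial bound. The small-$Q$ error contributes roughly $\sum_{\abs{Q}\le Y}\abs{Q}X^{1/2+\epsilon}$. In the function-field setting the smooth and sharp counts coincide, because discriminants are powers of $q$ and one extracts a single coefficient of a rational generating function. This removes the loss that prevents the sharp count from reaching square-root error over $\mathbb{Q}$. We therefore expect the two ranges to balance, with the decomposition arranged via generating functions in $u$ so that each $Q$-level contributes $X^{1/2+\epsilon}$ up to a summable factor. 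This gives the final error $E(\Sigma)X^{1/2+\epsilon}$. The factor $\abs{P}$ for unramified and partially ramified conditions comes from the conductor of the local indicator functions (forms modulo $P$). Totally ramified conditions instead correspond to a discriminant-reducing step at $P$ itself, so they cost no factor.

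\textbf{Main obstacle.} The hardest step is making the tail estimate uniform. We must control the averages of $\lambda_K$ over all cubic rings, including nonmaximal rings and reducible forms, with only $\abs{\mathrm{Disc}}^\epsilon$ dependence on the conductor. We must also handle the multiple levels of discriminant reduction: a ring can be nonmaximal at $P$ to higher order, which requires iterating the sieve. Our plan is to write the full count as a Dirichlet series in $u$ that factors through $\sum_K L(u,\rho_K)^{-1}$-type expressions. We then bound its coefficients using Weil's bound on the circle $\abs{u}=q^{-1/2-\epsilon}$, which lets us pick off the coefficient at $u^{n}$ exactly.
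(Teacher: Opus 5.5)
\textbf{The gap is in the small-modulus range.} There you take the asymptotics of \cite{Ahlquist2} with an error of size $\abs{Q}X^{1/2+\epsilon}$ for each modulus $Q$. Summed over $\abs{Q}\le Y$ this is $\gg Y^{2}X^{1/2}$. No large-range bound can balance against that to give $X^{1/2+\epsilon}$; with the large-range bound corrected as below, the optimum is about $X^{11/14}$. Your sentence ``we therefore expect the two ranges to balance'' is exactly where the proof is missing. Moreover, \cite{Ahlquist2} does not even supply that input: it keeps only the degenerate dual forms $y=(0,0,0,y_4)$ and is left with an error of size $X^{2/3+\epsilon}$.

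Your large-range estimate is also misstated. The bound $\sum_{\abs{Q}>Y}X\abs{Q}^{-2}\abs{Q}^{1/2+\epsilon}\asymp XY^{-1/2}$ is worse than the trivial $X/Y$. The saving comes from cancellation in the sum over the modulus itself, so a dyadic block $\abs{Q}\sim A$ contributes $(X/A^2)A^{1/2}$ in total. The paper weights the cutoff as $\abs{f_1g_1^2\ell_2h}\abs{\ell_1}^{2/3}\le Q$, so that the character-free parts of the modulus are also controlled, and obtains $X^{1+\epsilon}/Q^{3/2}$.

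In the small range the paper applies the finite Fourier transform within the geometry-of-numbers setup of \cite{Ahlquist2}, with two new inputs.
\begin{enumerate}
\item All degenerate dual forms are evaluated exactly, using the orbit representatives $(0,0,0,f)$ and $(0,0,f',f)$ and unfolding. This gives exactly the reducible main term and the secondary term, with no error.
\item Since $\widehat{\eta'_P}=\abs{P}^{-1}(\mathbf{1}_{P,(0)}+\mathbf{1}_{P,(1^3)})$, the nondegenerate dual forms are of type $(1^3)$ modulo part of the modulus. Bounded termwise they contribute roughly $Q^2$, which only balances to about $X^{4/7}$. To do better, one passes to maximal overrings (Lemma \ref{totramsubordct}) and applies a function-field analogue of \cite[Theorem 11]{ShankarThorne}. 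This writes the count of cubic fields totally ramified at $e$ in terms of quadratic fields $K_D$ and cubic fields $E\in\mathcal{L}_3$, weighted by $\chi_{-3D}(e)$ and $\theta_E(e)$. The character-free part is computed explicitly and cancels. For the remaining parts, Weil's theorem gives square-root cancellation in the sum over the modulus.
\end{enumerate}
The result is a bound $Q^{3/2+\epsilon}$, which balances against $X^{1+\epsilon}/Q^{3/2}$ at $Q=X^{1/3}$. The factors $E(\Sigma_P)$ also come out of this balance, via the size of the Fourier transform of $\mathbf{1}_{P,\Sigma_P}$ in the dual sum; your conductor heuristic does not determine them.
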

\begin{remark}
    As in Theorem \ref{splitcondthmQ}, the error term associated with the partially ramified splitting type may be averaged for free.
\end{remark}
The reader may note that the unconditional Theorem \ref{fcnfieldct} is of similar strength as the estimates from Theorem \ref{splitcondthmQ} for the smooth counting function, which are conditional on the  GRH. The reason for this is twofold. First, the GRH is a theorem over function fields, proven by Weil. Second, the fact that the possible sizes of discriminants over $\fq(T)$ are $q$-powers means that counting problems over $\fq(T)$ are essentially smooth counting problems.

We also study the Artin $L$-functions $L(s,\rho_K)$ associated with cubic function field extensions $K$. Here, we have the following unconditional results, generalising the analogous results over $\mathbb{Q}$ and improving upon \cite[Theorem 1.4]{Bucuretal} and the computations of \cite[Section 6]{Ahlquist2}.
\begin{theorem}\label{onelevfq}
    The Katz--Sarnak conjecture for Artin $L$-functions $L(s,\rho_K)$ associated with $\mathcal{F}^{S_\infty}$ holds for $\sigma < 1$. More precisely, for test functions $\phi$ whose Fourier transform is supported in $(-1,1)$, the one-level density converges to the expected symplectic density
    \begin{equation*}
        \widehat{\phi}(0)-\frac{\phi(0)}{2},
    \end{equation*}
    as $X=q^{2n}\to \infty$.
\end{theorem}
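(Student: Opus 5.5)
The plan follows the standard route of Katz--Sarnak computations for families of field extensions, with the new input being the estimate for prime averages of Artin characters that underlies Theorem \ref{fcnfieldct}.

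\textbf{Explicit formula.} Over $\fq(T)$, $L(s,\rho_K)$ is a polynomial in $u=q^{-s}$ of degree $N_K$, where $N_K$ is determined by $n$ and $S_\infty$ ($N_K\approx 2n$ up to a bounded shift). By Weil, all its inverse zeros have absolute value $q^{1/2}$. Write $\phi$ periodised against $2n$ or $N_K$, as in \cite{Bucuretal, Ahlquist2}. The explicit formula expresses the one-level density of $K$ as
\begin{equation*}
\widehat{\phi}(0)
\end{equation*}
plus contributions of the form
\begin{equation*}
-\frac{2}{2n}\sum_{P,\,k}\frac{\lambda_K(P^k)\deg P}{|P|^{k/2}}\,\widehat{\phi}\left(\frac{k\deg P}{2n}\right),
\end{equation*}
plus $O(1/n)$ from the prime at infinity and ramified primes.

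\textbf{Prime squares and higher powers.} The terms with $k=2$ give $\lambda_K(P^2)=\lambda_K(P)^2-\lambda_K(P)$-type expressions. For $S_3$ Frobenius classes the average of $\chi_\rho(\mathrm{Frob}^2)$ equals $-1/2$ over $\mathbb{F}_q$-points, by the local densities in Theorem \ref{fcnfieldct} with a single splitting condition. These terms produce the $-\phi(0)/2$, as in \cite{Ahlquist2}, and the range $k\ge3$ is negligible. Here only the main terms $C_1^{S_\infty}(\Sigma)$ matter. The error, summed over $\deg P\le \sigma n$, is $\sum |P|^{1-1}\cdot X^{-1/2+\epsilon}|P|$, which is fine for $\sigma<1$.

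\textbf{Main obstacle: the $k=1$ terms.} One must show that
\begin{equation*}
\frac{1}{\#\mathcal F^{S_\infty}(X)}\sum_{K}\sum_{\deg P\le \sigma n}\frac{\lambda_K(P)\deg P}{|P|^{1/2}}=o(1).
\end{equation*}
Writing $\sum_K\lambda_K(P)$ through the splitting-type counts of Theorem \ref{fcnfieldct}, the main terms cancel, since $\sum_S C_1(S)\lambda(S)=O(1/|P|)$. The secondary terms $C_2$ give at most about $X^{5/6}|P|^{-1/3}$, and the error term is $O(X^{1/2+\epsilon}|P|)$.

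The naive bound this yields is
\begin{equation*}
X^{-1/2+\epsilon}\sum_{\deg P\le\sigma n}|P|^{1/2}\approx X^{-1/2+3\sigma/4}.
\end{equation*}
This gives only $\sigma<2/3$, and the $E(\Sigma_P)=|P|$ loss in Theorem \ref{fcnfieldct} is therefore too crude. So rather than applying Theorem \ref{fcnfieldct} prime by prime, I would go back to its proof, namely the Bhargava--Shankar--Tsimerman discriminant-reducing sieve expressed via averages of $\lambda_K$. In that setup the sum over $P$ is kept inside, so that one bounds
\begin{equation*}
\sum_K\sum_P \lambda_K(P)\,w(P)
\end{equation*}
with the GRH (Weil) bound
\begin{equation*}
\sum_{\deg P=m}\lambda_K(P)\ll q^{m/2}\,\mathrm{deg}\,\mathrm{Disc}(K)
\end{equation*}
applied to fields $K$ of smaller discriminant that arise in the sieve. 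The secondary term is handled in the same way as in the error analysis.

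The key expected estimate is that the $P$-summed error is $O(X^{1/2+\epsilon}\cdot X^{\sigma/2})$, that is, square-root cancellation in $P$ as well. This is $o(X)$ exactly when $\sigma<1$. I expect verifying this uniform cancellation, particularly for the non-maximal orders removed by the sieve, to be the main difficulty. Combining the three contributions gives the symplectic density $\widehat{\phi}(0)-\phi(0)/2$.
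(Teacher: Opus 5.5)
\paragraph{The gap.} Your treatment of the $k=1$ terms carries the whole theorem, and the tool you invoke for it cannot produce the needed cancellation. Weil's bound for $\sum_{\deg P=m}\lambda_K(P)$ is a statement about one field at a time. For a single field it bounds the normalised prime sum $\frac{1}{n}\sum_P\frac{\deg P}{\abs{P}^{1/2}}\widehat{\phi}\bigl(\frac{\deg P}{2n}\bigr)\theta_K(P)$ only by $O(n)$, whereas you need its \emph{average over the family} to be $o(1)$.

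Per-field bounds therefore help only where the number of forms is already a power smaller than $X$. This is how the paper uses them: to discard the sieve moduli $\abs{F}>X^{\delta}$, for which there are only $\ll X/\abs{F}^{2-\epsilon}$ forms. Likewise, in the proof of Theorem \ref{fcnfieldct}, which you propose to revisit, the GRH input on $\lambda$ handles only the large-modulus range; the small range is done by duality. In the complementary range you are summing over $\asymp X$ forms, and no bound on $\lambda_K(P)$ for the fields attached to them gives $o(1)$. So the step you defer (``verifying this uniform cancellation'') is not a technical check; it is the theorem.

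\paragraph{The missing idea: duality plus reflection.} After truncating the sieve and adding back the reducible forms, the paper applies the finite Fourier transform on $V(R/PR)$, the function-field substitute for the Shintani functional equation.
\begin{itemize}
\item By Mori's formula, the essential part of $\widehat{\theta_P}$ is $\abs{P}^{-1}\mathbf{1}_{P,(1^3)}$. So after unfolding one must count nondegenerate dual forms that are \emph{totally ramified} at $P$. The trivial bound $\ll Y/\abs{P}^2$ for these gives only $\sigma<2/3$.
\item To reach $\sigma<1$, one passes to maximal overrings via Lemma \ref{totramsubordct}. One then applies a function-field version of \cite[Theorem 11]{ShankarThorne}, proved with the methods of \cite{CohenMorra}, fibering over quadratic resolvents.
\item This splits the count into two kinds of terms. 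The part independent of $P$ contributes a multiple of $\abs{P}\int_C\chi_\infty([v^{(i)},v])\,dv$, which vanishes by orthogonality. The remaining parts are weighted by $\theta_E(P)$ for auxiliary cubic fields $E$ and by quadratic characters of the resolvents.
\item Weil's theorem is applied, when summing over $P$, to \emph{these} characters, attached to fields other than the ones being counted.
\item The degenerate dual forms cancel the reducible contribution.
\end{itemize}

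\paragraph{Numerology.} Your count is off by a factor of $2$. Since $L(s,\rho_K)$ has degree $2n+O(1)$ in $q^{-s}$, support $\sigma$ means $\abs{P}<X^{\sigma}$, not $X^{\sigma/2}$. Hence the prime-by-prime use of Theorem \ref{fcnfieldct} gives only $\sigma<1/3$. Even square-root cancellation among the per-prime errors $X^{1/2}\abs{P}$ would give only $\sigma<1/2$. Also, the mean of $a_K(P^2)$ that produces $-\phi(0)/2$ is $+1$, not $-1/2$.
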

Just as over $\mathbb{Q}$, the above theorem suffices to obtain a non-vanishing result.

\begin{theorem}\label{nonvanishfq}
    Let $q$ be as above and let $X=q^{2n}$ for a positive integer $n$. Then, at least $75\%$ of the Artin $L$-functions $L(s,\rho_K)$ associated with the fields in $\mathcal{F}^{S_\infty}$ are non-vanishing at the central point. Specifically,
    \begin{equation*}
        \liminf_{X=q^{2n}\to \infty} \frac{1}{\#\mathcal{F}^{S_\infty}(X)}\sum_{K\in \mathcal{F}^{S_\infty}(X)}\mathbf{1}_{L(1/2,\rho_K) \neq 0}\geq 0.75. 
    \end{equation*}
\end{theorem}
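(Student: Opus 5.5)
The plan is to deduce Theorem \ref{nonvanishfq} from Theorem \ref{onelevfq} by the standard positivity argument of Iwaniec--Luo--Sarnak \cite[Eqs. (1.40), (1.46)]{ILS}, in the same way Theorem \ref{nonvanishQ} follows from Theorem \ref{onelevQ}. Over $\fq(T)$ the GRH holds by Weil, so every zero of $L(s,\rho_K)$ has real part $1/2$. Moreover, $L(s,\rho_K)$ is a polynomial in $u=q^{-s}$, so its zeros are determined by eigenangles on the unit circle. I will rescale by the conductor, with $\log X$ playing the role of the analytic conductor as in the one-level density. Let $m_K$ denote the order of vanishing of $L(s,\rho_K)$ at $s=1/2$. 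This $m_K$ is counted in the zero sum with multiplicity.

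First I choose an even test function $\phi\geq 0$ on $\mathbb{R}$ with $\phi(0)=1$ and $\widehat\phi$ supported in $[-\sigma,\sigma]$ for $\sigma<1$. The key point is that the symmetry type is symplectic, so the family has even sign: the root number of $L(s,\rho_K)$ is $+1$, since $\rho_K$ is orthogonal (the two-dimensional representation of $S_3$ is real). Hence $m_K$ is even, and $m_K\ge 1$ forces $m_K\ge 2$. By nonnegativity of $\phi$ at the other zeros,
\[
\sum_{\gamma_K}\phi\Big(\frac{\log X}{2\pi}\gamma_K\Big)\geq m_K\geq 2\cdot \mathbf{1}_{L(1/2,\rho_K)=0}.
\]
In the function field setting, $\phi$ is replaced by its periodised version, since the ordinates are only defined modulo $2\pi/\log q$. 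Nonnegativity is preserved under periodisation, and this causes no trouble.

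Averaging over $\mathcal{F}^{S_\infty}(X)$ and applying Theorem \ref{onelevfq} gives
\[
\limsup \frac{1}{\#\mathcal{F}^{S_\infty}(X)}\sum_K \mathbf{1}_{L(1/2,\rho_K)=0}\leq \frac12\Big(\widehat\phi(0)-\frac{\phi(0)}{2}\Big).
\]
It remains to optimise the ratio $\big(\widehat\phi(0)-\phi(0)/2\big)/\phi(0)$ over admissible $\phi$ with support $\sigma\to 1^-$. I will use the Fejér-type choice $\widehat\phi(\xi)=\frac{1}{\sigma}(\sigma-|\xi|)_+ * $ (suitably normalised), which is the square of a function and hence gives $\phi\ge0$. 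This choice yields $\widehat\phi(0)/\phi(0)=1/\sigma$, and the vanishing proportion is then at most $\frac12(1/\sigma-1/2)$. As $\sigma\to1$ this tends to $1/4$, giving non-vanishing at least $75\%$.

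The step I expect to need the most care is justifying the parity claim $m_K$ even. This requires that the functional equation of $L(s,\rho_K)$ have sign $+1$ for every $S_3$-cubic $K$. I will deduce it from Fröhlich--Queyrut, since $\rho_K$ is orthogonal and the Artin root number of an orthogonal representation factoring through a quadratic twist-free situation is $1$; concretely, $W(\rho_K)=1$ for all dihedral $S_3$ representations. Alternatively, I could avoid parity and use the ILS optimised weight, but obtaining the clean constant $75\%$ relies on it.
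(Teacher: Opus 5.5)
Your proposal is correct and follows the paper's route: Theorem \ref{nonvanishfq} is deduced from Theorem \ref{onelevfq} by the standard Iwaniec--Luo--Sarnak positivity argument with the Fej\'er kernel and $\sigma\to 1^-$. As you note, the constant $75\%$ (rather than $50\%$) comes precisely from the even order of vanishing at the central point. For that parity you do not need Fr\"ohlich--Queyrut: here $L(u,\rho_K)=\zeta_K(u)/\zeta_{\fq(T)}(u)=P_K(u)$ is the $L$-polynomial of the curve, whose functional equation $P_K(u)=q^{g_K}u^{2g_K}P_K(1/(qu))$ has sign $+1$, and together with the Riemann hypothesis this forces the root $u=q^{-1/2}$ to have even multiplicity.
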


\subsection{Proof overview}
We now provide an overview of the arguments used to prove the above theorems. The proof of Theorem \ref{onelevQ} is the most central argument, as it forms the basis for the proofs of the improved asymptotic formulae for the counting functions of cubic fields. Therefore, we start by describing how one proves this result.

\subsubsection{Studying the one-level density}

As we shall see in Section \ref{sectonelev} studying the one-level density when $\sigma < 1$ is essentially equivalent to studying double averages of the Artin character $\lambda_K(p)$ over $K$ and $p$, where $p$ is as large as $\abs{\mathrm{Disc}(K)}^{1-\epsilon}$. Proving cancellation in such averages amounts to proving that there are approximately half as many pairs $(p,K)$ of primes $p$ and cubic fields $K$, where $p$ splits completely in $K$, compared to the number of pairs $(p,K)$ where $p$ is inert in $K$, see \eqref{artinprimedef}. One way to approach this problem is to use asymptotic formulae for the number of cubic fields $K$, with a prescribed splitting behaviour at some fixed prime. However, such formulae usually have too large error terms in the $p$-aspect to be of use when $p$ is large. Indeed, even the error terms for the smooth counting function in Theorem \ref{splitcondthmQ} would be insufficient for reaching support $\sigma < 1$, whence we use a different approach. The setup is quite similar to that of \cite{BTT}, which builds upon the work of \cite{LDTT}.

First, we use the Levi--Delone--Faddeev correspondence to study maximal cubic rings instead of cubic fields. We drop the maximality condition by using an inclusion-exclusion sieve, and instead study cubic orders which are nonmaximal at all primes dividing some number $n$. One of the previous obstacles in studying the one-level density is that one needs to study such orders with $n$ very large. By using the GRH, we are able to show that the contribution from $n\geq X^\delta$, for any small $\delta >0$, is negligible, whence we need only consider small $n$, see Lemma \ref{lemmaGRH1}. A similar idea is also the key for studying the sharp counting function of cubic fields, with or without the GRH.

Next, we would like to use Shintani zeta functions to study these nonmaximal rings. However, these zeta functions also count rings which are not necessarily orders. In \cite{SST}, this issue is handled by imposing a splitting condition, requiring a single prime to be inert, which excludes all non-orders from being counted. However, as approximately one third of all cubic fields are inert at a specified prime, such an approach would significantly lower the proportion obtained in Theorem \ref{nonvanishQ}. Instead, we use the GRH once again, this time for quadratic Dedekind zeta functions, in order to precisely estimate the contribution from non-orders, see Proposition \ref{redcountprop}.

Having performed these necessary preparations, we are able to use Shintani zeta functions to study the one-level density. After using the functional equation for such zeta functions, our problem essentially reduces to counting cubic orders which are totally ramified at the prime $p$. At this point, one could use the fact that the number of such forms with discriminant bounded by $Y$ is $\ll_\epsilon Y/p^{2-\epsilon}$. In fact, this would allow for support $\sigma < 2/3$ in Theorem \ref{onelevQ}.

Instead of applying such a bound, we extend results of Datskovsky--Wright for counting orders inside cubic fields, see Lemmas \ref{totramsubordct} and \ref{3adiclemma}, in order to restrict ourselves to cubic fields which are totally ramified at $p$. We then fibre over the possible quadratic resolvents such a field can have and use a theorem due to Shankar--Thorne \cite[Theorem 11]{ShankarThorne}, building upon \cite{CohenThorne} and \cite{CohenMorra}, for studying the number of such fields, with arbitrary splitting conditions. In particular, their result allows us to isolate a contribution which essentially does not depend on $p$, and which we can bound quite straightforwardly, and another part which is essentially proportional to $\lambda_K(p)$, such that its average over $p$ is small under the GRH. This suffices to prove Theorem \ref{onelevQ}.

\subsubsection{Studying the counting function of cubic fields}
The first step towards extending the arguments described above to the problem of counting cubic fields is to relate this counting problem to averages of Artin characters. For this, we use a non-maximality sieve combined with a discriminant-reducing identity of Bhargava--Shankar--Tsimerman \cite[Eq. (69)]{BST}. In particular, this identity shows that counting cubic fields is tantamount to counting binary cubic forms $x$, weighted by the number of roots $\eta_a(x)$ of $x$ modulo $a$, as $a$ varies. More specifically, one considers such averages twisted by $\mu(a)$. 

The first key observation is the fact that, for $x$ which are not zero modulo $p$, $\eta_p(x)=\lambda_p(x)+1$, where $\lambda_p(x)$ is defined analogously to \eqref{artinprimedef}, where the splitting type of $x$ now refers to the splitting of $x$ modulo $p$. In particular, if $x$ is a maximal order corresponding to the cubic field $K$, then $\lambda_p(x)=\lambda_K(p)$. Next, we rewrite our sieve in terms of averages over $\lambda_d(x)$, twisted by $\mu(d)$, with $d$ squarefree. Such averages are then closely related to partial sums associated with $1/L(s,\rho_K)$. In particular, assuming that one has control over the zeros of $L(s,\rho_K)$, one can show cancellation within such sums, with the GRH implying cancellation with a square root error term. Said differently, we obtain cancellation between coefficients of different Shintani zeta functions. 

The above observation essentially suffices to prove our asymptotics for the sharp counting function. Indeed, we split the range of summation into two ranges, depending on the size of $d$. For the range where $d$ is small, we simply apply \cite[Theorem 3.1]{BTT}, after rewriting this part of the summation in terms of non-negative counting functions. Our improvements for the error term in the sharp case instead come from the large $d$-range. In the conditional case, we use the GRH to control averages of $\mu(d)\lambda_d(x)$ when $x$ is a cubic order, and in the unconditional case, we instead use a zero-density result of Thorner--Zaman \cite{TZ}. The contribution from reducible $x$ is handled separately. This is carried out in Sections \ref{sharpGRHsect} and \ref{sharpnoGRHsect}. 

In Section \ref{smoothctallfieldsect} we handle the smooth case. Here, we can do better compared to the sharp case, as we are able to use the GRH when $d$ is small as well. For small $d$, we use Mellin inversion and Shintani zeta functions to study sums over $x$, weighted by $\mu(d)\lambda_d(x)$. After shifting the contour to the left, we apply the functional equation and are able to use \cite[Theorem 11]{ShankarThorne}, as when studying the one-level density, to show substantial cancellation when summing over $d$. We then obtain the desired square-root error term in Theorem \ref{smoothctthm}.

Finally, we remark that Theorem \ref{splitcondthmQ} is obtained by modifying the proof for counting all fields, similar to how \cite[Theorem 1.3]{BTT} is obtained by modifying the proof of \cite[Theorem 1.1]{BTT}. As the proof of \cite[Theorem 1.3]{BTT} is given in great detail, we provide only a sketch of the proof of Theorem \ref{splitcondthmQ}, in Section \ref{splitcondsupsect}, focusing on the novel parts of our argument.

\subsubsection{Studying cubic function field extensions}
Our arguments over $\mathbb{Q}$ carry over without significant difficulties to the function field case, whence we only provide sketches of the proofs. In Section \ref{ffields}, we describe how Theorem \ref{onelevfq} is proved by generalising the proof of Theorem \ref{onelevQ}. The most significant difference is that we circumvent the use of Shintani zeta functions and instead build upon the methods developed by the author in \cite{Ahlquist2}.

In Section \ref{fcnfldctsect}, we describe how to count cubic function fields. The methods generalise from the smooth case over $\mathbb{Q}$. Now, in \cite{Ahlquist2}, the use of the functional equation for the Shintani zeta function over $\mathbb{Q}$ is replaced by the use of certain dual sums over dual forms. One may separate this contribution into a degenerate, and a non-degenerate contribution. Unfortunately, in \cite{Ahlquist2}, the degenerate contribution is handled in a way which prevents an improvement of the error term past $\mathcal{O}_\epsilon\left(X^{2/3+\epsilon}\right)$. This issue is rectified in Section \ref{degencontribsect} using an explicit parametrisation of the degenerate forms. The rest of the argument over $\mathbb{Q}$ generalises mutatis mutandis.

\subsection{Prospects for improvement}
As mentioned before the statement of Theorem \ref{smoothctthm}, numerical experiments indicate that the error term in the sharp counting function of cubic field extensions of $\mathbb{Q}$ is of square-root size. If this is the case, then the error terms in Theorem \ref{uncondsharpctthm}, Theorem \ref{quasiapplthm} and Theorem \ref{sharpgrhthm} are far from optimal.


As explained in the previous section, the reason why we are able to obtain a square-root error term in the smooth case is because we are able to exploit cancellation between coefficients of different Shintani zeta functions. However, when studying the sharp counting function, we use \cite[Theorem 3.1]{BTT}, which requires the coefficients of the zeta functions for which one seeks to obtain partial sum estimates to be nonnegative. Hence, we do not obtain any cancellation. We have not been able to modify Landau's method, used in \cite{LDTT}, which \cite[Theorem 3.1]{BTT} builds upon, to allow for coefficients which are not necessarily nonnegative. Such a modification would likely improve the error term one obtains when studying the sharp counting function, both in the unconditional and the conditional case.

Finally, we remark that even if one is able to make the necessary modifications to Landau's method, obtaining a square-root error term in the sharp counting function, even under the GRH, seems out of reach at the present moment. Indeed, even for the counting function of all cubic forms, the current best error term is of size $\mathcal{O}\left(X^{3/5}\right)$, see e.g. \cite[Theorem 3.1]{BTT}, with $m=1$.

\subsection{Conventions}
Given a set $M$ and functions $f,g: M\to \mathbb{C}$, we write $f = \mathcal{O}(g)$ if there is a constant $C$ such that $\abs{f}\leq C\abs{g}$ on $M$. We will also use the notation $f\ll g$ if $f=\mathcal{O}(g)$. If $f\ll g$ and $g\ll f$, then we write $f\asymp g$. The dependence of the constant $C$ on a variable is indicated through a subscript, unless such a dependence clearly holds, in which case we often leave out the subscript. In particular, when studying the one-level density, all constants are allowed to depend on the test function $\phi$, and thus also on the support $\sigma$. When studying the smooth counting function of cubic fields, all constants are allowed to depend on the smooth function $h$. We will write $f = o(1)$ if $f(X)\to 0$ as $X\to \infty$. Moreover, we make use of a somewhat non-standard notation and write $f\sim g$ if $f-g = o(1)$. 

Next, for a function $\psi:\mathbb{R}\to \mathbb{C}$, we denote its Fourier transform by $\widehat{\psi}$. Here, we use the convention that
\begin{equation*}
    \widehat{\psi}(u)=\int_{\mathbb{R}}\psi(x)e^{-2\pi i x u}dx.
\end{equation*}
If $G$ is a finite abelian group, and $f: G\to \mathbb{C}$, then $\widehat{f}$ is its finite Fourier transform, defined by
\begin{equation*}
    \widehat{f}(\chi) = \frac{1}{\#G}\sum_{g\in G}f(g)\chi(g),
\end{equation*}
for a character $\chi$ of $G$. With this choice of definition, one may extract $f$ from its Fourier transform through the identity
\begin{equation*}
    f(g)=\sum_{\chi\in \widehat{G}}\widehat{f}(\chi)\overline{\chi}(g).
\end{equation*}

Throughout, we shall make use of several arithmetic functions. We denote by $\sigma(n)$ the sum of divisors of $n$, and by $\omega(n)$ the number of prime factors of $n$. We let $\mathrm{sqf}(n)$ denote the squarefree kernel of $n$, i.e. the product of all primes dividing $n$. 

\subsection{Acknowledgements}
I would like to thank Anders Södergren for valuable comments and for encouraging me to extend the scope of a previous draft of this paper, to include the case of cubic extensions of $\mathbb{Q}$.
\subsection{AI statement}
ChatGPT was used to perform literature searches during the research process. Moreover, after the first version of this preprint was made publicly available on arXiv, ChatGPT $5.6$ was used to proofread the text. In particular, this led to the identification of a technical error in a previous proof of Proposition \ref{zerodenspropcube}.

\section{Preliminaries}
Here, and until Section \ref{ffields}, we restrict ourselves to the case of number fields for simplicity.

Let $K$ be an $S_3$-cubic extension of $\mathbb{Q}$. For $\Re(s) > 1$, the Dedekind zeta function associated with $K$ is given by the Euler product
\begin{equation}\label{dedekindef}
    \prod_p\prod_{\mathcal{P}\mid p} \left(1-\abs{\mathcal{P}}^{-s}\right)^{-1},
\end{equation}
where $\mathcal{P}$ is a prime ideal of $K$, and $\abs{\mathcal{P}}:=\abs{\mathcal{O}_K/\mathcal{P}}$. By general theory, see e.g. \cite[Ch. VII.10]{Neukirch}, the Dedekind zeta function splits into a product
\begin{equation*}
    \zeta_K(s)=\zeta(s)L(s,\rho_K),
\end{equation*}
where $L(s,\rho_K)$ is the Artin $L$-function associated with the unique irreducible two-dimensional representation of $S_3$. As the Artin conjecture is known for these $L$-functions, $L(s,\rho_K)$ is an entire function.

Associated with each $L(s,\rho_K)$ is an Artin character $\lambda_K$, defined by
\begin{equation*}
    L(s,\rho_K)=\sum_{n\geq 1}\frac{\lambda_K(n)}{n^s}.
\end{equation*}
We will only need to study this character at squarefree numbers $n$. Now, $\lambda_K(n)$ is multiplicative in $n$, and at primes, we see from \eqref{dedekindef} that
\begin{equation}\label{artinprimedef}
    \lambda_K(p)=\begin{cases}
        2, &\text{ for $p$ of type $(111)$ in $K$,}\\
        0, &\text{ for $p$ of type $(21)$ in $K$,}\\
        -1, &\text{ for $p$ of type $(3)$ in $K$,}\\
        1, &\text{ for $p$ of type $(1^21)$ in $K$,}\\
        0, &\text{ for $p$ of type $(1^3)$ in $K$.}
    \end{cases}
\end{equation}
This agrees with the definition of the character $a_K(p)$ at primes $p$, where $a_K$ is defined by the relation
\begin{equation*}
    -\frac{L'(s,\rho_K)}{L(s,\rho_K)} = \sum_{n\geq 1} \frac{\Lambda(n)a_K(n)}{n^s},
\end{equation*}
where $\Lambda(n)$ is the von-Mangoldt function. We will also be interested in sums of characters $\lambda_K(n)$ twisted by $\mu(n)$. For squarefree $n$, one also concludes from \eqref{dedekindef} that $\mu(n)\lambda_K(n) = \mu_K(n)$, where
\begin{equation*}
    \frac{1}{L(s,\rho_K)}=\sum_{n\geq 1}\frac{\mu_K(n)}{n^s}.
\end{equation*}

Any cubic field $K$ contains a unique distinguished ring, namely its ring of integers, $\mathcal{O}_K$. The ring $\mathcal{O}_K$ is free over $\mathbb{Z}$ of rank $3$. In general a ring $A$, which is free over $\mathbb{Z}$ of rank $3$, is known as a cubic ring. We say that $A$ is a cubic order if it is a domain. The ring $\mathcal{O}_K$ is a maximal order, in the sense that it contains every other cubic ring contained in $K$. Every cubic ring $A$, whose discriminant is nonzero, is contained inside a maximal cubic ring $A'$. Moreover, $A'$ is the ring of integers of a cubic field if and only if $A$ is an order. In general, a maximal cubic ring is either isomorphic to $\mathcal{O}_K$ for a cubic $K$, $\mathcal{O}_{L}\times \mathbb{Z}$ for a quadratic $L$, or isomorphic to $\mathbb{Z}^3$.

Cubic rings are closely related to (binary) cubic forms, i.e. homogeneous polynomials of the form
\begin{equation*}
    f(u,v)=au^3+bu^2v+cuv^2+dv^3.
\end{equation*}
We denote the space of such forms with coefficients in some ring $A$ by $V(A)$. On the space $V(A)$, one may define a twisted $\GLtwo(A)$-action by letting $\gamma\in \GLtwo(A)$ act by
\begin{equation*}
    (\gamma f)(u,v) = \frac{1}{\det \gamma}f\big((u,v)\gamma\big),
\end{equation*}
see also \cite[Section 2]{TT1} for an explicit description of how $\gamma$ acts on the coefficients of $f$. The Delone--Faddeev correspondence, see e.g. \cite[Section 2]{BST} for a detailed description, asserts that there is a discriminant-preserving bijection between the set of orbits of this group action and the set of isomorphism classes of cubic rings. Moreover, the ring corresponding to a cubic form is an order if and only if the form is irreducible. 

Given a cubic form $f\in V(\mathbb{Z})$, one may consider its reduction modulo a prime $p$. If not all coefficients are divisible by $p$, the reduced form will either have three distinct roots, exactly one root, no roots, a double root, or a triple root in $\mathbb{F}_p$. We say that $f$ has splitting type $(111),(21),(3),(1^21)$, or $(1^3)$, respectively. When $f$ corresponds to a maximal order $\mathcal{O}_K$, the splitting type of $f$ agrees with the splitting type of $p$ in $\mathcal{O}_K$.

We remark that maximality is a local condition. Specifically, we say that $A$ is maximal at $p$ if it is not contained in another cubic ring $A'$ with index divisible by $p$. Using a criterion due to Davenport and Heilbronn, see e.g. \cite[Section 3]{BST}, one can check maximality at $p$ by reducing the coefficients modulo $p^2$.

In order to study orbits of cubic forms, one often makes use of Shintani zeta functions associated with the space of binary cubic forms, first studied by Shintani \cite{Shintani}. The simplest such Shintani zeta functions have the form
\begin{equation*}
    \xi^{\pm}(s)=\sum_\sumstack{f\in \GLtwo(\mathbb{Z})\setminus V(\mathbb{Z})\\ \pm \mathrm{Disc}(f) > 0} \frac{\abs{\mathrm{Disc}(f)}^{-s}}{\#\mathrm{Stab}(f)}.
\end{equation*}
Here, we remark that under the Delone--Faddeev correspondence, $\mathrm{Stab}(f)\simeq \mathrm{Aut}(A)$ for the ring $A$ corresponding to $f$.

As described in \cite[Section 2.4]{BTT}, given a $\GLtwo(\mathbb{Z}/m\mathbb{Z})$-invariant function $\Phi_m: V(\mathbb{Z}/m\mathbb{Z})\to \mathbb{C}$, one may form the Shintani zeta function
\begin{equation}\label{shintanigeneral}
    \xi^{\pm}(s,\Phi_m)=\sum_\sumstack{f\in \GLtwo(\mathbb{Z})\setminus V(\mathbb{Z})\\ \pm \mathrm{Disc}(f) > 0} \frac{\Phi_m(f)\abs{\mathrm{Disc}(f)}^{-s}}{\#\mathrm{Stab}(f)} =: \sum_{n\geq 1}\frac{a^{\pm}(\Phi_m,n)}{n^s}.
\end{equation}
As worked out in \cite{DW, Wright, TT1}, these zeta functions have a meromorphic continuation to $\mathbb{C}$ with at most simple poles at $s=1$ and $s=5/6$. Moreover, the values of the residues at these poles are known explicitly.

In order to state the functional equation, we need the concept of a dual zeta function. First, the dual lattice $V(\mathbb{Z})^*$ embeds as the subspace of $V(\mathbb{Z})$ consisting of forms with the middle two coefficients divisible by three, i.e. integral binary cubic forms whose reduction modulo $3$ is of type $(1^3)$, or $(0)$. The discriminant of a dual form is then defined using this embedding.

The Fourier transform of $\Phi_m$ is defined by
\begin{equation*}
    \widehat{\Phi_m}(f)=\frac{1}{m^4}\sum_{g\in V(\mathbb{Z}/m\mathbb{Z})}\Phi_m(g)\mathrm{exp}\left(2\pi i\frac{[f,g]}{m}\right),
\end{equation*}
where
\begin{equation*}
    [f,g]=f_1g_1+f_2g_2/3+f_3g_3/3+f_4g_4,
\end{equation*}
and the $f_i,g_i$ are the coefficients of $f$ and $g$, see \cite[Section 3]{TTOrb}. We lift $\widehat{\Phi_m}$ to a function defined on $V(\mathbb{Z})$, which is zero outside of $V(\mathbb{Z})^*\subseteq V(\mathbb{Z})$. The dual zeta function is then defined by
\begin{equation*}
    \widehat{\xi^{\pm}}(s,\Phi_m)=\xi^{\pm}(s,\widehat{\Phi}_m).
\end{equation*}
Then, see e.g. \cite[Theorem 2.4]{BTT} for our choice of normalisation, we have with 
\begin{equation*}
\begin{split}
   & \Delta^+(s)=\left(\frac{2^43^6}{\pi^4}\right)^{s/2}\Gamma\left(\frac{s}{2}\right)\Gamma\left(\frac{s}{2}+\frac{1}{2}\right)\Gamma\left(\frac{s}{2}-\frac{1}{12}\right)\Gamma\left(\frac{s}{2}+\frac{1}{12}\right),\\
    &\Delta^-(s)=\left(\frac{2^43^6}{\pi^4}\right)^{s/2}\Gamma\left(\frac{s}{2}\right)\Gamma\left(\frac{s}{2}+\frac{1}{2}\right)\Gamma\left(\frac{s}{2}+\frac{5}{12}\right)\Gamma\left(\frac{s}{2}+\frac{7}{12}\right),
\end{split}
\end{equation*}
and
\begin{equation*}
    T=\begin{pmatrix}
        \sqrt{3} & 1 \\ \sqrt{3} & -1
    \end{pmatrix},\,\,\,\,
\end{equation*}
that the functional equation
\begin{equation}\label{genfcnal}
    \Delta(1-s)\cdot T\cdot\xi(1-s,\Phi_m) = m^{4s}\begin{pmatrix}
        3 &0\\0 & -3
    \end{pmatrix}\cdot \Delta(s)\cdot T\cdot \xi(s,\widehat{\Phi_m})
\end{equation}
holds, where
\begin{equation*}
\begin{split}
    \xi(s,\Phi_m) = \begin{pmatrix}
        \xi^+(s,\Phi_m)\\ \xi^-(s,\Phi_m)
    \end{pmatrix}, \,\,\,\, \text{ and }\,\,\,\Delta(s) = \begin{pmatrix}
        \Delta^+(s) & 0\\ 0 & \Delta^-(s)
    \end{pmatrix}.
\end{split}
\end{equation*}

\section{The one-level density}\label{sectonelev}
Let $\mathcal{F}^{\pm}(X)$ be the family of non-Galois cubic fields, whose discriminants have absolute value bounded by $X$ and sign $\pm$. The one-level density of the Artin $L$-functions associated with the fields in $\mathcal{F}^{\pm}(X)$ is then the average
\begin{equation*}
    \frac{1}{\#\mathcal{F}^{\pm}(X)}\sum_{K\in \mathcal{F}^{\pm}(X)}D_\phi(K)\,\,\,\,\text{ with }\,\,\,\, D_\phi(K)=\sum_{\gamma_K}\phi\left(\frac{\log X}{2\pi}\gamma_K\right),
    \end{equation*}
where $\gamma_K$ ranges over the ordinates, with multiplicity, of the zeros of $L(s,\rho_K)$. The function $\phi$ is a real even Schwartz function. 

The one-level density is often studied using the so-called explicit formula. For this family, the explicit formula \cite[Lemma 3.1]{CFLS}, and a change of variables, implies that
\begin{equation*}
    D_\phi(K) = \frac{\widehat{\phi}(0)}{L}\log\abs{D_K}-\frac{2}{L}\sum_{p}\sum_{e=1}^\infty\frac{\log p}{p^{e/2}}\widehat{\phi}\left(\frac{e\log p}{L}\right)a_K(p^e) + o(1).
\end{equation*}
Here, $L=\log X$, $D_K = \mathrm{Disc}(K)$ and $a_K(n)$ is the $n$th coefficient in the Dirichlet series defining $-L'(s,\rho_K)/L(s,\rho_K)$. More precisely, 
\begin{equation*}
    a_K(p^e) = \begin{cases}
        2, &\text{ if $p$ splits according to $(111)$ in $K$,}\\
        2\delta_{2\mid e}, &\text{ if $p$ splits according to $(21)$ in $K$,}\\
        3\delta_{3\mid e}-1, &\text{ if $p$ splits according to $(3)$ in $K$,}\\
        1, &\text{ if $p$ splits according to $(1^21)$ in $K$,}\\
        0, &\text{ if $p$ splits according to $(1^3)$ in $K$.},\\
    \end{cases}
\end{equation*}
see e.g. \cite[Section 1]{CFLS}.

In order to compute the one-level density, we must average $D_\phi(K)$ over $K$. A straightforward application of partial summation proves that
\begin{equation*}
    \frac{\widehat{\phi}(0)}{L}\cdot\frac{1}{\#\mathcal{F}^{\pm}(X)}\sum_{K\in \mathcal{F}^{\pm}(X)}\log \abs{D_K} = \widehat{\phi}(0) +o(1).
\end{equation*}
Hence, without any condition on the support $\sigma<\infty$ of $\widehat{\phi}$, we have that
\begin{equation}\label{onelevexpansion}
    \frac{1}{\#\mathcal{F}^{\pm}(X)}\sum_{K\in \mathcal{F}^{\pm}(X)}D_\phi(K)= \widehat{\phi}(0)-\frac{2}{L\#\mathcal{F}^{\pm}(X)}\sum_{K\in \mathcal{F}^{\pm}(X)}\sum_{p}\sum_{e=1}^\infty\frac{\log p}{p^{e/2}}\widehat{\phi}\left(\frac{e\log p}{L}\right)a_K(p^e) + o(1),
\end{equation}
as $X\to \infty$. It is immediate that the terms $e\geq 3$ may be absorbed in the error term above. Moreover, at the cost of a negligible error, we may replace $a_K$ with a function that equals zero on the ramified splitting types. Indeed, the ramified primes are precisely those which divide the discriminant, and for any $D$ with $\abs{D}\leq X$, we have that
\begin{equation}\label{rambound}
\begin{split}
    \sum_{p\mid D}&\frac{\log p}{p^{1/2}} = \sum_\sumstack{p\mid D\\ p\leq \log X}\frac{\log p}{p^{1/2}}+\sum_\sumstack{p\mid D\\ p> \log X}\frac{\log p}{p^{1/2}}\\&\ll \log\log X\sum_\sumstack{ p\leq \log X}\frac{1}{p^{1/2}} +\frac{1}{(\log X)^{1/2}}\sum_\sumstack{p\mid D}\log p \ll (\log X)^{1/2}\log\log X + (\log X)^{1/2},
\end{split}
\end{equation}
which is $o(\log X)$. Hence, the total contribution of the ramified primes to the one-level density is $o(1)$.

For the terms with $e=2$ in \eqref{onelevexpansion}, we will use asymptotic formulae for the counting functions of the sets
\begin{equation*}
    \mathcal{F}^{\pm}_{p,T}(X) = \{K\in \mathcal{F}^{\pm}(X): p \text{ has splitting type $T$}\},
\end{equation*}
proved in \cite[Theorem 1.3]{BTT}. Then, similarly to \cite[Lemma 3.4]{CFLS}, one shows that
\begin{equation*}
    -\frac{2}{L\#\mathcal{F}^{\pm}(X)}\sum_{K\in \mathcal{F}^{\pm}(X)}\sum_{p}\frac{\log p}{p}\widehat{\phi}\left(\frac{2\log p}{L}\right)a_K(p^2) = -\frac{\phi(0)}{2}+o(1),
\end{equation*}
assuming that $\sigma < 1$. We remark here that the condition $\sigma < 1$ is at this point not a serious obstacle. Indeed, if instead of using bounds for $\#\mathcal{F}^{\pm}_{p,T}(X)$ one uses the GRH for quadratic Dirichlet $L$-functions, as well as the GRH for $L(s,\rho_K)$, then one can bound the error term above as $o(1)$ for any finite $\sigma$.

In order to prove the Katz--Sarnak prediction for $\sigma < 1$, it remains to show that
\begin{equation*}
    \frac{2}{L\#\mathcal{F}^{\pm}(X)}\sum_{K\in \mathcal{F}^{\pm}(X)}\sum_{p}\frac{\log p}{p^{1/2}}\widehat{\phi}\left(\frac{\log p}{L}\right)\theta_{K}(p) = o(1),
\end{equation*}
where 
\begin{equation*}
    \theta_K(p)= \begin{cases}
        2, &\text{ if $p$ splits according to $(111)$ in $K$,}\\
        -1, &\text{ if $p$ splits according to $(3)$ in $K$,}\\
        0, & \text{ else.}
    \end{cases}
\end{equation*}
The part of the sum involving small $p\leq X^{2/7-\epsilon}$ has been shown to be $o(1)$ in \cite{CFLS}. Hence, to make some parts of our argument simpler, we may restrict ourselves to the tail sum, $p\geq X^{1/4},$ say. We interchange the order of summation to rewrite the above as
\begin{equation}\label{bound2win}
    \frac{2}{L\#\mathcal{F}^{\pm}(X)}\sum_{p\geq X^{1/4}}\frac{\log p}{p^{1/2}}\widehat{\phi}\left(\frac{\log p}{L}\right)\sum_{K\in \mathcal{F}^{\pm}(X)}\theta_{K}(p).
\end{equation}

\subsection{Sieving for maximality}
By the Delone--Faddeev correspondence, evaluating the sum
\begin{equation*}
    \sum_{K\in \mathcal{F}^{\pm}(X)}\theta_{K}(p)
\end{equation*}
is equivalent to counting maximal cubic orders with weight $\theta_K(p)$ up to isomorphism. By an inclusion-exclusion argument, we have that
\begin{equation}\label{fieldsieve}
    \sum_{K\in \mathcal{F}^{\pm}(X)}\theta_{K}(p)= \sum_{n=1}^\infty \mu(n)\sum_{f\in \mathcal{F}^{\pm}(W_n;X)}\theta_{K_f}(p):= \sum_{n\geq 1}\mu(n)N^{\pm}(W_n, \theta_p;X),
\end{equation}
where $\mathcal{F}^{\pm}(W_n;X)$ is the collection of binary cubic forms which are nonmaximal at every prime dividing $n$, with discriminant of sign $\pm$ and size bounded by $X$, whose corresponding ring is contained in an $S_3$-cubic field $K_f$. Now, standard estimates, see e.g. \cite[Proposition 4.6]{BTT}, allow us to bound
\begin{equation}\label{nonmaxbd}
    \#\mathcal{F}^{\pm}(W_n;X)\ll_\epsilon \frac{X}{n^{2-\epsilon}}.
\end{equation}
We will use this bound in the next section in order to bound the terms with $n$ large.

\subsection{Bypassing the sieve}
The numbers $N^{\pm}(W_n, \theta_p;X)$ become harder to estimate accurately the larger $n$ is. In this section, by way of the GRH, we manage to restrict ourselves to considering $n$ of very small size $\ll X^{\delta}$. 

To be more precise, our goal in this section is to bound the contribution of
\begin{equation*}
    \sum_{n> X^{\delta}}\mu(n)N^{\pm}(W_n, \theta_p;X),
\end{equation*}
to \eqref{bound2win}, for an arbitrary, small $\delta > 0$. The total contribution to \eqref{bound2win} from such terms equals
\begin{equation}\label{firsttail}
    \frac{2}{L\#\mathcal{F}^{\pm}(X)}\sum_{p\geq X^{1/4}}\frac{\log p}{p^{1/2}}\widehat{\phi}\left(\frac{\log p}{L}\right)\sum_{n> X^{\delta}}\mu(n)\sum_{f\in \mathcal{F}^{\pm}(W_n;X)}\theta_{K_f}(p).
\end{equation}
Interchanging the order of summation gives us an expression
\begin{equation*}
    \frac{2}{L\#\mathcal{F}^{\pm}(X)}\sum_{n> X^{\delta}}\mu(n)\sum_{f\in \mathcal{F}^{\pm}(W_n;X)}\sum_{p\geq X^{1/4}}\frac{\log p}{p^{1/2}}\widehat{\phi}\left(\frac{\log p}{L}\right)\theta_{K_f}(p),
\end{equation*}
which we now bound using the following lemma.
\begin{lemma}\label{lemmaGRH1}
    Assume the GRH for Dedekind zeta functions associated with non-Galois cubic fields. Then,
    \begin{equation}\label{lemmasumbdptwise}
        \frac{2}{L\#\mathcal{F}^{\pm}(X)}\sum_{n> X^{\delta}}\mu(n)\sum_{f\in \mathcal{F}^{\pm}(W_n;X)}\frac{1}{\#\mathrm{Stab}(f)}\sum_{p\geq X^{1/4}}\frac{\log p}{p^{1/2}}\widehat{\phi}\left(\frac{\log p}{L}\right)\theta_{K_f}(p)=o(1).
    \end{equation}
\end{lemma}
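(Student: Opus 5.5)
The plan is to bound the innermost prime sum pointwise in $K_f$ using the GRH, and then sum trivially over $f$ and $n$ using \eqref{nonmaxbd}. Fix an $S_3$-cubic field $K$ with $\abs{D_K}\leq X$. The discriminant of any $f\in\mathcal{F}^\pm(W_n;X)$ is divisible by $n^2$ times $D_{K_f}$, so certainly $\abs{D_{K_f}}\leq X$.

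For $p\nmid D_K$, the first step is to write $\theta_K(p)=a_K(p)-\mathbf{1}_{(21)}(p)\cdot 0$. Since $a_K(p)=\theta_K(p)$ at unramified $p$ (type $(21)$ gives $a_K(p)=0$), we get $\theta_K(p)=a_K(p)$ for $p\nmid D_K$. The ramified primes contribute $\ll(\log X)^{1/2}\log\log X$ by \eqref{rambound}.

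Next, $\widehat\phi(\log p/L)$ is a smooth compactly supported weight in $\log p$. Under GRH for $\zeta_K$, hence for the entire $L(s,\rho_K)$, the standard explicit formula gives
\begin{equation*}
\sum_{p}\frac{\log p}{p^{1/2}}\widehat{\phi}\left(\frac{\log p}{L}\right)a_K(p)\ll L\left(\log\abs{D_K}+1\right)\ll L^2 .
\end{equation*}
Equivalently, it is bounded by the zero sum $\sum_{\gamma_K}\abs{\phi(L\gamma_K/2\pi)}$ plus the conductor term, each $\ll \log X$ after the normalisation by $L$. The key point is that $\widehat\phi$ is Schwartz on the $\log$-scale, so no loss of a power of $X$ occurs.

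The restriction $p\geq X^{1/4}$ is handled by subtracting the sum over $p<X^{1/4}$. Subtracting it naively costs $X^{1/8}$, so instead I would insert a smooth cutoff $\psi(\log p/L)$ vanishing below $1/4$ and equal to $1$ slightly above. The remaining sharp-edge piece near $X^{1/4}$ is controlled by the GRH bound $\sum_{p\leq y}a_K(p)\log p\ll y^{1/2}\log^2(yX)$ together with partial summation. In both cases the inner sum is $\ll_\epsilon X^{\epsilon}$ uniformly in $K$; the $e=2$ prime-square terms are also harmless, $\ll\log X$. This uniform-in-$K$ bound is the main obstacle, and I expect an $X^{\epsilon}$ bound, rather than the sharper $O(L^2)$, to suffice.

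Finally, since $\#\mathrm{Stab}(f)\geq1$, \eqref{nonmaxbd} gives the following bound for the left side of \eqref{lemmasumbdptwise}:
\begin{equation*}
\ll \frac{X^{\epsilon}}{L\,\#\mathcal{F}^\pm(X)}\sum_{n>X^\delta}\frac{X}{n^{2-\epsilon}}\ll X^{\epsilon}X^{-\delta(1-\epsilon)},
\end{equation*}
using $\#\mathcal{F}^\pm(X)\asymp X$. Choosing $\epsilon$ small in terms of $\delta$, this is $o(1)$.
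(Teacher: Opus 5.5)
Your proposal is correct and follows the paper's route: a uniform $X^{\epsilon}$ bound for the inner prime sum from the GRH-effective Chebotarev estimate $\sum_{p\le x}\theta_{K}(p)\log p\ll x^{1/2}\log(x\abs{\mathrm{Disc}(K)})$ together with partial summation, and then \eqref{nonmaxbd} with $\#\mathcal{F}^{\pm}(X)\asymp X$ to sum over $f$ and $n>X^{\delta}$. The explicit-formula and smooth-cutoff detour is unnecessary: the paper applies partial summation directly to the sharp range $p\ge X^{1/4}$, exactly as you already do for your edge piece, and this gives the bound at once.
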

\begin{proof}
    Assuming the GRH, a strong form of Chebotarev's density theorem \cite[Eq. (5.109)]{IK} implies that for sufficiently large $x$,
    \begin{equation*}
        \sum_{p\leq x} \theta_{K_f}(p)\log p \ll x^{1/2}\log\big(x\abs{\mathrm{Disc}(f)}\big).
    \end{equation*}
    Denoting the left-hand side above by $S(x)$, summation by parts shows that
    \begin{equation*}
        \sum_{p\geq X^{1/4}}\frac{\log p}{p^{1/2}}\widehat{\phi}\left(\frac{\log p}{L}\right)\theta_{K_f}(p) =-\frac{S(X^{1/4})}{X^{1/8}}\widehat{\phi}\left(\frac{\log X^{1/4}}{L}\right) -\int_{X^{1/4}}^\infty S(t)\frac{d}{dt}\left(\frac{1}{t^{1/2}}\widehat{\phi}\left(\frac{\log t}{L}\right)\right) dt \ll X^{\epsilon},
    \end{equation*}
    as $\widehat{\phi}$ has compact support. Combined with the bound \eqref{nonmaxbd}, this gives the desired result.
\end{proof}

\subsection{Landau's method and Shintani zeta functions}
It remains to bound
\begin{equation}\label{smallnsum}
    \frac{2}{L\#\mathcal{F}^{\pm}(X)}\sum_{p\geq X^{1/4}}\frac{\log p}{p^{1/2}}\widehat{\phi}\left(\frac{\log p}{L}\right)\sum_{n\leq X^{\delta}}\mu(n)\sum_{f\in \mathcal{F}^{\pm}(W_n;X)}\theta_{K_f}(p).
\end{equation}
Recall that if $f$ is the cubic form corresponding to a maximal order $\mathcal{O}_K$, then the splitting behaviour of $p$ in $K$ can be deduced from the structure of the roots of $f\nsmod{p}$. For example, $K$ is of type $(111)$ at $p$ if $f\nsmod{p}$ factors into three linear factors, and one has similar statements for the other splitting types. Using this observation, we may define $\theta_f(p)$ for an arbitrary cubic form, in a way that agrees with $\theta_{K_f}(p)$ for maximal orders. If $f$ is maximal at $p$, then a localisation argument shows that $\theta_f(p)$ agrees with $\theta_{K_f}(p)$. Now, potential terms $f$ which are nonmaximal at $p$ satisfy $p^2\mid \mathrm{Disc}(f)$, whence the contribution of such terms to the one-level density can be discarded at the cost of an error $o(1)$ to \eqref{bound2win}, by an argument similar to \eqref{rambound}. Hence, we may replace \eqref{smallnsum} by
\begin{equation}\label{bound2winsmalln}
    \frac{2}{L\#\mathcal{F}^{\pm}(X)}\sum_{p\geq X^{1/4}}\frac{\log p}{p^{1/2}}\widehat{\phi}\left(\frac{\log p}{L}\right)\sum_{n\leq X^\delta} \mu(n)\sum_{f\in \mathcal{F}^{\pm}(W_n;X)}\theta_{f}(p).
\end{equation}

The innermost sum above equals
\begin{equation*}
    \Tilde{A}_0(\theta_p\otimes \Psi_{n}; X) :=\sum_{m\leq X}a^{\pm}_1(\theta_p\otimes\Psi_{n},m).
\end{equation*}
Here, $\Psi_{n}$ is the indicator function of forms nonmaximal at $n$ and 
\begin{equation*}
    a^{\pm}_1(\theta_p\otimes\Psi_{n},m) := \sum_\sumstack{f \leftrightarrow \text{ $S_3$-cubic order} \\ \pm \mathrm{Disc}(f)=m} \theta_p(f)\Psi_{n}(f).
\end{equation*}
By an $S_3$-cubic order, we mean a suborder of the ring of integers of an $S_3$-cubic field. Except for the restriction that $f$ correspond to an $S_3$-cubic order, this is the $m$th coefficient of the Shintani zeta function $\xi^{\pm}(s,\theta_p\otimes \Psi_{n})$.

In order to study these partial sums, we use a uniform version of a classical method due to Landau, following \cite{LDTT}. As we shall need to carefully utilise cancellation coming from $\theta_p$, we cannot cite the results of \cite{LDTT} and use them out of the box. Instead, we follow their general setup, and make deviations suitable for our specific problem.

The starting point of Landau's method is to consider the smoothed partial sums
\begin{equation*}
    \Tilde{A}_{k}(\theta_p\otimes \Psi_{n}; X) :=\frac{1}{\Gamma(k+1)}\sum_{m\leq X}a^{\pm}_1(\theta_p\otimes\Psi_{n},m)(X-m)^k,
\end{equation*}
for integers $k\geq 0$. One then extracts information about $\Tilde{A_0}$ using finite differencing. Given any function $F:\mathbb{R}\to \mathbb{C}$, and $y\geq 1$, we define the finite-differencing operator $\Delta^k_y(X)$ by
\begin{equation*}
    \Delta^k_yF(X) = \sum_{\nu=0}^k(-1)^{k-\nu}\binom{k}{\nu}F(X+\nu y).
\end{equation*}
We shall make frequent use of the trivial bound
\begin{equation}\label{trivialfindiff}
    \Delta^k_yF(X) \ll_k \sup_{t\in [X,X+ky]} \abs{F(t)}.
\end{equation}
In the sequel, all implicit constants are allowed to depend on $k$, without indicating this. A short calculation, see the proof of \cite[Lemma 8]{LDTT}, shows that
\begin{equation*}
    \Delta^k_y \Tilde{A}_k(\theta_p\otimes \Psi_{n}; X) = \Tilde{A}_0(\theta_p\otimes \Psi_{n}; X)y^k+ \frac{1}{\Gamma(k+1)}\sum_{\nu=0}^k(-1)^{k-\nu}\binom{k}{\nu}\sum_{X<m\leq X+\nu y}a^{\pm}_1(\theta_p\otimes\Psi_{n},m)(X+\nu y-m)^k,
\end{equation*}
so that one may recover $\Tilde{A_0}$ from $\Tilde{A_k}$ assuming that the tail sum can be controlled.

Using the relation above, we may write
\begin{equation*}
    \sum_{f\in \mathcal{F}^{\pm}(W_n;X)}\theta_{f}(p) = y^{-k}\Delta^k_y \Tilde{A}_k(\theta_p\otimes \Psi_{n}; X)-\frac{y^{-k}}{\Gamma(k+1)}\sum_{\nu=0}^k(-1)^{k-\nu}\binom{k}{\nu}\sum_{X<m\leq X+\nu y}a^{\pm}_1(\theta_p\otimes\Psi_{n},m)(X+\nu y-m)^k.
\end{equation*}
We now use the GRH once again to bound the contribution from the tail sum above to \eqref{bound2winsmalln}. Specifically, we prove the following lemma.
\begin{lemma}\label{GRHlemma2}
    Assume the GRH for non-Galois cubic fields. Then, for $y\ll X^{1-\eta}$, with $\eta > \delta$ fixed and small, we have that
    \begin{equation*}
        \frac{2}{L\#\mathcal{F}^{\pm}(X)}\sum_{p\geq X^{1/4}}\frac{\log p}{p^{1/2}}\widehat{\phi}\left(\frac{\log p}{L}\right)\sum_{n\leq X^\delta} \mu(n)\frac{y^{-k}}{\Gamma(k+1)}\sum_{\nu=0}^k(-1)^{k-\nu}\binom{k}{\nu}\sum_{X<m\leq X+\nu y}a^{\pm}_1(\theta_p\otimes\Psi_{n},m)(X+\nu y-m)^k = o(1).
    \end{equation*}
\end{lemma}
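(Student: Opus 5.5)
We follow the strategy of Lemma \ref{lemmaGRH1}: move the sum over $p$ inside, use GRH to bound it for each fixed form, and then count the forms trivially. The tail term only involves $f$ with $X<\abs{\mathrm{Disc}(f)}\le X+ky$. It also carries the weight $y^{-k}(X+\nu y-m)^k/\Gamma(k+1)$, which is $\ll 1$ uniformly because $0\le X+\nu y-m\le ky$.

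We write out $a^{\pm}_1(\theta_p\otimes\Psi_n,m)$ as a sum over $S_3$-cubic orders $f$ with $\pm\mathrm{Disc}(f)=m$ that are nonmaximal at every prime dividing $n$. Interchanging summations, the expression becomes
\begin{equation*}
\frac{2}{L\#\mathcal{F}^{\pm}(X)}\sum_{n\le X^\delta}\mu(n)\sum_{\nu=0}^k(-1)^{k-\nu}\binom{k}{\nu}\sum_{\substack{f\ \text{nonmax. at } n\\ X<\pm\mathrm{Disc}(f)\le X+\nu y}} w_\nu(f)\sum_{p\ge X^{1/4}}\frac{\log p}{p^{1/2}}\widehat{\phi}\Big(\frac{\log p}{L}\Big)\theta_f(p),
\end{equation*}
with $w_\nu(f)\ll 1$. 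For each such $f$, $\theta_f(p)=\theta_{K_f}(p)$ except when $p$ is a prime at which $f$ is nonmaximal. Such primes satisfy $p^2\mid\mathrm{Disc}(f)$, so their contribution is $\ll (\log X)^{1/2}$ per form by the argument of \eqref{rambound}. For the remaining primes, the GRH-strength Chebotarev bound from the proof of Lemma \ref{lemmaGRH1} gives $S(x)\ll x^{1/2}\log(x\abs{\mathrm{Disc}(f)})$. Partial summation then bounds the inner $p$-sum by $\ll X^{\epsilon}$, uniformly in $f$, since $\widehat\phi$ has compact support and $\abs{\mathrm{Disc}(f)}\le 2X$.

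It remains to count the forms $f$ that are nonmaximal at $n$ and have discriminant in the short interval $(X,X+ky]$. There are two possible routes.

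\emph{Crude route.} Drop the short-interval condition and use \eqref{nonmaxbd}. This yields the bound $X^{1+\epsilon}$ without any saving, and it fails.

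\emph{Short-interval route.} Count forms with discriminant in $(X, X+ky]$, nonmaximal at $n$. The point is that this count should be $\ll y\,n^{-2+\epsilon}+X^{1-\eta'}$ for some $\eta'>\epsilon$. There are two ways to obtain it:
\begin{itemize}
\item from a power-saving asymptotic for $\#\mathcal{F}^{\pm}(W_n;X)$, uniform in $n$ (as in \cite[Theorem 3.1]{BTT} or \cite[Proposition 4.6]{BTT}), by differencing at $X$ and $X+ky$;
\item more robustly, by majorising the characteristic function of the interval and applying the uniform Landau-type count for nonnegative coefficients $\Psi_n$, where only upper bounds are needed.
\end{itemize}
Summing over $n\le X^\delta$ then gives a total of
\begin{equation*}
\ll X^{\epsilon}\big(y+X^{1-\eta'+\delta}\big)/X,
\end{equation*}
which is $o(1)$ provided $y\ll X^{1-\eta}$ with $\eta>\delta$, $\epsilon$ small, and the error saving of the uniform count exceeds $\delta$. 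This explains the hypothesis $\eta>\delta$ in the statement.

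The main obstacle is this short-interval count, uniform in $n$ up to $X^\delta$. I would take it from the error term in \cite[Theorem 3.1]{BTT}, which has the shape $X^{3/5}$-type power savings times a polynomial in $n$. This is acceptable since $\delta$ is arbitrarily small. I would also check that the extra factor $X^\epsilon$ from GRH is dominated by the saving $X^{-\eta}$.
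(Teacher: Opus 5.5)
Your argument is correct and has the same skeleton as the paper's proof. You interchange the sums and bound the prime sum by $X^{\epsilon}$ uniformly in $f$, using the GRH form of Chebotarev and partial summation; your separate treatment of primes at which $f$ is nonmaximal is a detail the paper glosses over. You then count forms whose discriminant lies in the short interval.

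The difference is in that last step. The paper takes a route that lies between your two options. It bounds $\Psi_n\le 1$ but keeps the short-interval condition. It then uses only that the total number of cubic forms with discriminant in $(X,X+\nu y]$ is $\ll y+X^{3/5+\epsilon}$, hence $o(X^{1-\delta})$ when $y\ll X^{1-\eta}$. The trivial sum over $n\le X^{\delta}$ then costs a factor $X^{\delta}$, and this is exactly where $\eta>\delta$ enters.

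Your version keeps the decay in $n$. This forces you to use a short-interval asymptotic for $\#\mathcal{F}^{\pm}(W_n;X)$ that is uniform in $n$. It must come from \cite[Theorem 3.1]{BTT} applied to $\Psi_n$ modulo $n^2$; \cite[Proposition 4.6]{BTT} is only an upper bound, so differencing it gives nothing. That is more input than the lemma needs.

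In your route the hypothesis $\eta>\delta$ is actually unnecessary: $\eta>\epsilon$ suffices for the $y$-term, and the $n$-sum only needs the uniform error saving to exceed $\delta$. So your closing remark that your argument ``explains the hypothesis'' really describes the paper's simpler argument, not yours.
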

\begin{proof}
    Interchanging the order of summation in the expression above yields
\begin{equation*}
    \frac{2}{L\#\mathcal{F}^{\pm}(X)}\sum_{n\leq X^\delta} \mu(n)\frac{y^{-k}}{\Gamma(k+1)}\sum_{\nu=0}^k(-1)^{k-\nu}\binom{k}{\nu}\sum_{X<m\leq X+\nu y}(X+\nu y-m)^k\sum_{p\geq X^{1/4}}\frac{\log p}{p^{1/2}}\widehat{\phi}\left(\frac{\log p}{L}\right)a^{\pm}_1(\theta_p\otimes\Psi_{n},m).
\end{equation*}
Now, 
\begin{equation*}
    \sum_{p\geq X^{1/4}}\frac{\log p}{p^{1/2}}\widehat{\phi}\left(\frac{\log p}{L}\right)a^{\pm}_1(\theta_p\otimes\Psi_{n},m) = \sum_\sumstack{f \leftrightarrow \text{ $S_3$-cubic order} \\ \pm \mathrm{Disc}(f)=m}\Psi_{n}(f)\sum_{p\geq X^{1/4}}\frac{\log p}{p^{1/2}}\widehat{\phi}\left(\frac{\log p}{L}\right) \theta_p(f).
\end{equation*}
Similar to the proof of Lemma \ref{lemmaGRH1}, one may bound the prime sum as being $\ll X^{\epsilon}$. Combining this with the fact that the number of cubic forms with discriminant in $(X,X+\nu y]$ is $o(X^{1-\delta})$, for $y$ as in the statement of the Lemma (and $\delta$ small), we obtain the desired result.
\end{proof}
\begin{remark}
    The necessity of Lemma \ref{GRHlemma2} comes from the fact that the coefficients of our zeta function are not nonnegative. Indeed, nonnegativity is crucial for the rest of our arguments, as we shall obtain substantial cancellation when averaging over $p$. For zeta functions with nonnegative coefficients, one could instead make use of the observation described at the end of the proof of \cite[Theorem 5]{LDTT} in order to disregard the tail sum.
\end{remark} 
In the sequel, we will take $y=X^{1-\eta}$, with $\eta$ small (depending only on $\sigma$ and $\delta$). Let us write $\xi_{S_3}^{\pm}(s,\theta_p\otimes \Psi_{n})$ for the zeta function generated by the coefficients $a^{\pm}_1(\theta_p\otimes\Psi_{n},m)$. Then, one instance of Perron's formula, see e.g. \cite[Eq. (33)]{BTT}, gives that
\begin{equation*}
    \Tilde{A}_k(\theta_p\otimes \Psi_{n}; X) = \frac{1}{2\pi i}\int_{(2)}\xi_{S_3}^{\pm}(s,\theta_p\otimes \Psi_{n})\frac{X^{s+k}}{s(s+1)...(s+k)}ds.
\end{equation*}
We now complete $\xi_{S_3}^{\pm}(s,\theta_p\otimes \Psi_{n})$ into a proper Shintani zeta function $\xi^{\pm}(s,\theta_p\otimes \Psi_{n})$, by including the coefficients corresponding to reducible forms, and $C_3$-forms. We write 
\begin{equation*}
    \xi_{S_3^c}^{\pm}(s,\theta_p\otimes \Psi_{n}):= \xi^{\pm}(s,\theta_p\otimes \Psi_{n})-\xi_{S_3}^{\pm}(s,\theta_p\otimes \Psi_{n}),
\end{equation*}
so that
\begin{equation}\begin{split}
     \Tilde{A}_k(\theta_p\otimes \Psi_{n}; X) &= \frac{1}{2\pi i}\int_{(2)}\xi^{\pm}(s,\theta_p\otimes \Psi_{n})\frac{X^{s+k}}{s(s+1)...(s+k)}ds-\frac{1}{2\pi i}\int_{(2)}\xi_{S_3^c}^{\pm}(s,\theta_p\otimes \Psi_{n})\frac{X^{s+k}}{s(s+1)...(s+k)}ds\\ 
     &=:I_1^\pm(n,p,X)-I_2^\pm(n,p,X)
\end{split}
\end{equation}

We summarise our findings in the following lemma.
\begin{lemma}\label{onelevallforms}
    Conditional on the GRH for non-Galois cubic Dedekind zeta functions, and assuming that either $\sigma < 1$, or that the GRH for Dedekind zeta functions associated with quadratic fields holds, we have that 
    \begin{equation*}
        \frac{1}{\#\mathcal{F}^{\pm}(X)}\sum_{K\in \mathcal{F}^{\pm}(X)}D_\phi(K) \sim \widehat{\phi}(0)-\frac{\phi(0)}{2}-\frac{2}{L\#\mathcal{F}^{\pm}(X)}\sum_{p\geq X^{1/4}}\frac{\log p}{p^{1/2}}\widehat{\phi}\left(\frac{\log p}{L}\right)\sum_{n\leq X^\delta} \mu(n)y^{-k}\Delta^k_y \big(I_1^\pm(n,p,X)-I_2^\pm(n,p,X)\big).
    \end{equation*}
    
\end{lemma}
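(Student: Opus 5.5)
This lemma just collects the reductions made so far in this section, so the plan is to chain them in order and track the $o(1)$ errors.

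First, I start from \eqref{onelevexpansion}. The terms with $e\geq 3$ are trivially $o(1)$. For $e=2$, the argument modelled on \cite[Lemma 3.4]{CFLS}, using \cite[Theorem 1.3]{BTT}, gives $-\phi(0)/2+o(1)$ when $\sigma<1$. If instead the GRH for quadratic Dedekind zeta functions is assumed, the remark following that computation gives the same for any finite $\sigma$; this is where the disjunction in the hypothesis is used. For $e=1$, I replace $a_K(p)$ by $\theta_K(p)$. The ramified primes cost $o(1)$ by \eqref{rambound}, and the $(21)$ type already contributes $0$. The range $p\leq X^{1/4}$ is $o(1)$ by \cite{CFLS}, since $1/4<2/7$. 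This leaves \eqref{bound2win}.

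Next, I insert the maximality sieve \eqref{fieldsieve}. By Lemma \ref{lemmaGRH1}, the terms with $n>X^\delta$ contribute $o(1)$. The stabiliser weight there is harmless, since it is bounded by $6$ and the bound is pointwise. For $n\leq X^\delta$, I replace $\theta_{K_f}(p)$ by $\theta_f(p)$. The forms that are nonmaximal at $p$ have $p^2\mid\mathrm{Disc}(f)$, so they are discarded at cost $o(1)$ exactly as in \eqref{rambound}. I should check that this discarding is uniform after summing over $n\leq X^{\delta}$. Using \eqref{nonmaxbd}, the sum $\sum_n X/n^{2-\epsilon}$ converges, so the bound survives. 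This yields \eqref{bound2winsmalln}, whose inner sum is $\Tilde{A}_0(\theta_p\otimes\Psi_n;X)$.

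Then I apply the finite-differencing identity to write $\Tilde{A}_0=y^{-k}\Delta_y^k\Tilde{A}_k$ minus a tail, with $y=X^{1-\eta}$ and $\eta>\delta$. By Lemma \ref{GRHlemma2}, the tail contributes $o(1)$ to the weighted sum over $p$ and $n$. Finally, Perron's formula expresses $\Tilde{A}_k$ as the contour integral of $\xi^{\pm}_{S_3}$. Completing this to the full Shintani zeta function splits it as $I_1^\pm-I_2^\pm$ by definition, and $\Delta_y^k$ is linear.

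The main obstacle is bookkeeping rather than new input. The one point needing care is that each $o(1)$ is uniform after summing over $n\leq X^{\delta}$ and over $p$ with the weights $\log p/p^{1/2}$. Each step above was already proved with these sums included, so I expect it to go through.
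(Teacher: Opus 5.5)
Your proposal is correct and follows the paper's route: the lemma simply collects the reductions of Section~\ref{sectonelev}. These are the explicit formula with the $e\geq 3$ and ramified terms discarded, the $e=2$ term via \cite{BTT}/\cite{CFLS} (or quadratic GRH, which is where the disjunction enters), small $p$ via \cite{CFLS}, the maximality sieve with Lemma~\ref{lemmaGRH1}, the replacement of $\theta_{K_f}$ by $\theta_f$, finite differencing with Lemma~\ref{GRHlemma2}, and Perron's formula plus completion into $I_1^\pm-I_2^\pm$. Your uniformity check via \eqref{nonmaxbd} for discarding forms nonmaximal at $p$, and your remark on the stabiliser weight, fill in details the paper leaves implicit.
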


\section{Subrings, reducible rings, and $C_3$-rings}\label{quadringsect}
In this section, we estimate the contribution to the one-level density coming from $I_2^\pm(n,p,X)$. Specifically, from Lemma \ref{onelevallforms}, we see that we should study
\begin{equation}\label{correctiononelevterm}
    \frac{2}{L\#\mathcal{F}^{\pm}(X)}\sum_{p\geq X^{1/4}}\frac{\log p}{p^{1/2}}\widehat{\phi}\left(\frac{\log p}{L}\right)\sum_{n\leq X^\delta} \mu(n)y^{-k}\Delta^k_yI_2^\pm(n,p,X).
\end{equation}

\subsection{Subrings of maximal orders}
We will study $I_2^\pm(n,p,X)$ by switching to maximal overrings, and using exact formulae for the number of subrings of a maximal ring. Such formulae were proven by Datskovsky--Wright \cite{DW}, see also \cite[Lemma 3.2]{Nakagawa} for the precise statement that we need. More precisely, if $A$ is a maximal cubic ring, then writing 
\begin{equation*}
    \eta_{\mathrm{Frac}(A)}(s) = \sum_{A'\subseteq A} \frac{1}{[A:A']^s},
\end{equation*}
where $A'$ runs over the cubic subrings of $A$, we have that (\cite[Lemma 3.2]{Nakagawa})
\begin{equation}\label{suborderctdw}
\begin{split}
    \eta_K(s) &= \frac{\zeta_K(s)}{\zeta_K(2s)}\zeta(2s)\zeta(3s-1), \text{ $K$ a cubic field},\\
    \eta_{K\oplus \mathbb{Q}}(s) &= \frac{\zeta_K(s)}{\zeta_K(2s)}\zeta(s)\zeta(3s-1), \text{ $K$ a quadratic field},\\
    \eta_{ \mathbb{Q}^3}(s) &= \frac{\zeta(s)^3}{\zeta(2s)^2}\zeta(3s-1).
\end{split}
\end{equation}
Recall also that $\mathrm{Disc}(A')=[A:A']^2\mathrm{Disc}(A)$. We remark that all the zeta functions above grow at most polynomially in the vertical direction.

Using \eqref{suborderctdw} shows that the contribution of $C_3$-cubic fields to $I^+_2(n,p,X)$ is
\begin{equation}\label{c3int}
   \ll  \frac{1}{2\pi i}\int_{(2)}\sum_{K\text{ $C_3$-cubic}}\frac{1}{\abs{\mathrm{Disc}(K)}^s}\frac{\zeta_K(2s)\zeta(4s)}{\zeta_K(4s)}\zeta(6s-1)\frac{\eta_{K,n}(2s)}{\prod_{q\mid n}\big(1+\eta_{K,q}(2s)\big)}\frac{X^{s+k}}{s(s+1)...(s+k)}ds,
\end{equation}
where we define 
\begin{equation*}
    1+\eta_{K,q}(2s)
\end{equation*}
to be the local factor of $\eta_K$ at a prime $q$, and
\begin{equation*}
    \eta_{K,n} := \prod_{q\mid n}\eta_{K,q}.
\end{equation*}
We may cut off the summation over $K$ at $\abs{\mathrm{Disc}(K)}\leq X$, as integrating the other terms yields $0$, as one sees by shifting the contour by letting $\Re(s)\to+\infty$. Next, we have that
\begin{equation*}
   \bigg\lvert \frac{\zeta_K(2s)\zeta(4s)}{\zeta_K(4s)}\zeta(6s-1)\frac{\eta_{K,n}(2s)}{\prod_{q\mid n}\big(1+\eta_{K,q}(2s)\big)} \bigg\rvert \ll_\epsilon n^\epsilon,
\end{equation*}
when $\Re(s)\geq 1/2+\epsilon$. Moreover, a result of Cohn \cite{Cohn} asserts that the number of $C_3$-cubic fields of discriminant $\leq X$ is asymptotic to  $D_{C_3}X^{1/2}$ for some constant $D_{C_3}$. Combining these two observations, we see that, after shifting the contour of the integral over the remaining terms to $\Re(s)=1/2+\epsilon$, \eqref{c3int} is
\begin{equation}\label{c3intbd}
    \ll X^{k+1/2+\epsilon}.
\end{equation}
Using the trivial bound \eqref{trivialfindiff} then implies that the total $C_3$-contribution to \eqref{correctiononelevterm} is bounded by
\begin{equation*}
    \ll \frac{1}{X}\sum_{p}\frac{\log p}{p^{1/2}}\widehat{\phi}\left(\frac{\log p}{L}\right)\sum_{n\leq X^\delta} \mu(n)y^{-k}X^{k+1/2+\epsilon}\ll X^{\eta k-1/2}\sum_{p\leq X^{\sigma}}\frac{\log p}{p^{1/2}}\ll X^{\eta k+(\sigma-1)/2},
\end{equation*}
which is $o(1)$ when $\sigma < 1$, assuming that $\eta$ is small enough. We remark here that the condition $\sigma < 1$ is superfluous if one assumes the GRH for $\zeta_K$ with $K$ $C_3$-cubic by similar arguments as in the proof of Lemma \ref{lemmaGRH1} and \ref{GRHlemma2}.

\subsection{The reducible contribution}\label{redringssect}

We turn to the contribution from rings which are not domains. We first consider rings whose ring of fractions is $\mathbb{Q}^3$, henceforth these are referred to as $\mathbb{Q}^3$-rings. The contribution from $\mathbb{Q}^3$-rings to $I^+_2(n,p,X)$, is bounded by
\begin{equation*}
    \ll \frac{1}{2\pi i}\int_{(2)}\frac{\zeta(2s)^3}{\zeta(4s)^2}\zeta(6s-1)\frac{\eta_{\mathbb{Q}^3,n}(2s)}{\prod_{q\mid n}\big(1+\eta_{\mathbb{Q}^3,q}(2s)\big)}\frac{X^{s+k}}{s(s+1)...(s+k)}ds.
\end{equation*}
Here, we simply shift this integral to the line $\Re(s)=1/2+\epsilon$ and bound as above. The contribution from the shifted integral is $o(1)$ for $\sigma < 1$.

The remaining contribution comes from rings associated with quadratic fields. If $f$ is a form corresponding to a ring $R(f)$, then unless $R(f)$ is nonmaximal at $p$, $\theta_p(f)$ equals $2$ if $p$ splits in the corresponding quadratic field and it equals zero otherwise. Using \eqref{nonmaxbd} and that $p\geq X^{1/4}$, one sees that the contribution from rings $R(f)$ nonmaximal at $p$ is negligible.

It is well-known that the set of quadratic fields is in bijection with the set of fundamental discriminants $d$, not equal to $1$. Hence, at the cost of a negligible error, we may replace $\theta_p$ by
\begin{equation*}
1+\chi_d(p),
\end{equation*}
where $\chi_d$ is the Kronecker character associated with the fundamental discriminant $d$. This agrees with $\theta_p$ at forms which are maximal at $p$. Writing $K_d$ for the quadratic field of discriminant $d$, we see that the remaining quadratic contribution to $I^\pm_2(n,p,X)$ is thus
\begin{equation}\label{quadint}
  \sum_\sumstack{d\neq 1 \text{ fundamental}\\ \mathrm{sgn}(d)=\pm1}\frac{1}{2\pi i}\int_{(2)}\frac{\big(1+\chi_d(p)\big)}{\abs{d}^s}\frac{\zeta_{K_d}(2s)}{\zeta_{K_d}(4s)}\zeta(2s)\zeta(6s-1)\frac{\eta_{K_d\oplus \mathbb{Q},n}(2s)}{\prod_{q\mid n}\big(1+\eta_{K_d\oplus \mathbb{Q},q}(2s)\big)}\frac{X^{s+k}}{s(s+1)...(s+k)}ds.
\end{equation}

Now, one may factor
\begin{equation*}
    \zeta_{K_d}(s) = \zeta(s)L(s,\chi_d),
\end{equation*}
where the local factor at a prime $q$ of $L(s,\chi_d)$ equals
\begin{equation*}
    \begin{cases}
        (1-q^{-s})^{-1}, \text{ if $q$ splits in $K_d$, } \\
        (1+q^{-s})^{-1}, \text{ if $q$ is inert in $K_d$, }\\
        1, \text{ if $q$ ramifies in $K_d$, }
    \end{cases}
\end{equation*}
and we write this as 
\begin{equation*}
    \big(1-\chi_d(q)q^{-s}\big)^{-1}.
\end{equation*}
Now, $\chi_d(q)$ is multiplicative in its argument. Moreover $\chi_d(q)$ is equal to the Kronecker symbol $\left(\frac{d}{q}\right)$.

In order to study the integral above, we use the following lemma, which is a straightforward consequence of the GRH.
\begin{lemma}\label{grhquadlemma}
    Assuming the GRH for quadratic Dirichlet $L$-functions, we have that
    \begin{equation*}
        \sum_\sumstack{d\neq 1 \text{ fundamental}\\ \mathrm{sgn}(d)=\pm1,\\ \abs{d}\leq X} \chi_d(m)\ll X^{1/2+\epsilon}m^\epsilon,
    \end{equation*}
    when $m$ is a non-square.
\end{lemma}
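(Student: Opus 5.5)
The plan is to reduce the sum over fundamental discriminants to character sums over all integers and then apply GRH for a single quadratic character. The key observation is that for fixed $m$, the map $d\mapsto \chi_d(m)=\left(\frac{d}{m}\right)$ is, up to the behaviour at $2$, a Dirichlet character in $d$. First, factor $m=2^a m'$ with $m'$ odd. For $d\equiv 1 \pmod 4$ the Kronecker symbol satisfies $\chi_d(m)=\left(\frac{d}{2}\right)^a\left(\frac{d}{m'}\right)$, and $\left(\frac{d}{2}\right)$ depends only on $d\bmod 8$. By quadratic reciprocity, $\left(\frac{d}{m'}\right)$ equals a Dirichlet character $\psi_{m}$ of $d$ modulo $8m'$, possibly twisted by a character mod $8$ tracking signs. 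Since $m$ is not a square, $\psi_m$ is non-principal, and its conductor divides $8m$.

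Next, I split the fundamental discriminants into the three classes $d\equiv 1\pmod 4$ squarefree, $d=4d'$ with $d'\equiv 3\pmod 4$ squarefree, and $d=8d'$ with $d'$ odd squarefree. I detect each class by congruence conditions modulo $8$, expanded as a linear combination of characters mod $8$, and by squarefreeness via $\mu^2(d)=\sum_{e^2\mid d}\mu(e)$. In each class, after this expansion, the sum becomes a bounded combination of terms
\begin{equation*}
    \sum_{e\leq X^{1/2}}\mu(e)\chi(e^2)\sum_{\substack{ |d''|\leq X/e^2\\ \mathrm{sgn}=\pm}}\chi(d''),
\end{equation*}
where $\chi$ is a character of modulus $\ll m$ in the variable $d''$. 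When $m$ is a non-square, $\chi$ remains non-principal after multiplication by characters mod $8$, except possibly when $m$ equals $2$ times a square or something similar. In that exceptional case the non-principality is carried by the mod $8$ character $\left(\frac{d}{2}\right)$, which is still a non-principal character.

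The sign restriction on $d$ is harmless: summing over negative $d$ is the same as summing $\chi(-1)\chi(|d|)$ over positive integers.

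Under GRH for $L(s,\chi)$, the standard Perron's formula argument with a contour shift to $\Re(s)=1/2+\epsilon$ gives
\begin{equation*}
    \sum_{n\leq Y}\chi(n)\ll Y^{1/2}(qY)^{\epsilon}
\end{equation*}
for non-principal $\chi$ mod $q$. This uses the Lindelöf-type bound $L(1/2+\epsilon+it,\chi)\ll (q(1+|t|))^\epsilon$, which GRH implies. Note that the characters above may be imprimitive, but the induced extra Euler factors at primes dividing $q$ contribute at most $q^\epsilon$ on $\Re(s)>1/2$.

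Inserting this bound into the decomposition, the inner sum is $\ll (X/e^2)^{1/2}(mX)^\epsilon$. Summing over $e\leq X^{1/2}$ contributes $\sum_e e^{-1}\ll \log X$. This gives the total bound $X^{1/2+\epsilon}m^\epsilon$.

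The only step needing real care is verifying non-principality in every congruence class and for every twist by characters mod $8$. In particular, one must check that for $m=2\cdot\square$ or odd non-square $m$ the character never collapses to a principal one. I would handle this by computing the conductor of $d\mapsto\left(\frac{d}{m}\right)$ on each class explicitly.
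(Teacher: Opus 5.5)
Your proposal is correct and takes essentially the same route as the paper's proof: treat $d\mapsto\chi_d(m)$ as a quadratic Dirichlet character, detect the classes of fundamental discriminants (and squarefreeness) by characters, apply GRH to each resulting non-principal character sum, and handle $m=2\cdot\square$ separately, which is the paper's case $m_0=2$. One correction: your claim that $\psi_m$ is non-principal for every non-square $m$ fails exactly when $m$ is twice a square. In that case the argument works because $\left(\frac{d}{2}\right)$ times either character modulo $4$ detecting $d\equiv 1 \pmod 4$ is non-principal, and this class involves no other character modulo $8$ (which matters, since $\left(\frac{d}{2}\right)^2$ is principal) --- the same point the paper makes for $m_0=2$.
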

\begin{proof}
The argument is standard and we, therefore, only provide a sketch. We begin by noting that the sum may be restricted to $d$ satisfying $(d,m)=1$, as the character is zero on $d$ not satisfying these conditions. For such a $d$, we write
\begin{equation*}
    \chi_d(m)=\left(\frac{d}{m}\right) = \left(\frac{d}{m_0}\right),
\end{equation*}
where $m_0$ is the squarefree part of $m$. One version of quadratic reciprocity states that
\begin{equation}\label{quadrec}
    \left(\frac{d}{m_0}\right) = \left(\frac{m_0(-1)^{(m_0'-1)/2}}{d}\right) = \psi_{m_0}(d),
\end{equation}
where $m_0'$ is the odd part of $m_0$. Here, $\psi_{m_0}$ is a quadratic Dirichlet character of modulus $4m_0$ if $m_0$ is even, and else of modulus $m_0$.

We recall that there are essentially three cases for $d$ modulo $16$. More specifically, we either have that $d$ is squarefree with $d\equiv 1 \nsmod 4$, or $d=4D$ with $D$ squarefree and $D\equiv 2,3\nsmod{4}$. Applying the GRH for quadratic Dirichlet $L$-functions (or equivalently for Dedekind zeta functions associated with quadratic fields), one can bound averages of $\psi_{m_0}(d)$ over squarefree integers $d$ satisfying congruence conditions modulo $16$, by summing against certain Dirichlet characters. For example, one handles the condition $d\equiv 1 \nsmod{4}$ by writing
\begin{equation*}
    \mathbf{1}_{d\equiv 1 \nsmod{4}} = \frac{1}{2}\sum_{\chi \nsmod{4}}\chi(d),
\end{equation*}
where the summation ranges over Dirichlet characters modulo $4$. Using these methods, the case $m_0 \neq 2$ is straightforward, and one obtains a bound $\ll X^{1/2+\epsilon}$. When $m_0=2$, 
\begin{equation*}
    \chi_d(2)=\begin{cases}
        1 , &\text{ if $d\equiv \pm 1 \nsmod{8}$,}\\
        -1, &\text{ if $d\equiv \pm 3 \nsmod{8}$,}\\
        0, &\text{ else.}
    \end{cases}
\end{equation*}
Hence, the only nonzero contribution is from $d\equiv 1 \nsmod{4}$. Now, $\chi_d(2)$ is a primitive Dirichlet character of modulus $8$, with respect to the variable $d$, whence its product with either of the two characters $\nsmod{4}$ is nonprincipal. Hence, by using the GRH we obtain the bound in the statement of the lemma. We omit the details.
\end{proof}
Using the above lemma, we can accurately bound the contribution from $I^\pm_2(n,p,X)$ to the one-level density.
\begin{prop}\label{redcountprop}
    Assuming the GRH for Dedekind zeta functions associated with quadratic fields, and that $\sigma <1$, we have that \eqref{correctiononelevterm} equals
    \begin{equation*}
        \frac{2}{L\#\mathcal{F}^{\pm}(X)}\sum_{p}\frac{\log p}{p^{1/2}}\widehat{\phi}\left(\frac{\log p}{L}\right)\sum_{n\leq X^\delta} \mu(n)y^{-k}\Delta^k_y\frac{\pi^2}{24}\frac{X^{1+k}}{(k+1)!n^2}\prod_{q\mid n}\big(2-q^{-2}\big)+o(1)
    \end{equation*}
\end{prop}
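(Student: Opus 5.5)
We have already disposed of the $C_3$-rings and the $\mathbb{Q}^3$-rings, whose contributions are $o(1)$ for $\sigma<1$. We have also replaced $\theta_p$ by $1+\chi_d(p)$ on quadratic rings. So it remains to evaluate the contribution of \eqref{quadint} to \eqref{correctiononelevterm}. The plan is to split \eqref{quadint} into the part with weight $1$ and the part with weight $\chi_d(p)$, and to treat them separately.

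\textbf{The $\chi_d(p)$-part.} Here we exploit cancellation over $d$. Expand the Euler product
\begin{equation*}
\frac{\zeta_{K_d}(2s)}{\zeta_{K_d}(4s)}\zeta(2s)\zeta(6s-1)\frac{\eta_{K_d\oplus \mathbb{Q},n}(2s)}{\prod_{q\mid n}\big(1+\eta_{K_d\oplus \mathbb{Q},q}(2s)\big)}
\end{equation*}
as a Dirichlet series. Its coefficients are built multiplicatively from values $\chi_d(q)$ at primes $q$, with the factors at $q\mid n$ depending only on the splitting type of $q$. For each fixed coefficient, the $d$-dependence is then a combination of characters $\chi_d(m)$ and of splitting-type indicators at $q\mid n$; the latter are expressible through $\chi_d(q)$, $\chi_d(q)^2$ for $q\mid n$. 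Multiplying by $\chi_d(p)$ with $p\geq X^{1/4}>n$, every resulting term involves $\chi_d(pm')$ with $pm'$ a non-square. This holds because $p$ appears to an odd power, as long as we truncate the Dirichlet series at $m'<p$ or handle $p$-divisible coefficients separately. The $p$-divisible coefficients come with a factor $p^{-2s}$ and are negligible.

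We cut the $d$-sum at $\abs{d}\leq X$, shift the contour to $\Re(s)=1/2+\epsilon$ (there is no pole, since the sum over $d$ is holomorphic there), and apply Lemma \ref{grhquadlemma} via partial summation. This gives a bound $\ll X^{k+1/2+\epsilon}n^{\epsilon}p^{\epsilon}$ for $\Delta^k_y$ of this piece. With the trivial bound \eqref{trivialfindiff}, the total contribution to \eqref{correctiononelevterm} is $\ll X^{\eta k-1/2+\delta+\epsilon}\sum_{p\leq X^\sigma}p^{-1/2}\log p$, which is $o(1)$ for $\sigma<1$ once $\eta,\delta$ are small.

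\textbf{The weight-$1$ part.} This is a count of all quadratic-type cubic rings nonmaximal at $n$. Shift the contour to $\Re(s)=1/2+\epsilon$, picking up the pole at $s=1$. The pole comes from $\sum_d\abs{d}^{-s}$, which has main term $\frac{3}{\pi^2}\cdot\frac{X}{\ldots}$ from the density of fundamental discriminants of given sign, averaged against the local factors.

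Computing the residue requires averaging the local factor at $q\mid n$, namely $\eta_{K_d\oplus\mathbb{Q},q}(2)/(1+\eta_{K_d\oplus\mathbb{Q},q}(2))$, over the splitting type of $q$ in $K_d$ with the local densities of fundamental discriminants. The key step is to show that this average, combined with the global factor $\zeta(2)\,\zeta(4)^{-1}\cdots$ at $s=1$, collapses to $\frac{\pi^2}{24}\cdot n^{-2}\prod_{q\mid n}(2-q^{-2})$ for each sign. An alternative is to identify the residue with the known residue of $\xi^\pm(s,\Psi_n)$ restricted to reducible forms; this follows by comparing the Davenport--Heilbronn/Shintani residues at $s=1$ for reducible rings, since $\#$ rings nonmaximal at $q$ is a local density computation.

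The shifted integral is $\ll X^{k+1/2+\epsilon}$ and is negligible as before. Finally, the residue term carries weight $1$ rather than $\chi_d(p)$, so summing over $p$ gives exactly the displayed main term, and the $p$-restriction $p\geq X^{1/4}$ may be dropped at cost $o(1)$ by the result of \cite{CFLS} for small $p$ (or trivially, since small $p$ are handled separately).

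\textbf{Main obstacle.} I expect the main obstacle to be the exact local computation producing the constant $\frac{\pi^2}{24}$ and the factor $n^{-2}\prod_{q\mid n}(2-q^{-2})$. It requires carefully tracking which overrings are nonmaximal at $q$ for each splitting type (split, inert, ramified) and weighting by the densities $\frac{q}{2(q+1)},\frac{q}{2(q+1)},\frac{1}{q+1}$. Secondarily, one must verify non-squareness uniformly in the $\chi_d(p)$-part.
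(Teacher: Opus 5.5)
Your treatment of the $\chi_d(p)$-weighted part and of the shifted integrals matches the paper's. That is, you apply Lemma~\ref{grhquadlemma} after truncating at $\abs{d}\le X$, and the only square arguments come from $q=p$, which are killed by the factor $p^{-2s}$.

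\textbf{The residue at $s=1$.} This is the actual content of the proposition. You announce it as the key step but never compute it, and the recipe you describe would give the wrong constant. You cannot average the ratio $\eta_{K_d\oplus\mathbb{Q},q}(2)/(1+\eta_{K_d\oplus\mathbb{Q},q}(2))$ at $q\mid n$ and multiply by a $d$-independent ``global factor $\zeta(2)\zeta(4)^{-1}\cdots$''. The factor $\zeta_{K_d}(2)/\zeta_{K_d}(4)$ depends on the same local types, so you must average the full local factor of $\eta_{K_d\oplus\mathbb{Q}}(2s)$ at every prime. With your densities $\frac{q}{2(q+1)},\frac{q}{2(q+1)},\frac{1}{q+1}$, which are valid also at $q=2$, one gets for every $q$
\[
\frac{1}{(1-q^{-2})(1-q^{-5})}\Bigl[\frac{q}{2(q+1)}\bigl((1+q^{-2})^2+1+q^{-4}\bigr)+\frac{1+q^{-2}}{q+1}\Bigr]=\frac{1}{(1-q^{-2})^2}.
\]
At $q\mid n$ the factor is this minus $1$. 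So the $n$-dependence $q^{-2}(2-q^{-2})=1-(1-q^{-2})^2$ is a ratio of averages, not an average of ratios.

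All local factors are $1+O(q^{-2})$ uniformly, and local conditions on fundamental discriminants are independent. Hence the residue is $\frac{3}{\pi^2}\zeta(2)^2n^{-2}\prod_{q\mid n}(2-q^{-2})=\frac{\pi^2}{12}n^{-2}\prod_{q\mid n}(2-q^{-2})$. You still have to include the weight $1/\#\mathrm{Stab}=1/2$ of quadratic-type rings, which you never mention, to reach $\pi^2/24$.

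Done this way, your route is cleaner than the paper's. The paper rewrites the character-free product as \eqref{constfactor}, sums it multiplicatively over $d=2^ed'$ with characters mod $4$, extracts $\zeta(s)/\zeta(5s)$, and needs a $2$-adic case analysis. However, that multiplicative rewriting is also what gives the continuation to $\Re(s)>1/2$ needed to justify the contour shift. Your sketch (``the pole comes from $\sum_d\abs{d}^{-s}$'') leaves that continuation unproved.

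\textbf{The range of $p$.} Your closing remark is wrong: the restriction $p\ge X^{1/4}$ cannot be removed at cost $o(1)$. We have $\sum_n\mu(n)n^{-2}\prod_{q\mid n}(2-q^{-2})=\zeta(2)^{-2}\neq0$ and $y^{-k}\Delta^k_yX^{k+1}/(k+1)!\asymp X$. So each prime contributes $\asymp\frac{\log p}{L\,p^{1/2}}\widehat{\phi}\bigl(\frac{\log p}{L}\bigr)$ to the main term. The primes $p<X^{1/4}$ therefore contribute an amount of order $X^{1/8}/\log X$ whenever $\widehat{\phi}(1/4)\neq0$.

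The $p$-sum in the statement must be read over $p\ge X^{1/4}$, as in \eqref{correctiononelevterm}. This term is not small by itself; it cancels against the $s=1$ residue of $I_1^\pm$ over the same range of $p$ (Section~\ref{ressect}). The input from \cite{CFLS} controls the full $\theta_K(p)$-average for small $p$, not this isolated piece.
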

\begin{proof}
   By our previous computations, it suffices to study the contribution coming from rings associated with quadratic fields. A short computation, using \eqref{suborderctdw}, shows that the zeta function counting suborders of $K_d$ is explicitly given by
    \begin{equation}\label{prodallsubord}
        \prod_{q\nmid d}\frac{1}{(1-q^{-2s})(1-q^{1-6s})}\left(1+\frac{1+\chi_d(q)}{q^{2s}}+\frac{1}{q^{4s}}\right)\prod_{q\mid d} \frac{(1+q^{-2s})}{(1-q^{-2s})(1-q^{1-6s})}.
    \end{equation}
     Hence, the local factor at $q$ of the zeta function counting suborders of $K_d$ with index divisible by $n$ is
     \begin{equation*}
         \frac{1}{(1-q^{-2s})(1-q^{1-6s})}\left(1+\frac{1+\chi_d(q)}{q^{2s}}+\frac{1}{q^{4s}}\right)-1,
     \end{equation*}
     if $q\nmid d$, but $q\mid n$. Else, if $q\mid (d,n)$, the local factor is
     \begin{equation*}
         \frac{(1+q^{-2s})}{(1-q^{-2s})(1-q^{1-6s})}-1,
     \end{equation*}
     and in the case that $q\nmid n$, then the local factor agrees with the local factor of \eqref{prodallsubord}. 
     
     We now want to use Lemma \ref{grhquadlemma} in order to bound the contribution coming from the various $\chi_d(q)$ above. First, the only possibility for the argument being square is if $q=p$, as we multiply by the factor $1+\chi_d(p)$ from \eqref{quadint}. Because of the accompanying factor $p^{-2s}$ coming from $q=p$, these terms provide a negligible contribution to \eqref{quadint}, as $p\geq X^{1/4}$. The remaining contribution from terms involving $\chi_d$ is handled using Lemma \ref{grhquadlemma}, by shifting the integral involving these terms to $\Re(s)=1/2+\epsilon$, whereafter the shifted integral can be bounded by the same expression as in \eqref{c3intbd}.
     
     What remains is to consider the contribution from terms not involving Kronecker characters, i.e. from the Euler product
     \begin{equation*}
     \begin{split}
        \zeta(2s)\zeta(6s-1) \prod_{q\nmid dn}&\left(1+\frac{1}{q^{2s}}+\frac{1}{q^{4s}}\right)\prod_\sumstack{q\mid d\\ q\nmid n} (1+q^{-2s})\\&\times\prod_\sumstack{q\mid n\\ q\nmid d}\left(\left(1+q^{-2s}+q^{-4s}\right)-(1-q^{-2s})(1-q^{1-6s})\right)\prod_\sumstack{q\mid d\\ q\mid n}\left(1+q^{-2s}-(1-q^{-2s})(1-q^{1-6s})\right)
     \end{split}
     \end{equation*}
     We may rewrite the above into a form more amenable to summation over $d$. Specifically, the above equals
    \begin{equation}\label{constfactor}
     \begin{split}
        \zeta(2s)\zeta(6s-1) \prod_{q\nmid n}&\left(1+\frac{1}{q^{2s}}+\frac{1}{q^{4s}}\right)\prod_\sumstack{q\mid d\\ q\nmid n}\frac{(1+q^{-2s})}{\left(1+q^{-2s}+q^{-4s}\right)} \\&\times\prod_\sumstack{q\mid n}\left(2q^{-2s}+q^{-4s}+q^{1-6s}-q^{1-8s}\right)\prod_\sumstack{q\mid d\\ q\mid n}\frac{\left(2+q^{1-4s}-q^{1-6s}\right)}{\left(2+q^{-2s}+q^{1-4s}-q^{1-6s}\right)}.
     \end{split}
     \end{equation}
    We now wish to sum this over $d$ using multiplicativity. We then keep the sign of $d$ fixed. Moreover, we only sum over fundamental discriminants $d$, i.e. $d\equiv1\nsmod{4}$, or $d$ of the form $4d'$, with $d'\equiv2,3 \nsmod{4}$. To sum over one of these types of $d$, we may sum over all integers $d = 2^ed'$ of a fixed sign, with $e=0,2,3$, $d'$ odd squarefree, and weighted by $\mathbf{1}_{d' \equiv c \nsmod{4}}$, where $c =1,3$, unless $e=3$ in which case the only requirement is that $d'$ should be odd. Now,
    \begin{equation*}
        \mathbf{1}_{d' \equiv c \nsmod{4}} = \frac{1}{2}\sum_{\chi \nsmod{4}}\chi(d')\overline{\chi}(c),
    \end{equation*}
    where the summation ranges over the two distinct Dirichlet characters modulo $4$. By way of the GRH, one sees that the contribution from the nontrivial character is negligible for $\sigma <1$, similarly to how we bounded the contribution from $\chi_d$ above. We may thus focus on the contribution from the trivial character $\chi_0$. We have by multiplicativity that
    \begin{equation*}
    \begin{split}
        \frac{1}{2^{\delta_{3\neq e}+es}}&\sum_\sumstack{d'\geq 1 \text{ squarefree} \\ (d',2)=1}\frac{1}{d'^s}\prod_\sumstack{q\mid 2^ed'\\ q\nmid n}\frac{(1+q^{-2s})}{\left(1+q^{-2s}+q^{-4s}\right)}\prod_\sumstack{q\mid 2^ed'\\ q\mid n}\frac{\left(2+q^{1-4s}-q^{1-6s}\right)}{\left(2+q^{-2s}+q^{1-4s}-q^{1-6s}\right)}
        \\&=\frac{1}{2^{\delta_{3\neq e}+es}}\prod_\sumstack{q\mid 2^e\\ q\nmid n}\frac{(1+q^{-2s})}{\left(1+q^{-2s}+q^{-4s}\right)}\prod_\sumstack{q\mid 2^e\\ q\mid n}\frac{\left(2+q^{1-4s}-q^{1-6s}\right)}{\left(2+q^{-2s}+q^{1-4s}-q^{1-6s}\right)}
        \\&\times\prod_\sumstack{ q\nmid n\\q\neq 2}\left(1+\frac{q^{-s}(1+q^{-2s})}{\left(1+q^{-2s}+q^{-4s}\right)}\right)\prod_\sumstack{ q\mid n\\q\neq 2}\left(1+\frac{q^{-s}\left(2+q^{1-4s}-q^{1-6s}\right)}{\left(2+q^{-2s}+q^{1-4s}-q^{1-6s}\right)}\right)
    \end{split}
    \end{equation*}
Including the prefactors from \eqref{constfactor} yields
    \begin{equation*}
    \begin{split}
\frac{\zeta(2s)\zeta(6s-1)}{2^{\delta_{3\neq e}+es}n^{2s}}&\prod_\sumstack{q\mid 2\\ q\nmid n}(1+q^{-2s})^{\delta_{e\geq 1}}(1+q^{-2s}+q^{-4s})^{\delta_{e=0}}\prod_\sumstack{q\mid 2\\ q\mid n}\left(2+q^{1-4s}-q^{1-6s}\right)^{\delta_{e\geq 1}}\left(2+q^{-2s}+q^{1-4s}-q^{1-6s}\right)^{\delta_{e=0}}
        \\&\times\prod_\sumstack{ q\nmid n\\q\neq 2}\left(1+q^{-s}+q^{-2s}+q^{-3s}+q^{-4s}\right)\prod_\sumstack{ q\mid n\\q\neq 2}\left(2+2q^{-s}+q^{-2s}+q^{1-4s}+q^{1-5s}-q^{1-6s}-q^{1-7s}\right)
    \end{split}
    \end{equation*}

Next, we want to shift the contour in the integral \eqref{quadint} to $\Re(s)=1/2+\epsilon$ and pick up the residue at $s=1$. To do this, we first write
\begin{equation*}
    \prod_\sumstack{ q\nmid n\\q\neq 2}\left(1+q^{-s}+q^{-2s}+q^{-3s}+q^{-4s}\right) = \prod_\sumstack{ q\nmid n\\q\neq 2}\frac{1-q^{-5s}}{1-q^{-s}} = \frac{\zeta(s)}{\zeta(5s)}\prod_\sumstack{ q\mid 2n}\frac{1-q^{-s}}{1-q^{-5s}}.
\end{equation*}
The shifted integral is negligible. The residue at $s=1$ contributes
\begin{equation*}
\begin{split}
    \frac{\zeta(2)}{2^{\delta_{3\neq e}+e}n^2}&\frac{X^{1+k}}{(k+1)!}\prod_\sumstack{q\mid 2\\ q\nmid n}(1+1/4)^{\delta_{e\geq 1}}(1+1/4+1/16)^{\delta_{e=0}}\prod_\sumstack{q\mid 2\\ q\mid n}\left(2+1/8-1/32\right)^{\delta_{e\geq 1}}\left(2+1/4+1/8-1/32\right)^{\delta_{e=0}}
        \\&\times\prod_\sumstack{ q\mid 2n}\frac{1-q^{-1}}{1-q^{-5}}\prod_\sumstack{ q\mid n\\q\neq 2}\left(2+2q^{-1}+q^{-2}+q^{-3}+q^{-4}-q^{-5}-q^{-6}\right).
\end{split}
\end{equation*}
Noting that
\begin{equation*}
    2+2q^{-1}+q^{-2}+q^{-3}+q^{-4}-q^{-5}-q^{-6} = (2-q^{-2})(1+q^{-1}+q^{-2}+q^{-3}+q^{-4}),
\end{equation*}
we may write the above as
\begin{equation*}
\begin{split}
    \frac{\zeta(2)}{2^{\delta_{3\neq e}+e}n^2}&\frac{X^{1+k}}{(k+1)!}\prod_\sumstack{q\mid 2\\ q\nmid n}(1+1/4)^{\delta_{e\geq 1}}(1+1/4+1/16)^{\delta_{e=0}}\prod_\sumstack{q\mid 2\\ q\mid n}\left(2+1/8-1/32\right)^{\delta_{e\geq 1}}\left(2+1/4+1/8-1/32\right)^{\delta_{e=0}}
        \\&\times\prod_\sumstack{ q\mid 2}\frac{1-q^{-1}}{1-q^{-5}}\prod_{q\mid n/(2,n)}\left(2-q^{-2}\right).
\end{split}
\end{equation*}
Summing this over all possible local specifications at the prime $2$ yields
\begin{equation*}
\begin{split}
    \zeta(2)&\frac{X^{1+k}}{(k+1)!n^2}\bigg(\frac{1}{4}\prod_\sumstack{q\mid 2\\ q\nmid n}(1+1/4)\prod_\sumstack{q\mid 2\\ q\mid n}\left(2+1/8-1/32\right)+\frac{1}{2}\prod_\sumstack{q\mid 2\\ q\nmid n}(1+1/4+1/16)\prod_\sumstack{q\mid 2\\ q\mid n}\left(2+1/4+1/8-1/32\right)\bigg)
        \\&\times \frac{1}{2}\cdot\frac{1}{1-1/32} \prod_{q\mid n/(2,n)}\left(2-q^{-2}\right).
\end{split}
\end{equation*}
A case distinction depending on whether $2\mid n$ or not shows that in all cases, the above simplifies to
\begin{equation*}
\begin{split}
    \frac{\zeta(2)}{2}&\frac{X^{1+k}}{(k+1)!n^2}\prod_{q\mid n}\left(2-q^{-2}\right).
\end{split}
\end{equation*} 
Lastly, recalling that quadratic rings are weighted by a factor $1/2$ in the Shintani zeta function, we obtain the final result. 
\end{proof}
\begin{remark}
    One could give a different proof of the above proposition, by instead using the approach of Lemma \ref{redshintlemma}. As the method of using counting functions of suborders is of use in the next section, we choose to give the above proof here.
\end{remark}

\section{Shintani zeta functions and total ramification}
In this section, we study $I^{\pm}_1(n,p,X)$. Recall that
\begin{equation*}
     I^{\pm}_1(n,p,X)= \frac{1}{2\pi i}\int_{(2)}\xi^{\pm}(s,\theta_p\otimes \Psi_{n})\frac{X^{s+k}}{s(s+1)...(s+k)}ds.
\end{equation*}
A discriminant-reducing identity \cite[Proposition 7.1]{BTT} asserts that
\begin{equation}\label{Shintanidiscred}
    \xi^{\pm}(s,\theta_p\otimes \Psi_{n}) = n^{-4s}\sum_{abc=n}\mu(b)a^{2s}\xi^{\pm}(s,\theta_p\otimes \eta_{ab}),
\end{equation}
where $\eta_{ab}$ counts the number of roots of a form when reduced modulo $ab$. Hence, we may write
\begin{equation*}
    I^{\pm}_1(n,p,X)= \frac{1}{2\pi i}\sum_{abc=n}\mu(b)\int_{(2)}n^{-4s}a^{2s}\xi^{\pm}(s,\theta_p\otimes \eta_{ab})\frac{X^{s+k}}{s(s+1)...(s+k)}ds.
\end{equation*}
We will make use of the Datskovsky--Wright diagonalisation so that the functional equation becomes simpler to apply. We write
\begin{equation*}
    J^{\pm}_1(n,p,X)= \frac{1}{2\pi i}\sum_{abc=n}\mu(b)\int_{(2)}n^{-4s}a^{2s}\big(\sqrt{3}\xi^{+}(s,\theta_p\otimes \eta_{ab})\pm \xi^{-}(s,\theta_p\otimes \eta_{ab})\big)\frac{X^{s+k}}{s(s+1)...(s+k)}ds,
\end{equation*}
and note that studying $I^{\pm}_1(n,p,X)$ is tantamount to studying $J^{\pm}_1(n,p,X)$. We simplify the notation somewhat by writing
\begin{equation*}
    \xi_{\pm}(s,\theta_p\otimes \eta_{ab}) := \sqrt{3}\xi^{+}(s,\theta_p\otimes \eta_{ab})\pm \xi^{-}(s,\theta_p\otimes \eta_{ab}).
\end{equation*}
Next, we shift the contour above to the line $(\epsilon)$ and thus obtain
\begin{equation}\label{Jtot}
\begin{split}
    J^{\pm}_1(n,p,X)&=\sum_{abc=n}\mu(b)\left(\frac{X^{k+1}a^2}{(k+1)!n^4}\mathrm{Res}_{s=1}\xi_{\pm}(s,\theta_p\otimes \eta_{ab})+\frac{X^{k+5/6}a^{5/3}}{5/6(5/6+1)...(5/6+k)n^{10/3}}\mathrm{Res}_{s=5/6}\xi_{\pm}(s,\theta_p\otimes \eta_{ab})\right)\\
    &  +\sum_{abc=n}\mu(b)\cdot\frac{1}{2\pi i}\int_{(\epsilon)}n^{-4s}a^{2s}\xi_{\pm}(s,\theta_p\otimes \eta_{ab})\frac{X^{s+k}}{s(s+1)...(s+k)}ds.
\end{split}
\end{equation}

\subsection{The contribution from the residues}\label{ressect}
We now study the contribution of the various residues to \eqref{Jtot}. The residues of the Shintani zeta functions $\xi_{\pm}(s,\theta_p\otimes \eta_{ab})$ were found in \cite{TT1}, and are summarised in \cite[Theorem 2.4]{BTT}. More specifically, we have that
\begin{equation*}
    \mathrm{Res}_{s=1} \xi_{\pm}(s,\theta_p\otimes \eta_{ab}) = \frac{\pi^2(\sqrt{3}\pm 1)}{24}\big(1+p^{-1}+p^{-2}\big)^{-1}\left(1-p^{-2}\right)\left(1-p^{-3}\right)2^{\omega(ab)}.
\end{equation*}
The function, $\xi_-(s,\theta_p\otimes \eta_{ab})$ does not have a pole at $s=5/6$, while
\begin{equation*}
    \mathrm{Res}_{s=5/6} \xi_{+}(s,\theta_p\otimes \eta_{ab}) = \frac{2\sqrt{3}\pi^2\zeta(1/3)}{9\Gamma(2/3)^3}p^{-1/3}\left(1-p^{-1}\right)\left(1-p^{-2/3}\right)\prod_{q\mid ab}\left(1+q^{-1/3}\right).
\end{equation*}

Now, the contribution from the residue at $s=5/6$ to the one-level density is $o(1)$ when $\sigma <1$. Indeed,
\begin{equation*}
\begin{split}
    \frac{1}{L\#\mathcal{F}^{\pm}(X)}&\sum_{p\geq X^{1/4}}\frac{\log p}{p^{1/2}}\widehat{\phi}\left(\frac{\log p}{L}\right)\sum_{n\leq X^\delta} \mu(n)y^{-k}\Delta^k_y \sum_{abc=n}\frac{X^{k+5/6}a^{5/3}p^{-1/3}}{n^{10/3}}\ll X^{k\eta-1/6}\sum_{p\geq X^{1/4}}\frac{\log p}{p^{5/6}}\widehat{\phi}\left(\frac{\log p}{L}\right)
    \\&\ll X^{k\eta+ (\sigma-1)/6}.
\end{split}
\end{equation*}

The contribution from the residue at $s=1$ is non-negligible, therefore, we compute the non-negligible part, which is
\begin{equation*}
    \frac{X^{k+1}}{(k+1)!n^4}\cdot\frac{\pi^2(\sqrt{3}\pm 1)}{24}\sum_{abc=n}\mu(b)a^22^{\omega(ab)} = \frac{X^{k+1}}{(k+1)!}\cdot\frac{\pi^2(\sqrt{3}\pm 1)}{24}\cdot \frac{1}{n^2}\prod_{q\mid n}\left(2-q^{-2}\right). 
\end{equation*}
After undoing the Datskovsky--Wright diagonalisation, we see that this cancels against the main term coming from Proposition \ref{redcountprop}.

\subsection{Estimating the shifted integral}
We now turn our attention to the shifted integral in \eqref{Jtot}. Using a result by Mori \cite{Mori}, see also \cite[Theorem 11]{TTOrb}, we have that
\begin{equation}\label{thetapFouriertransf}
  \widehat{\theta_p} = \frac{1}{p}\left(1-\frac{1}{p}\right)\mathbf{1}_{p,(1^3)}-\frac{1}{p^2}\left(1-\frac{1}{p}\right)\mathbf{1}_{p,(1^21)} + \frac{1}{p^3}\left(2\cdot\mathbf{1}_{p,(111)}-\mathbf{1}_{p,(3)}\right),
\end{equation}
and we write
\begin{equation*}
    \widehat{\theta_p} =: \frac{\mathbf{1}_{p,(1^3)}}{p} + r_p,
\end{equation*}
for an appropriate function $r_p$.
Applying the functional equation \eqref{genfcnal}, we may rewrite the shifted integral as
\begin{equation*}
    \frac{\pm 3}{2\pi i}\int_{(1-\epsilon)}n^{4s-4}a^{2-2s}(abp)^{4s}\frac{\Delta^{\pm}(s)}{\Delta^{\pm}(1-s)}\xi_{\pm}(s,\widehat{\theta_p}\otimes \widehat{\eta_{ab}})\frac{\Gamma(1-s)}{\Gamma(k+2-s)}X^{k+1-s}ds,
\end{equation*}
where we also recall that the Shintani zeta function above only counts forms which have at least a triple root modulo $3$. 
Next, we separate the part involving $r_p$ from the integral above and study
\begin{equation*}
    \frac{\pm 3}{2\pi i}\int_{(1-\epsilon)}n^{4s-4}a^{2-2s}(abp)^{4s}\frac{\Delta^{\pm}(s)}{\Delta^{\pm}(1-s)}\xi_{\pm}(s,r_p\otimes \widehat{\eta_{ab}})\frac{\Gamma(1-s)}{\Gamma(k+2-s)}X^{k+1-s}ds.
\end{equation*}
Changing variables $s\mapsto 1-s$ yields
\begin{equation*}
    \frac{\pm 3}{2\pi i}\int_{(\epsilon)}n^{4s}a^{2s}(abp)^{4-4s}\frac{\Delta^{\pm}(1-s)}{\Delta^{\pm}(s)}\xi_{\pm}(1-s,r_p\otimes \widehat{\eta_{ab}})\frac{X^{k+s}}{s(s+1)...(s+k)}ds.
\end{equation*}
We now shift this integral back to $\Re(s)=1+\epsilon$. First, using Mori's results once again, we find that the inverse transform $r_p'$ of $r_p$ is
\begin{equation*}
    \ll p\mathbf{1}_{p,(0)}+\mathbf{1}_{p,(1^3)}+\mathbf{1}_{p,(1^21)}+\frac{1}{p}\left(\mathbf{1}_{p, (111)}+\mathbf{1}_{p,(21)}+\mathbf{1}_{p,(3)}\right).
\end{equation*}
Hence, applying the functional equation again, the integral above equals
\begin{equation*}
    \frac{1}{2\pi i}\int_{(\epsilon)}n^{-4s}a^{2s}\xi_{\pm}(s,r'_p\otimes \eta_{ab})\frac{X^{k+s}}{s(s+1)...(s+k)}ds.
\end{equation*}
We now shift the contour to $\Re(s)=1+\epsilon$, picking up residues at $s=1$ and $s=5/6$. The shifted integral can be reinterpreted as a sum, using Perron's formula. The bound for $r_p'$ above shows that the shifted integral contributes only a term $o(1)$ to the one-level density. Indeed, the contributions coming from the indicator functions of unramified splitting types are bounded trivially. One bounds the contribution coming from $\mathbf{1}_{p,(0)}$ by using that
\begin{equation*}
    \xi_{\pm}(s,\mathbf{1}_{p,(0)}\otimes \eta_{ab}) = p^{-4s}\xi_{\pm}(s,\eta_{ab}).
\end{equation*}
One bounds the contribution from the totally ramified splitting type by using \cite[Proposition 4.5]{BTT} and for $\mathbf{1}_{p,(1^21)}$, one interchanges the order of summation in the one-level density to first sum over $p$, similar to \eqref{rambound}. Finally, we remark that the contribution from the various residues are bounded as in Section \ref{ressect}, using \cite[Theorem 2.4]{BTT}.

We now turn our attention to the remaining parts of the shifted integral, namely
\begin{equation*}
    \frac{\pm 3p^{-1}}{2\pi i}\int_{(1-\epsilon)}n^{4s-4}a^{2-2s}(abp)^{4s}\frac{\Delta^{\pm}(s)}{\Delta^{\pm}(1-s)}\xi_{\pm}(s,\mathbf{1}_{p,(1^3)}\otimes \widehat{\eta_{ab}})\frac{\Gamma(1-s)}{\Gamma(k+2-s)}X^{k+1-s}ds.
\end{equation*}
The proof of Theorem \ref{onelevQ} is concluded once we can prove that the contribution from this term to the one-level density is $o(1)$ for $\sigma <1$. Now, again by Mori's results, see also \cite[Proposition 1]{TTOrb}, we have that
\begin{equation*}
    \widehat{\eta_{ab}} = \prod_{q\mid ab}\left((1+q^{-1})\mathbf{1}_{q,(0)}+q^{-1}\mathbf{1}_{q,(1^3)}\right) = \sum_{\ell\mid ab}\frac{\sigma(ab/\ell)}{ab}\mathbf{1}_{(ab/\ell),(0)}\mathbf{1}_{\ell,(1^3)},
\end{equation*}
for $ab$ coprime to $3$. For the case $q=3$, we need the following lemma, whose proof is a slight extension of the proof of \cite[Proposition 1]{TTOrb}.
\begin{lemma}\label{eta3fourier}
    Let $y \in V(\mathbb{Z})^*$. Then, $\widehat{\eta_3}(y)$ depends only on the projection of $y$ to the space
    \begin{equation*}
        V_3:= \frac{\mathbb{Z}}{3\mathbb{Z}}\times \frac{3\mathbb{Z}}{9\mathbb{Z}}\times \frac{3\mathbb{Z}}{9\mathbb{Z}}\times \frac{\mathbb{Z}}{3\mathbb{Z}}.
    \end{equation*}
    In fact, $\GLtwo(\mathbb{F}_3)$ acts on $V_3$, and $\widehat{\eta_3}(y)$ depends only on the orbit of $y$ under this action. More precisely,
    \begin{equation*}
        \widehat{\eta_3}(y) = \begin{cases}
            1+3^{-1}, &\text{ if $y\in \mathcal{O}_0:=\{0\}\subseteq V_3$},\\
            3^{-1}, &\text{ if $y$ lies in the orbit $\mathcal{O}_1$ of $(1,0,0,0)\in V_3$,}\\
            0, &\text{ else.}
        \end{cases}
    \end{equation*}
\end{lemma}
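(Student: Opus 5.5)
The plan is to compute $\widehat{\eta_3}(y)$ directly from the definition of the Fourier transform with $m=3$, exploiting the twisted pairing $[f,g]$. The only subtlety relative to \cite[Proposition 1]{TTOrb} is that the middle coefficients carry a factor $1/3$, so the pairing is not well defined modulo $3$ in those slots. First we check which projection of $y$ matters. For $y=(y_1,y_2,y_3,y_4)\in V(\mathbb{Z})^*$ we have $3\mid y_2,y_3$, so $[y,g]/3 = y_1g_1/3 + (y_2/3)g_2/3+(y_3/3)g_3/3+y_4g_4/3$. The exponential $\exp(2\pi i[y,g]/3)$ is therefore well defined for $g\in V(\mathbb{Z}/3\mathbb{Z})$, and it depends only on $y_1,y_4\nsmod 3$ and on $y_2/3,y_3/3\nsmod 3$. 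Equivalently, it depends on $y_2,y_3\nsmod 9$ inside $3\mathbb{Z}/9\mathbb{Z}$, which gives dependence only on the image in $V_3$.

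Next we set up the group action. Identify $V_3$ with $\mathbb{F}_3^4$ via $(y_1,y_2/3,y_3/3,y_4)$. The pairing then becomes the standard dot product on $\mathbb{F}_3^4$. Since $\eta_3$ is $\GLtwo(\mathbb{F}_3)$-invariant, the transform is invariant under the contragredient action with respect to this pairing. Concretely, $\GLtwo(\mathbb{F}_3)$ acts on $V_3$ through the dual representation of $V(\mathbb{F}_3)$ under the pairing, and $\widehat{\eta_3}$ is constant on its orbits by the usual change of variables $g\mapsto \gamma g$ in the defining sum. The transposed action can be written down explicitly from the formulae in \cite[Section 2]{TT1}. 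In characteristic $3$ this dual module is not isomorphic to $V(\mathbb{F}_3)$, and this is exactly why the twisted statement is needed.

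The computation itself uses $\eta_3(g)=\sum_{r\in\mathbb{P}^1(\mathbb{F}_3)}\mathbf{1}_{g(r)=0}$ for $g\neq 0$, with $\eta_3(0)=4$. We then swap sums:
\[
\widehat{\eta_3}(y)=\frac{1}{81}\sum_{r\in\mathbb{P}^1(\mathbb{F}_3)}\sum_{g:\,g(r)=0}\exp\left(2\pi i\frac{[y,g]}{3}\right).
\]
For each root $r$, the forms vanishing at $r$ form a $3$-dimensional subspace $H_r$ (the zero form is included consistently). The inner sum is therefore $27$ if $y\perp H_r$ and $0$ otherwise. Hence $\widehat{\eta_3}(y)=\tfrac{1}{3}\#\{r: y\in H_r^\perp\}$.

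It remains to identify the lines $H_r^\perp$. For $r=(1:0)$ the subspace $H_r$ is $\{g_1=0\}$, so $H_r^\perp$ is spanned by $(1,0,0,0)$. The other three lines are its translates under $\GLtwo(\mathbb{F}_3)$. The four lines meet pairwise only in $0$, because distinct roots impose independent conditions. This gives the value $4/3=1+3^{-1}$ at $y=0$ and $1/3$ on $\mathcal{O}_1$. Here $\mathcal{O}_1$ is the union of the nonzero points of these lines, i.e. the orbit of $(1,0,0,0)$, using transitivity of $\GLtwo(\mathbb{F}_3)$ on $\mathbb{P}^1(\mathbb{F}_3)$ together with the scalars. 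Every other $y$ gives $0$.

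The main obstacle I expect is the bookkeeping of the dual action and of the identification $3\mathbb{Z}/9\mathbb{Z}\cong\mathbb{F}_3$. We must check that the translates of $(1,0,0,0)$ are exactly $H_r^\perp$ in these twisted coordinates, rather than the naive $\mathbf{1}_{(1^3)}$ locus as in the case $q\neq3$.
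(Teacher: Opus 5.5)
Your proof is correct, but it organises the computation differently from the paper. The paper works on the dual side, following \cite[Proposition 1]{TTOrb}. It computes $3^4\widehat{\mathbf{1}_{\mathcal{O}_1}}(x)=\sum_{y\in\mathcal{O}_1}\langle x,y\rangle$ by rewriting the orbit sum as an average over $\GLtwo(\mathbb{F}_3)$. For this it uses that the stabiliser of $(1,0,0,0)$ has order $6$, and that each root of $x$ is moved to $v=0$ by exactly $12$ elements $g$. This yields the identity $3^4\widehat{\mathbf{1}_{\mathcal{O}_1}}=3\eta_3-4$, and Fourier inversion then gives $\widehat{\eta_3}$.

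You instead decompose $\eta_3=\sum_{r\in\mathbb{P}^1(\mathbb{F}_3)}\mathbf{1}_{H_r}$ and use that the transform of a subspace indicator is a multiple of the indicator of its annihilator. This gives $\widehat{\eta_3}(y)=\frac13\#\{r: y\in H_r^\perp\}$ immediately, with no stabiliser count and no inversion step.

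Your route also makes the orbit explicit. In your rescaled coordinates, $H_{(\alpha:\gamma)}^\perp$ is spanned by $(\alpha^3,\alpha^2\gamma,\alpha\gamma^2,\gamma^3)$, i.e.\ by the image of $(\alpha u+\gamma v)^3$ in $V_3$. The orbit of $u^3$ visibly consists of the nonzero cubes $(\alpha u+\gamma v)^3$, so this settles the ``bookkeeping'' you flagged by direct computation. It also proves Remark \ref{remarkorbit}, and the same computation for a prime $q\neq 3$ recovers $\widehat{\eta_q}=(1+q^{-1})\mathbf{1}_{q,(0)}+q^{-1}\mathbf{1}_{q,(1^3)}$.

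In exchange, the paper's version gives the explicit relation $3^4\widehat{\mathbf{1}_{\mathcal{O}_1}}=3\eta_3-4$. This expresses $\eta_3$ through the dual orbit, in the same shape as the result of \cite{TTOrb} that it extends.
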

\begin{proof}
    Let 
    \begin{equation*}
        \langle x,y\rangle =\mathrm{exp}\left(2\pi i\frac{[x,y]}{3}\right),
    \end{equation*}
    where $[x,y]=x_1y_1+x_2y_2/3+x_3y_3/3+x_4y_4$ satisfies $[gx,g^Ty]=[x,y]$, cf. \cite[Section 3]{TTOrb}. This property already implies that the Fourier transform only depends on the orbit. Denote the orbit of $(1,0,0,0)$ by $\mathcal{O}_1$. Now, similarly to the proof of \cite[Proposition 1]{TTOrb}, one may consider the Fourier transform of the indicator function of $\mathcal{O}_1$. We find that
    \begin{equation*}
        3^4\widehat{\mathbf{1}_{\mathcal{O}_1}}(x) = \sum_{y\in V_3\cap \mathcal{O}_1}\langle x,y \rangle.
    \end{equation*}
    A short computation shows that the stabiliser of $(1,0,0,0)$ has order $6$, whence the above equals
    \begin{equation*}
        \frac{1}{6}\sum_{g\in \GLtwo(\mathbb{F}_3)}\langle x,g^{-T}(1,0,0,0) \rangle = \frac{1}{6}\sum_{g\in \GLtwo(\mathbb{F}_3)}\langle gx,(1,0,0,0) \rangle =\frac{1}{12}\sum_{g\in \GLtwo(\mathbb{F}_3)}\sum_{t=\pm 1}\langle gx,(t,0,0,0) \rangle.
    \end{equation*}
    Now, the innermost sum above equals $2$ if and only if $gx$ has a root at $v=0$, else it equals $-1$. For every root of $x$, $gx$ will have a root at $v=0$ for precisely $\#\GLtwo(\mathbb{F}_3)/4=12$ choices of $g$. Hence,
    \begin{equation*}
        3^4\widehat{\mathbf{1}_{\mathcal{O}(1)}}(x) = 2\eta_3(x)+ (\eta_3(x)-4)=3\eta_3(x)-4.
    \end{equation*}
    Applying the inverse transform gives the desired formula for $\widehat{\eta_3}(x)$.
\end{proof}
\begin{remark}\label{remarkorbit}
    Unlike the case $q\neq 3$, it is not true that the orbit of $(1,0,0,0)$ consists of all the $(1^3)$-points modulo $3$. Instead, this orbit consists only of elements of the type $(\alpha^3,3\alpha^2\gamma,3\alpha\gamma^2,\gamma^3)$ for $\alpha,\gamma$ not both congruent to zero modulo $3$.
\end{remark}
\begin{remark}\label{remarkdisc3}
    For $y$ which are maximal, lying in $\mathcal{O}_1$ is equivalent to the $3$-part of $\mathrm{Disc}(y)$ equalling $3^5$, see \cite[Proposition 8.20]{TT1}.
\end{remark}
In the general case we then have that
\begin{equation*}
    \widehat{\eta_{ab}} = \sum_\sumstack{\ell\mid ab/(ab,3)\\ \ell_3\mid (3,ab)}\frac{\sigma(ab/\ell\ell_3)}{ab}\mathbf{1}_{ab/(\ell(3,ab)),(0)}\mathbf{1}_{\ell,(1^3)}\mathbf{1}_{(3,ab)/\ell_3,\mathcal{O}_0}\mathbf{1}_{\ell_3,\mathcal{O}_1}.
\end{equation*}
Using the above, we may expand
\begin{equation*}
    \xi_{\pm}(s, \mathbf{1}_{p,(1^3)}\otimes \widehat{\eta_{ab}}) = \sum_\sumstack{\ell\mid ab/(ab,3)\\ \ell_3\mid (3,ab)}\frac{\sigma(ab/\ell\ell_3)}{ab}\xi_{\pm}(s, \mathbf{1}_{p\ell,(1^3)} \otimes \mathbf{1}_{(3,ab)/\ell_3,\mathcal{O}_0}\otimes \mathbf{1}_{\ell_3,\mathcal{O}_1} \otimes \mathbf{1}_{ab/(\ell(3,ab)),(0)}).
\end{equation*}
We now recall that the zeta function above only counts forms with at least a triple root modulo $3$. Hence, we write the above as
\begin{equation*}
\begin{split}
   \sum_\sumstack{\ell\mid ab/(ab,3)\\ \ell_3\mid (3,ab)}&\frac{\sigma(ab/\ell\ell_3)}{ab}\xi_{\pm}(s, \mathbf{1}_{3p\ell/(3,ab),(1^3)} \otimes \mathbf{1}_{(3,ab)/\ell_3,\mathcal{O}_0}\otimes \mathbf{1}_{\ell_3,\mathcal{O}_1} \otimes \mathbf{1}_{ab/(\ell(3,ab)),(0)})\\&=\sum_\sumstack{\ell\mid ab/(ab,3)\\ \ell_3\mid (3,ab)}\left(\frac{\ell\ell_3}{ab}\right)^{4s}\frac{\sigma(ab/\ell\ell_3)}{ab}\xi_{\pm}(s, \mathbf{1}_{3p\ell/\ell_3,(1^3)} \otimes \mathbf{1}_{\ell_3,\mathcal{O}_1} ).
\end{split}
\end{equation*}
We shift the integral involving this zeta function to $\Re(s)=1+\epsilon$. Note that we also pick up a residue at $s=1$, but we shall not need to compute this residue explicitly. 

Next, we separate
\begin{equation*}
    \xi_{\pm}(s, \mathbf{1}_{3p\ell/\ell_3,(1^3)} \otimes \mathbf{1}_{\ell_3,\mathcal{O}_1} ) = \xi_{\pm}^{S_3}(s, \mathbf{1}_{3p\ell/\ell_3,(1^3)} \otimes \mathbf{1}_{\ell_3,\mathcal{O}_1} )+\xi_{\pm}^{S_3^c}(s, \mathbf{1}_{3p\ell/\ell_3,(1^3)} \otimes \mathbf{1}_{\ell_3,\mathcal{O}_1} ).
\end{equation*}
We may bound the integral over the zeta function associated with non-$S_3$ fields using crude bounds. Indeed, we may bound absolutely using the general fact that for a zeta function $Z(s)$, with $s$ lying inside its domain of convergence, one has
\begin{equation*}
    Z(s)= s\int_{1}^\infty \frac{A_Z(t)}{t^{s+1}}dt,
\end{equation*}
where $A_Z(t)$ is the summatory function of $Z$. The summatory function associated with $\xi_{\pm}^{\mathrm{S_3^c}}(s, \mathbf{1}_{p,(1^3)}\otimes \widehat{\eta_{ab}})$ is bounded in absolute value by  the summatory function $A_{p,S_3^c}(t)$ of $\xi_{\pm}^{\mathrm{S_3^c}}(s, \mathbf{1}_{p,(1^3)})$. Combining \eqref{suborderctdw} with \cite[Lemma 4.1]{BTT} shows that the $C_3$-contribution to $A_{p,S_3^c}(t)$ is
\begin{equation}\label{c3sumbd}
    \ll t^{1/2}/p^{1-\epsilon}.
\end{equation}
Next, we study the contribution from forms with precisely one root. Without loss of generality, we may consider orbits under $\SLtwo(\mathbb{Z})$ instead of $\GLtwo(\mathbb{Z})$. A change of variables brings these cubic forms into the shape
\begin{equation}\label{redshape}
    u(a_1u^2+a_2uv+a_3v^2),
\end{equation}
with $0 \leq a_2<2\abs{a_3}$ and $a_3 >  0$. This form having a triple root modulo $p$ translates into the conditions $p\mid a_3$ and $p\mid a_2$. The discriminant of such a form equals
\begin{equation*}
    \ a_3^2(a_2^2-4a_1a_3).
\end{equation*}
Through elementary computations, one checks that the number of $a_1,a_2,a_3$ satisfying these conditions is 
\begin{equation*}
    \ll t/p^3.
\end{equation*}
In fact, if we also require that $a_2^2\neq 4a_1a_3$, then one may compute the number more precisely as
\begin{equation}\label{redcount}
     d_1\frac{t}{p^3}+\mathcal{O}\left(\frac{t^{2/3}}{p^2}\right),
\end{equation}
for a constant $d_1$. See also Lemma \ref{redshintlemma} for a more accurate result. For future reference, we note that the above asymptotic also bounds the corresponding residue at $s=1$ by $\ll p^{-3}$. We remark that one studies forms with three roots similarly.

These bounds are already sufficient to show that the contribution from the shifted integral over $\xi_{\pm}^{\mathrm{S_3^c}}(s, \mathbf{1}_{p,(1^3)}\otimes \widehat{\eta_{ab}})$ to the one-level density is negligible, for $\sigma < 1$. In fact, it turns out the zeta function $\xi_{\pm}^{S_3^c}(s, \mathbf{1}_{3p\ell/\ell_3,(1^3)} \otimes \mathbf{1}_{\ell_3,\mathcal{O}_1} )$ can be analytically continued past the line $\Re(s)=1$. Indeed, this essentially follows from the estimate \eqref{redcount},  in addition to an argument taking care of the $\mathbb{Q}^3$-rings. Having established the above bounds, and the analytic continuation to $\Re(s) > 2/3$, we conclude that the contribution to the one-level density, from the part of the residue we picked up at $s=1$ coming from $\xi_{\pm}^{\mathrm{S_3^c}}(s, \mathbf{1}_{p,(1^3)}\otimes \widehat{\eta_{ab}})$, is also negligible when $\sigma < 1$.

One could perform similar estimations for studying the zeta function associated with $S_3$-fields, using that the number of cubic fields which are totally ramified at $p$ of discriminant bounded by $t$ is $\ll t/p^2$, and this would be sufficient to allow for support $\sigma < 2/3$. However, to allow for support $\sigma < 1$, we need to study this term in a different manner.

\subsection{Counting subrings with local conditions}
We now study $\xi_{\pm}^{S_3}(s, \mathbf{1}_{3p\ell/\ell_3,(1^3)} \otimes \mathbf{1}_{\ell_3,\mathcal{O}_1} )$ by switching to maximal overrings. For this, we need to extend \eqref{suborderctdw} to handle certain local conditions. First, given an $S_3$-cubic field $K$ and a squarefree $d$, we wish to study the zeta function
\begin{equation}\label{zetasubram}
    \sum_\sumstack{\mathcal{O}\subseteq \mathcal{O}_K }\frac{\mathbf{1}_{d,(1^3)}}{[\mathcal{O}_K:\mathcal{O}]^s}.
\end{equation}
Here, $\mathbf{1}_{(1^3),d}$ counts only orders which correspond to forms of type $(1^3)$ modulo every $q\mid d$. By multiplicativity, studying this zeta function amounts to studying it locally at some prime. For primes coprime to $d$, we obtain the same local factors as in \eqref{suborderctdw} so that we do not need to consider these. Hence, we restrict ourselves to a prime $q\mid d$.

Our goal is then to find the number of index-$q^{m}$ suborders of $K$ with splitting type $(1^3)$ at $q$. We study this number by using a criterion from the proof of \cite[Lemma 13]{BST} together with \cite[Eq. (10)]{BST}. Let $\langle 1,\omega,\theta\rangle$ be a normal basis of $\mathcal{O}_K$, i.e. a basis such that $\omega\theta \in \mathbb{Z}$, and let $y = (a,b,c,d)$ be the binary cubic form corresponding to this choice of basis. The fourth coordinate $d$ of $y$ should not be confused with the modulus of $\mathbf{1}_{d,(1^3)}$. Consider the $\mathbb{Z}$-module
\begin{equation*}
    \mathbb{Z} \oplus \mathbb{Z}(q^i\omega)\oplus \mathbb{Z}(q^j\theta)
\end{equation*}
and note that without loss of generality, we may assume that $i\geq j$. We call such a ring an $(i,j)$-subring. This module forms a ring if and only if
\begin{equation*}
    d\equiv 0 \nsmod{q^{i-2j}},
\end{equation*}
and the form corresponding to the ring with basis $\langle 1,q^i\omega,q^j\theta\rangle$ is $(q^{2i-j}a,q^ib,q^jc,q^{2j-i}d)$. As $i\geq 1$ and $i\geq j$, this new form automatically has at least a double root at $v=0$. Hence, it is of type $(1^3)$ if and only if $i\geq2j$, $d\not\equiv 0\nsmod q^{i-2j+1}$, and either $c\equiv0 \nsmod{q}$, or $j\geq 1$.

Now, we also need to consider the question of when making a change of basis to $\langle1,\omega',\theta'\rangle$ gives a different subring $\langle 1,q^i\omega',q^j\theta'\rangle$.  If one writes $\omega' = \alpha\omega+\beta\theta$ and $\theta' = \gamma\omega+\delta\theta$, then $\langle 1,q^i\omega',q^j\theta'\rangle$ is a ring if and only if the form corresponding to the normal basis $\{1,\omega',\theta'\}$ has a root at $(0:1) \nsmod{q^{i-2j}}$, i.e. if and only if 
\begin{equation}\label{welldefeqsubord}
    y(\gamma,\delta) = \gamma^3a+\gamma^2 \delta b+\gamma \delta^2c+\delta^3d\equiv 0\nsmod{q^{i-2j}},
\end{equation}
as this is the fourth coordinate of this form. Note that the above condition is precisely the condition that $(\gamma:\delta)$ is a root of $y$ modulo $q^{i-2j}$. A short computation shows that the two rings $\langle 1,q^i\omega',q^j\theta'\rangle$ and $\langle 1,q^i\omega,q^j\theta\rangle$ are equal precisely when $\gamma\equiv 0\nsmod{q^{i-j}}$. In particular, this shows that subrings associated with different roots are not equal, when $i > 2j$. Hence, we see that it suffices to consider
\begin{equation*}
    \begin{pmatrix}
        \alpha & \beta \\ 
        \gamma & \delta
    \end{pmatrix} \nsmod{q^{i-j}}.
\end{equation*}
When $i> j$, there are $q^{i-j}(q^{i-j})^2(1-q^{-1})^2$ matrices in $\GLtwo(\mathbb{Z}/q^{i-j}\mathbb{Z})$ with $\gamma\equiv 0\nsmod{q^{i-j}}$. As
\begin{equation*}
    \# \GLtwo(\mathbb{Z}/q^{i-j}\mathbb{Z}) = q^{4i-4j}(1-q^{-1})(1-q^{-2}),
\end{equation*}
we see that there are $q^{i-j}(1+q^{-1}) =q^{j}(1+q^{-1}) $ $(i,j)$-subrings when $i=2j$. For the case $i > 2j$, we note that there are $q^{j}q^{i-j}(q^{i-j})^2(1-q^{-1})^2$ matrices in $\GLtwo(\mathbb{Z}/q^{i-j}\mathbb{Z})$ which fix the root $(0:1) \nsmod{q^{i-2j}}$, as these are precisely the matrices with $\gamma\equiv 0\nsmod{q^{i-2j}}$. Hence, there are $q^j$ distinct subrings associated with each root of $y$.

Finally, we elaborate on the connection between roots modulo $q$ and roots modulo a power $q^{\ell}$. If $y$ is a form, then each simple root lifts uniquely to a root modulo $q^{\ell}$, by Hensel's lemma. If $y$ has a root of at least multiplicity $2$ at $(0:1)$, then it is not possible for the fourth coordinate to be congruent to $0 \nsmod{q^2}$ as $y$ is maximal, see \cite[Lemma 13]{BST}. We remark that using the information above, one may reprove \eqref{suborderctdw}.

We now compute the local factor at $q\mid d$ of the zeta function above by considering the various possibilities for $i$ and $j$. First, there is the trivial case when $j=i=0$, which contributes to the sum only if $\mathcal{O}_K$ is totally ramified at $q$. Next, if $j=0$ and $i \geq 1$, then the conditions on $c$ and $d$ mean that the form has at least a double root at $u=0$, which should lift to a root modulo $q^{i-2j}$. As $y$ is maximal, this is only possible if $i=1$.

It remains to handle the case when $j\geq 1$. Then there is no condition on the coefficient $c$. In the case $i=2j$, the condition on $d$ is equivalent to the form not having a root at $u=0$. Making the change of basis described above, the last coefficient of the resulting form is $y(\gamma,\delta)$, so that the condition on the fourth coefficient translates to the condition that $(\gamma,\delta)$ should not be a root of $y$. We then obtain $q^{j}(1+q^{-1}-q^{-1}\eta_q(y))$ distinct subrings.

In the final case $i > 2j$, we require that 
\begin{equation*}
    y(\gamma,\delta)\equiv 0\nsmod{q^{i-2j}},\,\,\, y(\gamma,\delta)\not\equiv 0 \nsmod{q^{i-2j+1}}.
\end{equation*}
A simple root lifts uniquely so that there are $q^{j}(1-q^{-1})$ distinct subrings associated with every such root. For the multiple roots, there is no lift to higher prime powers, so that these only contribute when $i=2j+1$, and the second condition above is then automatically satisfied. A computation now proves the following lemma.
\begin{lemma}\label{totramsubordct}
    The local factor at a prime $q\mid d$ of the zeta function \eqref{zetasubram} is given by
    \begin{equation*}
    \frac{1}{(1-q^{1-3s})(1-q^{-s})}\cdot 
    \begin{cases}
        1- q^{-2s}, &\text{ for $q$ of splitting type $(1^3)$ in $K$,}\\
        q^{-s}\big(1-q^{-s}+q^{-2s}(q-1)\big), &\text{ for $q$ of splitting type $(1^21)$ in $K$,}\\
        q^{-3s}(1-q^{-s})(q+1), &\text{ for $q$ of splitting type $(3)$ in $K$,}\\
        q^{-3s}(q-q^{-s}), &\text{ for $q$ of splitting type $(21)$ in $K$,}\\
        q^{-3s}\big(q-2+2q^{1-s}-q^{-s}\big), &\text{ for $q$ of splitting type $(111)$ in $K$,}\\
    \end{cases}
    \end{equation*}
\end{lemma}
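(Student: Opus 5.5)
The plan is to assemble the local factor as a generating series in $q^{-s}$, where $[\mathcal{O}_K:\mathcal{O}]=q^{i+j}$ for an $(i,j)$-subring. We sum over the admissible pairs $(i,j)$ isolated in the discussion preceding the statement, weighted by the counts derived there. Fix a normal basis with form $y$ maximal at $q$. The splitting type of $q$ in $K$ then determines two things: $\eta_q(y)\in\{3,1,0,2,1\}$ for the types $(111),(21),(3),(1^21),(1^3)$, and the multiplicity structure of the roots of $y$ modulo $q$, namely whether each root is simple or repeated. We may rotate the basis so that, when $y$ has a multiple root, it sits at $(0:1)$.

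We collect the contributions in four groups. First, $(i,j)=(0,0)$ contributes $1$ exactly when $q$ has type $(1^3)$. Second, $(i,j)=(1,0)$ contributes $q^{-s}$ times the number of multiple roots. Here the form must have at least a double root at the relevant point, and maximality forbids lifting, so this term is $q^{-s}$ for types $(1^3)$ and $(1^21)$ and $0$ otherwise. Third, the terms $i=2j$ with $j\ge1$ give
\[
\sum_{j\ge1}q^{-3js}q^j\bigl(1+q^{-1}-q^{-1}\eta_q(y)\bigr)=\frac{q^{1-3s}\bigl(1+q^{-1}-q^{-1}\eta_q(y)\bigr)}{1-q^{1-3s}}.
\]
Fourth, the terms $i>2j$ with $j\ge1$ split by root type. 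Each simple root contributes $q^j(1-q^{-1})$ for every $i\ge 2j+1$, giving
\[
\sum_{j\ge1}q^{j-3js}(1-q^{-1})\frac{q^{-s}}{1-q^{-s}}.
\]
Each multiple root contributes $q^j$ only when $i=2j+1$, giving $\sum_{j\ge1}q^{j}q^{-(3j+1)s}$.

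One point needs checking before relying on these counts. When $j=0$ and $i\ge2$, the $(1^3)$ condition at $u=0$ forces a multiple root to lift to modulus $q^{i}$, which is impossible by \cite[Lemma 13]{BST}. For the $j\ge1$ cases, I would double check the counts $q^j(1-q^{-1})$ and $q^j$ against the matrix counts modulo $q^{i-j}$ given above.

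Finally, in each of the five cases we insert the root data: three simple roots for $(111)$, one for $(21)$, none for $(3)$, one simple and one double for $(1^21)$, and one triple for $(1^3)$. We then put everything over the common denominator $(1-q^{1-3s})(1-q^{-s})$. As a consistency check, type $(3)$ should reduce to $q^{1-3s}(1+q^{-1})(1-q^{-s})$, which matches the claimed $q^{-3s}(q+1)(1-q^{-s})$. The main obstacle is bookkeeping in the ramified cases. In the $(1^21)$ case the simple root and the double root behave differently in the $i>2j$ family, and the double root also enters the $(1,0)$ term. In the $(1^3)$ case the triple root contributes through the $(0,0)$, $(1,0)$, $i=2j$ and $i=2j+1$ terms, and these must combine to $1-q^{-2s}$. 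I would verify these two cases by expanding to low order in $q^{-s}$, comparing with direct enumeration of low-index subrings, and checking consistency with \eqref{suborderctdw} after summing over all splitting conditions.
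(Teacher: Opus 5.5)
Your proposal is correct and takes the same approach as the paper: sum $q^{-(i+j)s}$ over the admissible $(i,j)$-subrings using the counts from the preceding discussion. The paper writes this out only for type $(111)$, whereas your outline covers all five types. Your remaining bookkeeping for the ramified types does close up as claimed: for $(1^21)$ your four groups sum to $\bigl(q^{-s}-q^{-2s}+(q-1)q^{-3s}\bigr)/\bigl((1-q^{1-3s})(1-q^{-s})\bigr)$, and for $(1^3)$ they sum to $(1+q^{-s})/(1-q^{1-3s})$.
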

\begin{proof}
    The proof is a straightforward computation, using the discussion above. We consider the case when $q$ has splitting type $(111)$ in $K$, and omit the other cases. 
    
    In the case $(111)$, the form $y$ has three distinct roots modulo $q$. Then, the discussion above shows that only $j\geq 1$ contributes to the sum, and that the local factor is given by
    \begin{equation*}
        \sum_{j\geq 1}q^{-js}\left(q^{-2js}q^j(1-2q^{-1})+3\sum_{i\geq 2j+1}q^{-is}q^{j}(1-q^{-1})\right),
    \end{equation*}
    which gives the desired factor. The other splitting types are handled similarly.
\end{proof}

At $q=3$, we see from Lemma \ref{eta3fourier} that we need to consider a slightly more refined suborder-counting zeta function. Specifically, we modify \eqref{zetasubram} into the zeta function
\begin{equation}\label{suborderzeta3}
    \sum_\sumstack{\mathcal{O}\subseteq \mathcal{O}_K }\frac{\mathbf{1}_{(1^3),d}\mathbf{1}_{\mathcal{O}_1,e}}{[\mathcal{O}_K:\mathcal{O}]^s},
\end{equation}
where $e=1$ or $3$, with the definition that $\mathbf{1}_{\mathcal{O}_1,1}$ always equals $1$ and that $\mathbf{1}_{\mathcal{O}_1,3}$ is the indicator function of forms which are of type $(1^3)$ at $3$, and whose reduction to
\begin{equation}\label{projspace3}
    \frac{\mathbb{Z}}{3\mathbb{Z}} \times \frac{3\mathbb{Z}}{9\mathbb{Z}}\times \frac{3\mathbb{Z}}{9\mathbb{Z}}\times \frac{\mathbb{Z}}{3\mathbb{Z}}
\end{equation}
is nonzero, and of the form $(\alpha^3,3\alpha^2\gamma,3\alpha\gamma^2,\gamma^3)$ for $\alpha,\gamma \in \mathbb{F}_3$, see \eqref{eta3fourier} and Remark \ref{remarkorbit}.

As above, if $\mathcal{O}_K$ is represented by the form $(a,b,c,d)$ then its $(i,j)$-subring is represented by the form $(3^{2i-j}a,3^ib,3^jc,3^{2j-i}d)$. The form $(a,b,c,d)$ lying in $\mathcal{O}_1$ is equivalent to $3^5\mid \mathrm{Disc}(K)$, see Remark \ref{remarkdisc3}. Assuming $i\geq 1$ and $i\geq j$, we have that $3\mid 3^{2i-j}$ so that we only need to study which forms reduce to $(0,0,0,\gamma^3)=(0,0,0,\gamma)$ in \eqref{projspace3}, with $\gamma \neq 0$. Hence, we need that $i\geq 2j$ and that
\begin{equation*}
    3^ib, 3^j c\equiv 0 \nsmod{9},\,\,\, d\equiv 0 \nsmod{3^{i-2j}}, \,\,\, d\not\equiv 0 \nsmod{3^{i-2j+1}}.
\end{equation*}
As before, $j=0$ requires that $i=1$ (unless $i=0$), by maximality. Then, we need $3\mid b$, $9\mid c$, $3\mid d$ and $9\nmid d$. Maximal forms satisfying these conditions correspond to cubic fields where discriminant is divisible by $3^4$, cf. \cite[Proposition 8.20]{TT1}.

When $j=1$, the only possibility is that $i=2$, or $i=3$, by maximality. When $i=2$, the conditions above become that $3\mid c$ and $3\nmid d$. In the case $i=3$, the condition translates to having at least a double root modulo $3$ at $u=0$. When $j\geq 2$ and $i\geq 2j$ there are no conditions on $b$ or $c$. One obtains the following lemma.
\begin{lemma}\label{3adiclemma}
    When $K$ is totally ramified at $3$, the local factor at $3$ of the zeta function \eqref{suborderzeta3}, with $e=3$ equals
    \begin{equation*}
        \frac{1}{1-3^{1-3s}}\cdot \begin{cases}
            1+3^{-s}, \text{ if $3^5\mid\mid \mathrm{Disc}(K)$},\\
            3^{-s}+3^{1-3s}, \text{ if $3^4\mid\mid \mathrm{Disc}(K)$}, \\
            3^{1-3s}+3^{1-4s},\text{ if $3^3\mid\mid \mathrm{Disc}(K)$},
        \end{cases}
    \end{equation*}
    and the case when $3^2\mid\mid \mathrm{Disc}(K)$ is not possible. For the other splitting types, the local factor is given by
    \begin{equation*}
        \begin{cases}
            3^{-3s}+3^{1-4s}+3^{1-6s}\big(2+3^{1-s}-3^{1-2s}\big)\big(1-3^{-s}\big)^{-1}\big(1-3^{1-s}\big)^{-1}, &\text{ for $K$ of type $(1^21)$},\\
            3^{-3s}\big(1+3^{2-3s}\big)\big(1-3^{-s}\big)^{-1}, &\text{ for $K$ of type $(3)$},\\
            3^{-3s}\big(1-3^{-s}+6\cdot3^{-3s}\big)\big(1-3^{-s}\big)^{-1}\big(1-3^{1-3s}\big)^{-1}, &\text{ for $K$ of type $(21)$},\\
            3^{-3s}\big(1-3^{-s}+18\cdot 3^{-4s}\big)\big(1-3^{-s}\big)^{-1}\big(1-3^{1-3s}\big)^{-1}, &\text{ for $K$ of type $(111)$}.
        \end{cases}
    \end{equation*}
\end{lemma}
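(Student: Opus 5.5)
The plan is to compute the local factor exactly as in the proof of Lemma \ref{totramsubordct}: enumerate the $(i,j)$-subrings $\langle 1,3^i\omega,3^j\theta\rangle$ of $\mathcal{O}_K\otimes\mathbb{Z}_3$, with $i\geq j$. Each such ring has index $3^{i+j}$ and corresponds to the form $(3^{2i-j}a,3^ib,3^jc,3^{2j-i}d)$. For each $(i,j)$ we then count the ones satisfying the $\mathcal{O}_1$-condition, which is the refinement of $(1^3)$ derived just before the lemma: we need $i\geq 2j$, $9\mid 3^ib$ and $9\mid 3^jc$, and $3^{i-2j}\,\|\,d$ after a change of normal basis. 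The number of distinct subrings attached to each admissible basis choice is taken from the earlier discussion: $3^j(1+3^{-1})$ subrings when $i=2j$ (reduced by the non-root condition), and $3^j$ per root of $y$ modulo $3^{i-2j}$ when $i>2j$. The lifting rule is also reused: simple roots lift uniquely by Hensel's lemma, while multiple roots at $(0:1)$ only survive for $i-2j=1$, by maximality \cite[Lemma 13]{BST}. The summation over $j\geq 2$ is geometric and produces the common factor $(1-3^{1-3s})^{-1}$, together with $(1-3^{-s})^{-1}$ and $(1-3^{1-s})^{-1}$ where simple roots allow arbitrarily large $i-2j$.

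For $K$ totally ramified at $3$, I would choose the normal basis so that $y$ has its triple root at $u=0$, i.e. $y\equiv(0,0,0,d)$ modulo $3$. I would then separate the three cases according to the $3$-adic valuation of $\mathrm{Disc}(K)$, using \cite[Proposition 8.20]{TT1} and Remark \ref{remarkdisc3}. These translate valuations into congruence data on $(b,c,d)$: $3^5$ corresponds to $y\in\mathcal{O}_1$, $3^4$ to $3\mid b$, $9\mid c$, $3\,\|\,d$ (up to equivalence), and $3^3$ to the remaining $(1^3)$ configurations. The case $3^2$ is excluded because a totally ramified cubic field at $3$ has $3^3\mid\mathrm{Disc}(K)$. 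Then the small cases are checked one by one. The trivial ring $(0,0)$ contributes $1$ exactly when $3^5$ divides the discriminant. The ring $(1,0)$ contributes $3^{-s}$ exactly when $3^4$ or $3^5$ does. For $j=1$ we have $i\in\{2,3\}$, giving terms $3^{1-3s}$ and $3^{1-4s}$ whose presence depends on whether $3\mid c$ and on the root structure at $u=0$. For $j\geq2$ the conditions on $b,c$ are vacuous and $i=2j$ or $2j+1$ only. Collecting these should reproduce $1+3^{-s}$, $3^{-s}+3^{1-3s}$ and $3^{1-3s}+3^{1-4s}$ times $(1-3^{1-3s})^{-1}$, where the tail $j\geq 2$ reproduces the $j=1$ pattern scaled by $3^{1-3s}$.

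For the unramified and partially ramified types, $j=0$ is impossible, since the form needs a root of multiplicity $\geq2$ at $(0:1)$. The sum therefore starts at $j=1$ with the conditions specialised to $3$, and the counting mirrors the $(111)$ computation in Lemma \ref{totramsubordct} with $q=3$. The differences are as follows. For $j=1$, $i=2$ we need $3\mid c$ in addition to the non-root condition, and the $9\mid 3^jc$ requirement cuts certain terms when $j=1$. These account for the modified coefficients $6\cdot 3^{-3s}$, $18\cdot3^{-4s}$ and $3^{2-3s}$.

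The main obstacle is this $j=1$ bookkeeping. The condition $3\mid c$ is not invariant under the $\GLtwo(\mathbb{Z}/3^{i-j})$ changes of basis used to count distinct subrings, so one must count the pairs $(\gamma,\delta)$ for which the transformed form has both its fourth coordinate and its third coordinate vanishing to the right order. I would handle this by counting lines $(\gamma:\delta)$ modulo $9$ where the transformed form has at least a double root, using the root structure of $y$ modulo $3$ for each splitting type. I would then verify the final expressions by checking that summing over all five types, weighted by the local densities of cubic fields at $3$, is consistent with Lemma \ref{eta3fourier}.
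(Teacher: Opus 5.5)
Your framework is the paper's. You enumerate the $(i,j)$-subrings, impose $i\geq 2j$, $9\mid 3^ib$, $9\mid 3^jc$ and $3^{i-2j}\,\|\,d$, and reuse the root-lifting counts behind Lemma \ref{totramsubordct}. The totally ramified case works this way. Once the triple root is at $(0:1)$, every transformed form $gy$ has third coordinate $\equiv 0 \pmod 3$. So the $j=1$ terms $3^{1-3s}+3^{1-4s}$ are always present, not conditionally as you suggest, and the three discriminant cases differ only in the $j=0$ terms.

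The gap is the one genuinely new step, which you flag but do not resolve: counting the $(2,1)$-subrings satisfying the $\mathcal{O}_1$-condition when $K$ is not totally ramified at $3$. Your proposed method, counting lines modulo $9$ on which the transformed form has at least a double root, targets the wrong condition. For $(i,j)=(2,1)$ one has $i-2j=0$, so there is no ring condition. These subrings are indexed by $(\gamma:\delta)\in\mathbb{P}^1(\mathbb{F}_3)$, i.e.\ lines modulo $3^{i-j}=3$. The requirement is that $gy$ have third coordinate $\equiv 0$ and fourth coordinate $y(\gamma,\delta)\not\equiv 0 \pmod 3$, so the transformed form has \emph{no} root at $(0:1)$. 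Double-root conditions govern the $(3,1)$-terms, not the $(2,1)$-terms. As described, your procedure does not compute the needed quantity, and the modified expressions for the unramified and partially ramified types are left unjustified.

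The missing idea is the paper's invariance observation. Every form is $\GLtwo(\mathbb{Z})$-equivalent to one with $3\mid c$. For such $y$, a characteristic-$3$ computation shows the third coordinate of $gy$ is $\equiv 2b\gamma \pmod 3$; the cubes $(\alpha u+\gamma v)^3$ and $(\beta u+\delta v)^3$ contribute nothing to the $uv^2$-coefficient. Hence $3\mid c(gy)$ if and only if $3\mid \gamma$ or $3\mid b$, and the latter (given $3\mid c$) means $y$ is of type $(1^3)$. For $K$ not totally ramified, only the line $(0:1)$ survives, and there the fourth coordinate is a unit times $d$. Since $\mathrm{Disc}(y)\equiv -b^3d \pmod 3$ when $3\mid c$, there is exactly one $(2,1)$-subring for types $(111),(21),(3)$ and none for $(1^21)$.

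Your reason for excluding $j=0$ does not rule out $(1^21)$, which has a double root at $(0:1)$ in a suitable basis. The real obstruction is $9\mid 3b$, which forces a triple root.

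Finally, carrying the sums out with these counts gives, for type $(3)$, $3^{-3s}(1+3^{2-3s})(1-3^{1-3s})^{-1}$. For type $(1^21)$ it gives $3^{1-4s}+3^{1-6s}(2+3^{1-s}-3^{1-2s})(1-3^{-s})^{-1}(1-3^{1-3s})^{-1}$. The printed formulas in these two cases appear to contain misprints. The printed type-$(3)$ expression has a nonzero $3^{-4s}$-coefficient, which would require $(3,1)$-subrings and hence a root modulo $3$. The printed $3^{-3s}$ term in the $(1^21)$ case would be a $(2,1)$-subring, which is excluded above. So do not try to force agreement with them. A consistency check against Lemma \ref{eta3fourier} is no substitute for the count itself.
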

\begin{proof}
The only case that cannot be treated analogously to Lemma \ref{totramsubordct} is counting rings with $i=2$ and $j=1$. To treat this case, we note the following facts. First, any binary cubic form is $\GLtwo(\mathbb{Z})$-equivalent to one which satisfies $3\mid c$. Second, if the maximal form $y$ has its third coordinate divisible by $3$, then $gy$, with 
\begin{equation*}
    g = \begin{pmatrix}
        \alpha & \beta \\ 
        \gamma & \delta
    \end{pmatrix},
\end{equation*}
does as well, if and only if either $y$ is of type $(1^3)$ at $3$, or $3\mid \gamma$. The $(1^3)$-case can be handled as above, so we may assume that $K$ is not totally ramified at $3$. If $K$ is of type $(1^21)$ at $3$, then we see that $3$ divides the fourth coordinate as soon as it divides the third, contradicting $d\not\equiv 0 \nsmod{3}$. In the unramified cases, we see that there is one subring of type $(2,1)$.
\end{proof}

\subsection{Bounding the dual $S_3$-contribution}\label{dualS3onelevsect}
We now return to studying $\xi_{\pm}^{S_3}(s, \mathbf{1}_{3p\ell/\ell_3,(1^3)} \otimes \mathbf{1}_{\ell_3,\mathcal{O}_1} )$. We do this by switching to maximal overrings, with the help of Lemmas \ref{totramsubordct} and \ref{3adiclemma}. First, a short calculation using Lemma \ref{totramsubordct} shows that we may assume that the rings counted by $\xi_{\pm}^{S_3}(s, \mathbf{1}_{3p\ell/\ell_3,(1^3)} \otimes \mathbf{1}_{\ell_3,\mathcal{O}_1} )$ are maximal at $p$, as those which are both nonmaximal and of type $(1^3)$ are negligible, when $\sigma < 2$. Hence, up to a small error, we may write
\begin{equation}\label{s3fcndecomponelev}
    \xi_{\pm}^{S_3}(s, \mathbf{1}_{3p\ell/\ell_3,(1^3)} \otimes \mathbf{1}_{\ell_3,\mathcal{O}_1} ) = \sum_{K \text{ $S_3$-cubic}}\frac{1}{\abs{\mathrm{Disc}(K)}^s}\sum_{m=1}^\infty \frac{\lambda_m(p,\ell,\ell_3,K)}{m^{2s}},
\end{equation}
where the coefficient $\lambda_m(p,\ell,\ell_3,K)$ counts the number of index $m$-suborders of $\mathcal{O}_K$ where $3p\ell$ is of type $(1^3)$ and $\ell_3$ is of type $\mathcal{O}_1$. This coefficient depends only on $m$ and the splitting of all primes dividing $m$ in $K$, by \eqref{suborderctdw} and Lemmas \ref{totramsubordct} and \ref{3adiclemma}. Moreover, by our earlier discussion, we may assume that the coefficient above is nonzero only if $p$ is totally ramified in $K$. We remark that to be precise, one should weigh the coefficients above with a weight depending on $\mathrm{sgn}(\mathrm{Disc}(K))$. We separate the two cases for the discriminant and bound each one individually.

Next, we sum over all possible splitting types in $K$ of all primes dividing $m$. More precisely, we write the above as
\begin{equation*}
    \sum_\sumstack{K \text{ $S_3$-cubic}\\\mathrm{sgn}(\mathrm{Disc}(K)) = \pm 1}\frac{1}{\abs{\mathrm{Disc}(K)}^s}\sum_{m=1}^\infty m^{-2s} \sum_{(S_i)_{q\mid m}}\lambda_m(\ell,\ell_3,(S_i)_{q\mid m})\mathbf{1}_{p \text{ of type $(1^3)$ in $K$}}\mathbf{1}_{\text{$m$ of type $(S_i)_{q\mid m}$ in $K$}},
\end{equation*}
where $(S_i)_{q\mid m}$ is a list of splitting types associated with each prime dividing $m$. We remark that there are $\ll_\epsilon m^\epsilon$ configurations of splitting types for each $m$. When $q=3$, $S_i$ is not necessarily a splitting type, but may instead be a collection of local specifications. Interchanging the order of summation yields
\begin{equation*}
    \sum_{m=1}^\infty m^{-2s} \sum_{(S_i)_{q\mid m}}\lambda_m(\ell,\ell_3,(S_i)_{q\mid m}) \sum_\sumstack{K \text{ $S_3$-cubic}\\ \mathrm{sgn}(\mathrm{Disc}(K)) = \pm 1}\frac{\mathbf{1}_{p \text{ of type $(1^3)$ in $K$}}}{\abs{\mathrm{Disc}(K)}^s}\mathbf{1}_{\text{$m$ of type $(S_i)_{q\mid m}$ in $K$}}.
\end{equation*}
To study the innermost sum we apply a result by Shankar--Thorne, see \cite[Theorem 11]{ShankarThorne} and the remarks preceding its proof. More precisely, write $P_{S}$ for the product of all primes dividing $m$ associated with the splitting type $S$, so that $\mathrm{sqf}(m)=P_{(111)}P_{(21)}P_{(3)}P_{(1^21)}P_{(1^3)}$, assuming $3\nmid m$ (the case $3\mid m$ is treated similarly). Then,
\begin{equation}\label{Shankarthorneappl}
\begin{split}
    &\sum_\sumstack{K \text{ $S_3$-cubic}\\ \mathrm{sgn}(\mathrm{Disc}(K))=\pm1}\frac{\mathbf{1}_{p \text{ of type $(1^3)$ in $K$}}}{\abs{\mathrm{Disc}(K)}^s}\mathbf{1}_{\text{$m$ of type $(S_i)_{q\mid m}$ in $K$}} = \sum_\sumstack{\text{$P_{(1^21)}\mid d$ fundamental}\\ \mathrm{sgn}(d) = \pm 1 \\ \text{ $K_d$ inert at $P_{(21)}$}\\K_d \text{ split at $P_{(111)}P_{(3)}$}\\ (d,P_{(1^3)})=1}\frac{1}{\abs{d}^s}\sum_{r\mid P_{(3)}}\mu(r)\frac{1}{c_d3^{\omega(rP_{(111)})}}
    \\ \times &\left(\frac{1}{2}M_{1,d}(s)\prod_{q\mid pP_{(1^3)}}\left(\frac{1+\chi_{-3d}(q)}{q^{2s}}\right)\prod_{q\nmid 3dmp}\left(1+\frac{1+\chi_{-3d}(q)}{q^{2s}}\right)+\sum_{E\in \mathcal{L}_3(rP_{(111)},d) }M_{2,E}(s)\frac{\theta_E(pP_{(1^3)})}{(pP_{(1^3)})^{2s}}\prod_{ q\nmid 3dmp}\left(1+\frac{\theta_E(q)}{q^{2s}}\right)\right).
\end{split}
\end{equation}
Here, $c_d$ equals $1$ if $d < -3$, else it equals $3$, $M_{1,d}(s)$ and $M_{2,E}(s)$ are $3$-Euler factors explicitly given in \cite[Table 1]{ShankarThorne}. The set $\mathcal{L}_3(e,d)$ consists of those cubic fields whose discriminant equals $-d$, $-d/3$, $-3d$, or $-27d$ times a square dividing $e^2$. Finally, recall that the multiplicative character $\theta_E(q)$ equals $2$ if $q$ splits completely in $E$, it equals $-1$ if $q$ is inert in $E$, otherwise, it equals zero. 

The character $\theta_E$ is essentially equal to the Artin character of $E$, at primes, and this is what allows us to surpass the support $\sigma < 2/3$. Indeed, we may now use the GRH once again, and sum the characters $\theta_E(p)$ and $\chi_{-3d}(p)$ over $p$. More precisely one has e.g. that
\begin{equation}\label{lastGRHonelev}
    \sum_{p\leq X^\sigma}\widehat{\phi}\left(\frac{\log p}{L}\right)\frac{\theta_E(p)p^{2s}\log p}{p^{3/2}}\ll X^{\sigma+\epsilon} \log\abs{\mathrm{Disc}(E)},
\end{equation}
when $\Re(s) \in [1,1+\epsilon]$, using the GRH and summation by parts, cf. Lemma \ref{lemmaGRH1}. One obtains a bound of the same strength for the average over $\chi_{-3d}(p)$, when $d\neq -3$, and one bounds the contribution from $d=-3$ separately by shifting the contour. Moreover, from \cite[Proposition 4.5]{BTT}, it follows that
\begin{equation*}
    \sum_{\abs{d}\leq t} \#\mathcal{L}_3(e,d) \ll_\epsilon te^{\epsilon}.
\end{equation*}
Combining this with the bound $\lambda_m(\ell,\ell_3,(S_i)_{q\mid m})\ll_\epsilon m^{1/3+\epsilon}$, it follows that the contribution involving the characters $\theta_E(p)$ and $\chi_{-3d}(p)$ to the one-level density can be bounded by 
\begin{equation*}
    \ll X^{\sigma-1+\epsilon} \sum_{n\leq X^\delta}\sum_{abc=n}\mu(b)(ab)^{4}\sum_\sumstack{\ell\mid ab/(ab,3)\\ \ell_3\mid (3,ab)}\left(\frac{\ell\ell_3}{ab}\right)^{4}\sum_{m=1}^\infty m^{1/3+\epsilon-2}\ll X^{\sigma-1+\epsilon+5\delta},
\end{equation*}
which is negligible when $\sigma <1$, for small enough $\delta$ and $\epsilon$ as usual.

The remaining terms, which we do not average over $p$ yet, are
\begin{equation*}
\begin{split}
    &\sum_\sumstack{\text{$P_{(1^21)}\mid d$ fundamental}\\ \mathrm{sgn}(d) = \pm 1 \\ \text{ $K_d$ inert at $P_{(21)}$}\\K_d \text{ split at $P_{(111)}P_{(3)}$}}\frac{1}{\abs{d}^s}\sum_{r\mid P_{(3)}}\mu(r)\frac{1}{c_d3^{\omega(rP_{(111)})}}
    \frac{1}{2p^{2s}}M_{1,d}(s)\prod_{q\mid P_{(1^3)}}\left(\frac{1+\chi_{-3d}(q)}{q^{2s}}\right)\prod_{q\nmid 3dmp}\left(1+\frac{1+\chi_{-3d}(q)}{q^{2s}}\right).
\end{split}
\end{equation*}
By requiring that $d$ be coprime to $m/P_{(1^21)}$, we may replace the splitting conditions on $d$ by multiplying with the factors 
\begin{equation*}
 \prod_{q\mid P_{(111)}P_{(3)}}\left(\frac{1+\chi_d(q)}{2}\right)\prod_{q\mid P_{(21)}}\left(\frac{1-\chi_d(q)}{2}\right).   
\end{equation*}
We now handle the sum over $d$ using similar methods as in Section \ref{redringssect}, whence we will leave out most of the details. First, by using Lemma \ref{grhquadlemma}, we may ignore the contribution from terms involving a character $\chi_d$ or $\chi_{-3d}$, assuming that $\sigma  < 1$. What remains is 
\begin{equation*}
    \sum_{r\mid P_{(3)}}\mu(r)\frac{1}{3^{\omega(rP_{(111)})}}
    \frac{1}{2p^{2s}P_{(1^3)}^{2s}}\sum_\sumstack{\text{$P_{(1^21)}\mid d$ fundamental}\\ \mathrm{sgn}(d) = \pm 1 }\frac{c_d^{-1}}{\abs{d}^s}M_{1,d}(s)\prod_{q\nmid 3dmp}\left(1+\frac{1}{q^{2s}}\right).
\end{equation*}
This term is handled as in the proof of Proposition \ref{redcountprop}. One sums over $d$ using multiplicativity and obtains $\zeta(s)$ multiplied by a finite sum of Euler products which converges absolutely when $\Re(s) > 1/2$. One then shifts the contour from $(1+\epsilon)$ to $(1/2+\epsilon)$. 

We claim that the total residue contribution coming from the simple pole at $s=1$ cancels the previous residue contribution we obtained from shifting the integral, involving $\xi_{\pm}(s, \mathbf{1}_{3p\ell/\ell_3,(1^3)} \otimes \mathbf{1}_{\ell_3,\mathcal{O}_1} )$, to the line $\Re(s)=1+\epsilon$, up to a negligible factor. This follows from us having already bounded the residue contribution coming from the non-$S_3$ part of $\xi_{\pm}(s, \mathbf{1}_{3p\ell/\ell_3,(1^3)} \otimes \mathbf{1}_{\ell_3,\mathcal{O}_1} )$ by bounding the corresponding summatory function uniformly in $p$, see \eqref{c3sumbd} and \eqref{redcount}. Moreover, we have shown that the $S_3$-contribution equals \eqref{Shankarthorneappl}, up to terms whose summatory functions are negligible with respect to $p$. In fact, \eqref{lastGRHonelev} implies such averaged bounds for the summatory functions of the terms in \eqref{Shankarthorneappl} which involve the fields $E$. Hence, the non-negligible parts of the residues we obtained by shifting $\xi_{\pm}(s, \mathbf{1}_{3p\ell/\ell_3,(1^3)} \otimes \mathbf{1}_{\ell_3,\mathcal{O}_1} )$ must then all be concentrated within the terms from \eqref{Shankarthorneappl}, which do not involve $E$ or any of the nonprincipal quadratic characters, which proves our claim.

Finally, a short computation confirms that the shifted integral is negligible when $\sigma < 1$. Indeed, we have that 
\begin{equation*}
    \sum_{n\leq X^{\delta}}X^{-1/2+\epsilon+k}y^{-k}\sum_{p\leq X^\sigma} \widehat{\phi}\left(\frac{\log p}{L}\right)\frac{p^{1+2\epsilon}}{p^{3/2}p^{1-2\epsilon}}\ll X^{\sigma/2+\eta k+\delta-1/2+\epsilon}
\end{equation*}
is $o(1)$ when $\sigma < 1$, for sufficiently small $\eta$ and $\delta$. This concludes the proof of Theorem \ref{onelevQ}.

\section{Function fields}\label{ffields}
In this section, we give a very brief sketch on how to prove Theorem \ref{onelevfq} by implementing the proof of Theorem \ref{onelevQ} in the function field setting. Many details are left out, and we focus on highlighting the differences between $\mathbb{Q}$ and $\mathbb{F}_q$.

\subsection{Preliminaries and notation}
Throughout this section, $q$ will denote a prime power coprime to $2$ and $3$. Furthermore, $\fq$ is the finite field with $q$ elements, and $\fq(T)$ is its rational function field in the indeterminate $T$, i.e. the field of fractions of the polynomial ring $R:=\fq[T]$. The primes in $\fq(T)$ are all non-Archimedean, and they may be identified with the irreducible polynomials $P\in R$, together with the so-called prime at infinity $P_\infty \in \fq[T^{-1}]$. We denote the completion of $\fq(T)$ at $P_\infty$ by $K_\infty$.

Over a function field, the Dedekind zeta function $\zeta_L(s)$ of a (geometric) finite extension $L/\fq(T)$ takes the form
\begin{equation*}
    \zeta_L(u) = \frac{P_L(u)}{(1-qu)(1-u)},
\end{equation*}
where $u=q^{-s}$, and $P_L(u)$ is a polynomial. Moreover, the polynomial $P_L$ is of degree $2g_L$, where $g_L$ is the genus of $L$. The genus may be computed using the Riemann--Hurwitz formula, and the difference $2g_L-\log_q(\abs{\mathrm{Disc}(L)})$ is constant. The GRH, which is a theorem over function fields, asserts that all zeros of $P_L(u)$ have absolute value $q^{-1/2}$. We refer to \cite{Rosen} for a more thorough introduction to the theory of function fields. 
\subsection{Setting up the one-level density}
Over function fields, we define the one-level density slightly differently to over number fields. When $L$ is a finite extension of $\fq(T)$, the polynomial $P_L(u)$ has exactly $2g_L$ roots in $\mathbb{C}$ of the form $q^{-1/2}e^{i\psi_L}$, with $\psi_L\in \mathbb{R}$. The one-level density for the family of cubic fields with splitting type $S_\infty$ at $P_\infty$ is defined as 
\begin{equation*}
    \frac{1}{\#\mathcal{F}^{S_\infty}(X)}\sum_{L\in \mathcal{F}^{S_\infty}(X)}D_\phi(L)\,\,\,\,\text{ with }\,\,\,\, D_\phi(L)=\sum_{\psi_L}\phi\left(\frac{2g_L}{2\pi}\psi_L\right),
\end{equation*}
where $X=q^{2n}$ for $n\geq 1$. We remark that $\psi_L$ ranges over all elements in $\mathbb{R}$ such that $q^{-1/2}e^{i\psi_L}$ is a root of $P_L(u)$, not only over their representatives in $\mathbb{R}/2\pi \mathbb{Z}$. By applying the function field version of the explicit formula, see e.g. \cite[Section 6.2]{Ahlquist2}, one sees that the one-level density equals
\begin{equation*}
    \widehat{\phi}(0)-\frac{\phi(0)}{2}-\frac{2}{2g_L\#\mathcal{F}^{S_\infty}(X) }\sum_{m\geq 1}\widehat{\phi}\left(\frac{m}{2g_L}\right)q^{-m/2}m\sum_{\deg P=m}\sum_{L\in \mathcal{F}^{S_\infty}(X)}\theta_L(P)+o(1),
\end{equation*}
where $\theta_L$ equals $2$ if $P$ is completely split, $-1$ if $P$ is inert, and else it equals zero, cf. the computations in Section \ref{sectonelev}. Our goal is now to bound the contribution from the sum involving $\theta_L(P)$ as $o(1)$ when $\sigma < 1$.

\subsection{Sieving for maximality and geometry of numbers}\label{dualfcnfldonelev}
We now study the sum over $\theta_L(P)$. First, by applying the Delone-Faddeev correspondence, we may sum over maximal forms instead of cubic fields. Next, we sieve for maximality and write
\begin{equation*}
    \sum_{L\in \mathcal{F}^{S_\infty}(X)}\theta_L(P) = \sum_{F}\mu(F)\sum_{x\in \mathcal{F}_{\text{nm}}^{S_\infty}(F;X)}\theta_x(P),
\end{equation*}
where $F$ ranges over all monic squarefree polynomials, and $\mathcal{F}_{\text{nm}}^{S_\infty}(F;X)$ contains (isomorphism classes of) forms which are nonmaximal at all primes dividing $F$. Now, similar to the computations over $\mathbb{Q}$, the sum over $F$ satisfying $\abs{F}\geq X^{\delta}$ is seen to be negligible, by using the GRH. Hence, we may study
\begin{equation*}
    \sum_{\abs{F}\leq X^{\delta}}\mu(F)\sum_{x\in \mathcal{F}_{\text{nm}}^{S_\infty}(F;X)}\theta_P(x)
\end{equation*}
At this point, we also add the non-$S_3$ contribution, and the contribution corresponding to non-orders. As there are very few $C_3$-cubic fields, these are negligible. Moreover, the contribution from rings corresponding to subrings of $R^3$ is negligible when $\sigma < 1$, just as over $\mathbb{Q}$. Indeed, the local factors in the zeta functions counting suborders of a field remain the same, except that one replaces $p$ by $P$. Hence, we may write
\begin{equation*}
    \sum_{\abs{F}\leq X^{\delta}}\mu(F)\sum_{x\in \mathcal{G}_{\text{nm}}^{S_\infty}(F;X)}\theta_P(x)-\sum_{\abs{F}\leq X^{\delta}}\mu(F)\sum_{x\in \mathcal{Q}_{\text{nm}}^{S_\infty}(F;X)}\theta_P(x),
\end{equation*}
where $\mathcal{G}_{\text{nm}}^{S_\infty}(F;X)$ drops the $S_3$-cubic condition on the forms and contains all forms which are nonmaximal at $F$ of discriminant $X$, and its subset $\mathcal{Q}_{\text{nm}}^{S_\infty}(F;X)$ contains only forms whose corresponding maximal ring is the product of $R$ and the ring of integers in a quadratic field. All forms $x$ above are weighted by the inverse size of their stabilisers. An analogue of \eqref{Shintanidiscred} in the function field setting implies that the above equals
\begin{equation}\label{subquad}
    \sum_{\abs{F}\leq X^{\delta}}\mu(F)\sum_{fg\mid F}\mu(g)\sum_{\mathcal{G}_{\text{nm}}^{S_\infty}(X\abs{f}^2/\abs{F}^4)}\eta_{fg}(x)\theta_P(x)-\sum_{\abs{F}\leq X^{\delta}}\mu(F)\sum_{fg\mid F}\mu(g)\sum_{x\in \mathcal{Q}_{\text{nm}}^{S_\infty}(X\abs{f}^2/\abs{F}^4)}\eta_{fg}(x)\theta_P(x).
\end{equation}
The second of the two sums above can be estimated using identical methods to those used in Section \ref{quadringsect}.

We now briefly describe the geometry-of-numbers methods used in \cite{Ahlquist2}, based on the methods developed in \cite{BST} and \cite{BSW}, which we will use to study the sum over $\mathcal{G}_{\text{nm}}^{S_\infty}(F;X)$. The idea is that in order to count isomorphism classes of cubic forms, one may essentially count elements in $\mathcal{H}v_{S_\infty}$, where $v_{S_\infty}$ is a binary cubic form with coefficients in $K_\infty$ such that its corresponding splitting type at $P_\infty$ is $S_\infty$, and $\mathcal{H}$ is a fundamental domain of $ \GLtwo(R)\setminus\GLtwo(K_\infty)$. One averages this over several different forms $hv_{S_\infty}$, with $h $ lying in a compact subset $G_0$ of $\GLtwo(K_\infty)$. After a computation one arrives at, with $B=G_0v_{S_\infty}$,
\begin{equation}\label{ffonelevint}
\begin{split}
   \sum_{\abs{F}\leq X^{\delta}}&\mu(F)\sum_{fg\mid F}\mu(g)\sum_{\mathcal{G}_{\text{nm}}^{S_\infty}(X\abs{f}^2/\abs{F}^4)}\eta_{fg}(x)\theta_P(x) \\&= \sum_{\abs{F}\leq X^{\delta}}\mu(F)\sum_{fg\mid F} \mu(g)\frac{1}{\vol(G_0)}\int_{h\in S_{X\abs{f}^2/\abs{F}^4}} \left(\sum_{x\in hB \cap V(R)}\eta_{fg}(x)\theta_P(x)\right)dh,
\end{split}
\end{equation}
cf. \cite[Eqs. (4.4), (5.3), (7.5)]{Ahlquist2}. Here, $S_X$ is essentially a fundamental domain for $\GLtwo(R)\setminus\GLtwo(K_\infty)_{X\abs{f}^2/\abs{F}^4}$, where the elements in $\GLtwo(K_\infty)_{\abs{f}^2/\abs{F}^4}$ have squared determinant equal to $X\abs{f}^2/\abs{F}^4$. Finally, the measure $dh$ is a Haar measure for $\GLtwo(K_\infty)$. We refer to \cite{Ahlquist2} for the details.

One should view the expression above as the analogue of the integral over Shintani zeta functions that we encountered over $\mathbb{Q}$. The analogue of shifting the contour is to apply the finite Fourier transform. For any modulus $\ell$, the space $V(R/\ell R)$ is self-dual, as $3\nmid q$, so that applying the inverse Fourier transform yields a dual sum over $y\in V(R/\ell R)$, see \cite[Section 5.1]{Ahlquist2}. We may lift the dual forms $y$ defined modulo $\ell$ to forms with coefficients in $R$. 

The contribution from the forms $y$ which are degenerate, i.e. for which $\mathrm{Disc}(y)=0$ corresponds roughly to the residue contributions at $s=1$ and $s=5/6$ that we saw in the computations over $\mathbb{Q}$. A calculation similar to those in \cite[Section 7]{Ahlquist2} shows that the contribution from such forms cancels against the quadratic contribution in \eqref{subquad}, up to a term $o(1)$, when $\sigma < 1$, see also Section \ref{degencontribsect}.

The contribution from forms $y$ which are nondegenerate are the most difficult to bound. We assume henceforth that $f=g=F=1$ to simplify the exposition. General values of $f$ and $g$ are handled similarly to how the function $\eta_{ab}$ was handled over $\mathbb{Q}$. Now, just as over $\mathbb{Q}$, the relevant part of the Fourier transform of $\theta_P$ is 
\begin{equation*}
    \abs{P}^{-1}\mathbf{1}_{(1^3),P}.
\end{equation*}
Using the unfolding method to handle the nondegenerate $y$, cf. the argument preceding \cite[Proposition 5.1]{Ahlquist2} shows that, up to constants, the contribution from nondegenerate $y$ to the integral above is a finite sum of terms
\begin{equation}\label{oscintff}
    \abs{P}^3\sum_\sumstack{y\in \GLtwo(R) \setminus V(R)\\ \mathrm{Disc}(y) \neq 0 \\ y \in (1^3)\nsmod P}\frac{\#\mathrm{Aut}(y)}{\abs{\mathrm{Disc}(y)}}\int_{\text{$v\sim y$ }} \chi_\infty([v^{(i)},v])\mathbf{1}_{v\in C}\mathbf{1}_{\abs{\mathrm{Disc}(v)}=\abs{\lambda}^4\abs{\mathrm{Disc}(y)}/\abs{P}^4}dv,
\end{equation}
where $v\sim y$ means that $v$ and $y$ lie in the same $\GLtwo(K_\infty)$-orbit, i.e. that their local behaviour at $P_\infty$ agrees, and where $\mathrm{Aut}(y)$ is the automorphism group of the corresponding local field. Furthermore, in the expression above, $C$ is a certain box and $\chi_\infty$ is the character on $K_\infty$ defined by
\begin{equation*}
    \chi_\infty\left(\sum_{\nu=-N}^\infty a_\nu T^{-\nu}\right) = \exp\left(-\frac{2\pi i}{p}\mathrm{Tr}_{\fq/\mathbb{F}_p}\left(a_{1}\right)\right),
\end{equation*}
where $p\mid q$ is the characteristic. The pairing $[x,y] := x_1y_1+x_2y_2/3+x_3y_3/3+x_4y_4$ is the usual $\GLtwo$-equivariant pairing. Finally, for $i\leq i_0\in \mathbb{Z}_{\geq 1}$, $v^{(i)}$ is an element in the same $\GLtwo(K_\infty)$-orbit as $v_{S_\infty}$, and $\abs{\lambda}^4\asymp X\abs{f}^2/\abs{F}^4$.

Now, the integral above only depends on the value of $\abs{\mathrm{Disc}(y)}$. Hence, we are interested in evaluating 
\begin{equation}\label{totramsumff}
    \sum_\sumstack{y\in \GLtwo(R) \setminus V(R)\\ \mathrm{Disc}(y) = Z \\ y \in (1^3)\nsmod P}1,
\end{equation}
for different $Z$. We study this by first switching to maximal rings using Lemma \ref{zetasubram}, which continues to hold over function fields. Similar to the situation over $\mathbb{Q}$ we need only consider the case of $S_3$-cubic fields, as the other maximal rings provide only a negligible contribution when $\sigma < 1$.

We also need a function field version of \cite[Theorem 11]{ShankarThorne}. This can be proven, with a slightly different approach depending on whether $\fq$ contains all third roots of unity or not, by using the methods of \cite{CohenMorra}. We leave out the details, but remark that the situation over function fields is much simpler, as there are no $3$-adic complications, as $3\nmid q$.

Just as over $\mathbb{Q}$, the zeta function associated with \eqref{totramsumff} contains one part which does not depend on $P$, and one part which oscillates with $P$. The oscillating part is negligible after summation over $P$, and the non-oscillating part can be computed to give a contribution equal to a constant multiplied by $Z/(\abs{P}^2\#\mathrm{Aut}(y))$. Hence, we obtain a main-term contribution to \eqref{oscintff}, which up to a constant equals
\begin{equation*}
        \abs{P}\int_{v\in C} \chi_\infty([v^{(i)},v])dv.
\end{equation*}
By orthogonality, this equals zero, as $v^{(i)}$ is not the zero form. This concludes the sketch of Theorem \ref{onelevfq}.

\section{Counting cubic number fields}
We now turn to the problem of counting cubic number fields. By applying bounds for character sums, similar to when we studied the one-level density, we are able to improve the bound for the error term in this counting function. For the sharp counting function, we are not able to modify Landau's method in a way that allows for obtaining cancellation between coefficients of different zeta functions. Instead the savings come from a better estimate of the number of binary cubic forms which are nonmaximal at large integers. When counting fields with a smooth weight function, we are able to conditionally establish a high degree of cancellation, obtaining an error term of quality $\mathcal{O}(X^{1/2+\epsilon})$.

\subsection{The sharp counting function}\label{sharpGRHsect}
  We first study the sharp counting function and prove Theorem \ref{sharpgrhthm}. Similar to \eqref{fieldsieve}, if we let $N^\pm(X)$ denote the number of cubic fields of absolute discriminant bounded by $X$ and of sign $\pm$ and let $Q^\pm(X)$ denote the number of quadratic fields with the same condition on the discriminant, then
\begin{equation}\label{countingSieve}
   N^\pm(X) +\frac{1}{2}Q^\pm(X)=  \sum_{n\geq 1}\mu(n)N^{\pm}(W_n;X) + \mathcal{O}(X^{1/2+\epsilon}),
\end{equation}
where $n$ is squarefree. The counting function $N^{\pm}(W_n;X)$ counts binary cubic forms which are nonmaximal at $n$, weighted by the inverse size of their stabilisers, and the error term bounds the contribution from $C_3$-fields. Next, we apply a discriminant-reducing sieve \cite[Eq. (70)]{BST}, allowing us to expand the above as
\begin{equation}\label{discredsum}
    \sum_{n\geq 1}\mu(n)\sum_{abc=n}\mu(b)N^\pm\big(\eta_{ab};X/(a^2b^4c^4)\big) = \sum_{a,b,c \text{ coprime}}\mu(ac)N^\pm\big(\eta_{ab};X/(a^2b^4c^4)\big).
\end{equation}
where $\eta_{ab}(x)$ counts the number of roots of the form $x$ modulo $ab$, as before. Now, $\eta_{ab}$ is nonnegative, however it turns out that it equals a sum of highly oscillatory functions whose contribution can be bounded using the GRH. 

For a prime $p$, we define
\begin{equation*}
    \eta'_p = \eta_p-1,
\end{equation*}
and extend multiplicatively to squarefree integers. Then,
\begin{equation*}
    \eta_{ab} = \sum_{d\mid ab}\eta'_d.
\end{equation*}
Hence, if we factor $a=a_1\ell_1$ and $b=b_1\ell_2$, \eqref{discredsum} further expands to
\begin{equation}\label{firstdecompsieve}
    \sum_{a_1,b_1,c,\ell_1,\ell_2 \text{ coprime}}\mu(a_1\ell_1c)N^\pm\big(\eta'_{a_1b_1};X/(a_1^2\ell_1^2b_1^4\ell_2^4c^4)\big).
\end{equation}
When studying this counting function, one often splits the summation into two different ranges $n\leq Q$ and $n> Q$. We refine this somewhat, which will allow us to take advantage of cancellation in the $a_1$-variable. More specifically, we separate into the "small" range $a_1b_1c\ell_2\ell_1^{2/3}\leq Q$ and the "large" range $a_1b_1c\ell_2\ell_1^{2/3}>Q$.

\subsubsection{Small modulus}

We first handle the small range. Here, the idea is to apply \cite[Theorem 3.1]{BTT} directly and not use the GRH. A minor problem for the application of this theorem is that $\eta'$ is not nonnegative, however we remedy this by using the relation
\begin{equation*}
    \eta'_{a_1b_1} = \sum_{d\mid a_1b_1}\mu(a_1b_1/d)\eta_{d}
\end{equation*}
so that the sum over the small range becomes
\begin{equation}\label{sharpsmallrangesum}
    \sum_\sumstack{a'_1,b'_1,c,\ell_1,\ell_1',\ell_2,\ell_2' \text{ coprime}\\a'_1b'_1c\ell_1^{2/3}\ell_1'\ell_2\ell_2' \leq Q }\mu(a_1'\ell_1'\ell_1c)\mu(\ell_1'\ell_2')N^\pm\big(\eta_{a'_1b_1'};X/(a_1'^2\ell_1^2\ell_1'^2b_1'^4\ell_2^4\ell_2'^4c^4)\big).
\end{equation}
Now, \cite[Theorem 3.1]{BTT} applies directly, cf. \cite[Eq. (79)]{BTT} and implies that (in the notation of \cite[Theorem 2.4]{BTT})
\begin{equation}\label{BTTapplication}
\begin{split}
&N^\pm\big(\eta_{a'_1b_1'};X/(a_1'^2\ell_1^2\ell_1'^2b_1'^4\ell_2^4\ell_2'^4c^4)\big) = \alpha^{\pm}\prod_{p\mid a_1'b_1'}\left(1+p^{-1}\right)\frac{X}{a_1'^2\ell_1^2\ell_1'^2b_1'^4\ell_2^4\ell_2'^4c^4} + \beta\left(\prod_{p\mid a_1'b_1'}2\right)\frac{X}{a_1'^2\ell_1^2\ell_1'^2b_1'^4\ell_2^4\ell_2'^4c^4}\\&+\gamma^\pm\prod_{p\mid a'_1b_1'}\left(1+p^{-1/3}\right)\frac{X^{5/6}}{(a_1'^2\ell_1^2\ell_1'^2b_1'^4\ell_2^4\ell_2'^4c^4)^{5/6}} +\mathcal{O}\left(\frac{X^{3/5+\epsilon}(a_1'b_1')^{2/5}}{(a_1'^2\ell_1^2\ell_1'^2b_1'^4\ell_2^4\ell_2'^4c^4)^{3/5}}\right).
\end{split}
\end{equation}
Summing the error term in the range $a_1b_1c\ell_2\ell_1^{2/3}\leq Q$ yields an upper bound
\begin{equation}\label{sharpbadsmallerror}
    \ll X^{3/5+\epsilon}Q^{1/5}.
\end{equation}
At a first glance, the tail sums of the various main and secondary terms above seem to be rather large. However, upon noting that
\begin{equation*}
    \sum_{d'\mid d}\mu(d/d')\prod_{p\mid d'}\left(1+p^{-1}\right) = \frac{1}{d},
\end{equation*}
and that
\begin{equation*}
    \sum_{d'\mid d}\mu(d/d')\prod_{p\mid d'}\left(1+p^{-1/3}\right) = \frac{1}{d^{1/3}},
\end{equation*}
we see that we may extend the sum over the first and third term in \eqref{BTTapplication} to infinity at the cost of an error (up to a factor $X^\epsilon$)
\begin{equation*}
    \ll \frac{X}{Q^{3/2}} + \frac{X^{5/6}}{Q},
\end{equation*}
which is $\ll XQ^{-2/3}$ when $Q\ll X^{1/3}$. For the second term of \eqref{BTTapplication}, extending the summation to infinity does not yield an acceptable error. Instead, we simply note that
\begin{equation*}
    \sum_{d'\mid d} \mu(d/d')\prod_{p\mid d'}2 = 1,
\end{equation*}
so that the part of \eqref{sharpsmallrangesum} summing this term becomes
\begin{equation}\label{redsmallsum}
    \beta X\sum_\sumstack{a_1,b_1,c,\ell_1,\ell_2 \text{ coprime}\\a_1b_1c\ell_1^{2/3}\ell_2 \leq Q }\mu(a_1\ell_1c)\frac{1}{a_1^2\ell_1^2b_1^4\ell_2^4c^4},
\end{equation}
where we have once again let $a_1=a_1'\ell_1'$ and $b_1=b_1'\ell_2'$.

\subsubsection{Large modulus}\label{sharplargesect}
We now turn to the range $a_1b_1c\ell_2\ell_1^{2/3}>Q$ where we will apply GRH to bound
\begin{equation}\label{tailsumsharppre}
    \sum_\sumstack{a_1,b_1,c,\ell_1,\ell_2 \text{ coprime}\\ a_1b_1c\ell_1^{2/3}\ell_2 > Q}\mu(a_1\ell_1c)N^\pm\big(\eta'_{a_1b_1};X/(a_1^2\ell_1^2b_1^4\ell_2^4c^4)\big)
\end{equation}
The main observation for being able to use GRH is that $\eta'_p = \lambda_p +p\mathbf{1}_{(0),p}$ so that we may decompose the above into
\begin{equation}\label{tailsumsharp}
    \sum_\sumstack{a_1',b_1',c,\ell_1,\ell_1',\ell_2, \ell_2'\text{ coprime}\\ a'_1b'_1c\ell_1^{2/3}\ell_1'\ell_2\ell_2' > Q}\ell_1'\ell_2'\mu(a_1'\ell_1'\ell_1c)N^\pm\big(\lambda_{a'_1b'_1};X/(a_1'^2\ell_1^2b_1'^4\ell_2^4c^4\ell_1'^6\ell_2'^6)\big).
\end{equation}
The contribution of irreducible cubic forms to this sum is
\begin{equation*}
    \sum_\sumstack{a_1',b_1',c,\ell_1,\ell_1',\ell_2, \ell_2'\text{ coprime}\\ a'_1b'_1c\ell_1^{2/3}\ell_1'\ell_2\ell_2' > Q}\ell_1'\ell_2'\mu(a_1'\ell_1'\ell_1c)\sum_\sumstack{0 < \pm \mathrm{Disc}(x)\leq X/(a_1'^2\ell_1^2b_1'^4\ell_2^4c^4\ell_1'^6\ell_2'^6)\\ \text{$x$ irreducible}}\lambda_{a_1'}(x)\lambda_{b_1'}(x).
\end{equation*}
Now, for the maximal cubic form $x_{\text{max}}$ associated with $x$, one sees that $\mu(a_1')\lambda_{a_1'}(x_{\text{max}})$ equals the $a_1'$th coefficient of $1/L(s,\rho_K)$, where $K$ is the field corresponding to the form $x_{\text{max}}$, see \eqref{artinprimedef}. Hence, a standard application of the GRH for cubic Dedekind zeta functions implies that for a maximal cubic form
\begin{equation}\label{grhoneoverL}
    \sum_{A<a_1'<2A} \mu(a_1')\lambda_{a_1'}(x_\text{max})\ll \abs{\mathrm{Disc}(x)}^\epsilon A^{1/2+\epsilon}. 
\end{equation}
As the splitting behaviour of $x$ and $x_\text{max}$ coincide, except for finitely many primes $\ll \abs{\mathrm{Disc}(x)}^\epsilon$ in number, we may interchange the order of summation and write the above as
\begin{equation}\label{sharplargesum}
     \sum_\sumstack{a_1',b_1',c,\ell_1,\ell_1',\ell_2, \ell_2'\text{ coprime}}\ell_1'\ell_2'\mu(\ell_1'\ell_1c)\sum_\sumstack{A=2^j\\ j\geq 0}\sum_\sumstack{0 < \pm \mathrm{Disc}(x)\leq X/(A^2\ell_1^2b_1'^4\ell_2^4c^4\ell_1'^6\ell_2'^6)\\ \text{$x$ irreducible}}\lambda_{b_1'}(x)\sum_\sumstack{a_1' > Q/b'_1c\ell_1^{2/3}\ell_1'\ell_2\ell_2'\\a_1'^2 \leq X/(\abs{\mathrm{Disc}(x)}\ell_1^2b_1'^4\ell_2^4c^4\ell_1'^6\ell_2'^6)\\ a_1'\in [A,2A]}\mu(a_1')\lambda_{a_1'}(x),
\end{equation}
which under the GRH is
\begin{equation*}
    \ll \frac{X^{1+\epsilon}}{Q^{3/2}}.
\end{equation*}

Next, we consider the reducible contribution to \eqref{tailsumsharppre}. Recall that we only need to consider the contribution from forms with precisely one root. For studying such forms, we make the decomposition $\eta'_p=\Tilde{\lambda}_p+(p-1)\mathbf{1}_{p,(0)}$. By means of the truncated Perron's formula, we then see that the contribution from such reducible forms is up to a negligible error
\begin{equation*}
    \sum_\sumstack{a_1',b_1',c,\ell_1,\ell_1',\ell_2, \ell_2'\text{ coprime}\\ a'_1b'_1c\ell_1^{2/3}\ell_1'\ell_2\ell_2' > Q}\ell_1'\ell_2'\prod_{p\mid \ell_1'\ell_2'}\left(1-\frac{1}{p}\right)\mu(a_1'\ell_1'\ell_1c)\frac{1}{2\pi i}\int_{1-iT}^{1+iT}\left(\frac{X}{a_1'^2\ell_1^2b_1'^4\ell_2^4c^4\ell_1'^6\ell_2'^6}\right)^s\frac{1}{s}\xi^{\pm}_{\text{quad}}(s,\Tilde{\lambda}_{a_1'b_1'})ds,
\end{equation*}
with $T=X^{10}$, say. Here, 
\begin{equation*}
    \xi^{\pm}_{\text{quad}}(s,\Tilde{\lambda}_{a_1'b_1'}) = \sum_\sumstack{x\in \mathrm{GL}_2(\mathbb{Z})\setminus V(\mathbb{Z})\\x \text{ has precisely one root}} \frac{\Tilde{\lambda}_{a_1'b_1'}(x)}{\# \mathrm{Stab}(x)\abs{\mathrm{Disc}(x)}^s},
\end{equation*}
where the summation ranges over $x$ whose discriminant has sign equal to $\pm$. We remark that this zeta function was essentially studied in Section \ref{redringssect}. As we are now studying the range where the variable $n$ from Section \ref{redringssect} is large, we need to use a different approach. 

First, we write $\lambda'_p = \Tilde{\lambda}_p-1$ to create a character which is oscillatory for reducible $x$ whose corresponding fraction field is a quadratic field times $\mathbb{Q}$. Henceforth, we call such $x$ quadratic. The above becomes
\begin{equation}\label{redlastsum}
    \sum_\sumstack{a_1'',b_1'',c,\ell_1,\ell_1',\ell_1'',\ell_2, \ell_2',\ell_2''\text{ coprime}\\ a''_1b''_1c\ell_1^{2/3}\ell_1'\ell_1''\ell_2\ell_2'\ell_2'' > Q}\ell_1'\ell_2'\prod_{p\mid \ell_1'\ell_2'}\left(1-\frac{1}{p}\right)\mu(a_1''\ell_1'\ell_1''\ell_1c)\frac{1}{2\pi i}\int_{1-iT}^{1+iT}\left(\frac{X}{a_1''^2\ell_1^2\ell_1''^2b_1''^4\ell_2^4\ell_2''^4c^4\ell_1'^6\ell_2'^6}\right)^s\frac{1}{s}\xi^{\pm}_{\text{quad}}(s,\lambda'_{a_1''b_1''})ds.
\end{equation}
We remark that the zeta function above is essentially a twisted version of a zeta function studied by Shintani, see \cite{ShintQuad}.
The idea is to evaluate this integral by shifting the contour to $\Re(s)=1/2+\epsilon$. For this, we need the following growth bound for $\xi^{\pm}_{\text{quad}}(s,\lambda'_{a_1''b_1''})$.
\begin{lemma}\label{redshintlemma}
The zeta function $\xi^{\pm}_{\text{quad}}(s,\lambda'_{a_1''b_1''})$ admits an analytic continuation to $\Re(s) > 1/2$, with its only pole at $s=1$ with residue
    \begin{equation}\label{redres}
        \frac{\pi^2}{24} \cdot \frac{\mu(a_1''b_1'')}{(a_1''b_1'')^3}\prod_{p\mid a_1''b_1''}\left(1-\frac{1}{p}\right) = \beta  \frac{\mu(a_1''b_1'')}{(a_1''b_1'')^3}\prod_{p\mid a_1''b_1''}\left(1-\frac{1}{p}\right).
    \end{equation}
Furthermore, at the line $\Re(s)=1/2+\epsilon$, it satisfies a weak Lindelöf-type bound
    \begin{equation}\label{redLindelöf}
        \xi^{\pm}_{\text{quad}}(1/2+\epsilon+it,\lambda'_{a_1''b_1''}) \ll (1+\abs{t}^\epsilon)\abs{a_1''b_1''}^\epsilon,
    \end{equation}
    under the GRH for Dirichlet $L$-functions. Furthermore, the same type of bound holds on the ray $\Im(s)=T$, $\Re(s) \geq 1/2+\epsilon$.
\end{lemma}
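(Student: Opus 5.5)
We rewrite $\xi^{\pm}_{\text{quad}}(s,\lambda'_{a_1''b_1''})$ as a sum over maximal overrings, exactly as in the proof of Proposition \ref{redcountprop}. A form with precisely one root corresponds either to a suborder of $\mathcal{O}_{K_d}\times\mathbb{Z}$ with $K_d$ quadratic, or to a suborder of $\mathbb{Z}^3$ whose form has a repeated linear factor. The latter are degenerate, so they are excluded by the condition $\mathrm{Disc}\neq 0$ and we ignore them. Grouping by the fundamental discriminant $d$, we obtain
\begin{equation*}
    \xi^{\pm}_{\text{quad}}(s,\lambda'_{a_1''b_1''}) = \frac12\sum_{\substack{d\neq 1\text{ fundamental}\\ \mathrm{sgn}(d)=\pm1}}\frac{1}{\abs{d}^s}\prod_{q}E_q(d,s),
\end{equation*}
where $E_q(d,s)$ is the local factor of the suborder zeta function \eqref{prodallsubord}, twisted at $q\mid a_1''b_1''$ by $\lambda'_q$. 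For $q\mid a_1''b_1''$, the twist $\lambda'_q=\Tilde\lambda_q-1$ is computed on each $(i,j)$-subring from the splitting type of its form modulo $q$, using the subring enumeration preceding Lemma \ref{totramsubordct}. For a maximal ring, $\lambda'_q$ equals $\chi_d(q)$ when $q\nmid d$ and $0$ when $q\mid d$. On nonmaximal subrings it contributes terms of size $O(q^{-2s})$ times a bounded function of $\chi_d(q)$.

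The resulting local factors are polynomials in $\chi_d(q)$. Expanding them, $\prod_q E_q$ becomes a sum of terms $\chi_d(m)$ times explicit Euler products in $s$ that are absolutely convergent for $\Re(s)>1/2$. Here $m$ runs over products of primes dividing $a_1''b_1''$, together with the $q\nmid d$ factor $1+\chi_d(q)q^{-2s}$ contributing $L(2s,\chi_d)$-type pieces. The key point is that the maximal-order term at each $q\mid a_1''b_1''$ carries a single factor $\chi_d(q)$. Hence the only $d$-sums that are free of non-principal characters come from the nonmaximal terms at every $q\mid a_1''b_1''$, where $\chi_d(q)^2=1$ appears. These are the source of the factor $(a_1''b_1'')^{-3}$: the discriminant is scaled by $q^2$, and $\sum_{\abs{d}\leq t}$ produces $q^{-2}$ from the index together with the $(1-1/q)$ coprimality density. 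We then sum over $d$ by multiplicativity, with $d$ split by its $2$-part as in Proposition \ref{redcountprop}. The principal terms give $\zeta(s)$ times an absolutely convergent product for $\Re(s)>1/2$, and hence a simple pole at $s=1$. Its residue is computed as in Proposition \ref{redcountprop} and should simplify to \eqref{redres}, with the factor $\pi^2/24=\zeta(2)/4$ arising exactly as there.

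The terms involving $\chi_d(m)$ with $m$ a non-square, together with the non-trivial character modulo $4$, are handled as follows. For these we write $\sum_d \chi_d(m)\abs{d}^{-s}$ via quadratic reciprocity \eqref{quadrec} as $\psi_{m_0}$-twisted sums over squarefree $d$. These are Dirichlet series of the shape $L(s,\psi)/L(2s,\psi^2)$ times finite Euler factors at $q\mid 2m$. Under GRH they continue analytically to $\Re(s)>1/2$ without poles, and satisfy $\ll (m(1+\abs{t}))^{\epsilon}$ on $\Re(s)=1/2+\epsilon$ and on the horizontal ray $\Im(s)=T$. This is the Lindelöf-on-average consequence of GRH, as in the smoothed form of Lemma \ref{grhquadlemma}. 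The factors $L(2s,\chi_d)$ and $\zeta(6s-1)$ from \eqref{prodallsubord} are absolutely convergent there. Since $d$ and $L(2s,\chi_d)$ are coupled, the sum over $d$ should be organised so that $L(2s,\chi_d)$ is expanded as an absolutely convergent Dirichlet series $\sum_r \chi_d(r)r^{-2s}$. Each $r$ then merges into the modulus $m$, and the resulting $d$-sum is uniform in $r$. The square $r$ (times square $m$) terms join the principal part.

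The main obstacle is uniformity: the number of expanded terms is $\ll (a_1''b_1'')^\epsilon$, so the bound \eqref{redLindelöf} follows, but one must check that the $r$-sum converges absolutely on $\Re(s)=1/2+\epsilon$ even after the GRH bound contributes $r^\epsilon$. It does, since the $r$-sum carries a weight $r^{-1-2\epsilon}$. One must also check carefully that the $2$-adic bookkeeping leaves no extra pole.
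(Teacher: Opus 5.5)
Your route differs from the paper's and can be made to work. As written, though, the residue \eqref{redres} is not established, and the mechanism you sketch for it is wrong.

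The paper does not pass to maximal overrings. It uses the representatives $u(x_1u^2+x_2uv+x_3v^2)$ with $0\le x_2<2x_3$ and reads off $\lambda'_p$ from divisibility of $x_1,x_2,x_3$. The primes with $\lambda'_p=-1$ force $p\mid x_2,x_3$ and $p\nmid x_1$, which produces $\mu(A)A^{-3s}\sum_{C\mid A}\mu(C)C^{-s}$ explicitly. The $m$-sums become Dirichlet $L$-functions via quadratic characters modulo $4x_3/D$. The residue is then read off from $A=a_1''b_1''$ with the trivial character and the universal average $1$.

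Your decomposition by the fundamental discriminant $d$ (twisted local factors of \eqref{prodallsubord}, reciprocity in $d$) handles the continuation and the bound on $\Re(s)=1/2+\epsilon$ correctly. Both routes need GRH-Lindelöf against a summable weight: $x_3^{-2s}$ in the paper, your expansion of $L(2s,\chi_d)$ here. But your route moves all the arithmetic into the residue, which you leave at ``should simplify''.

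Your heuristic would not produce the residue. ``$q^{-2}$ from the index together with the $(1-1/q)$ coprimality density'' has the wrong power of $q$ and no sign. Also there are no $\chi_d(q)^2$ terms: each twisted factor is linear in $\chi_d(q)$.

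For odd $q\mid a_1''b_1''$, put $X=q^{-2s}$ and $Z=q^{1-6s}$.
\begin{itemize}
\item If $q\nmid d$, every subring of index $q$ or $q^2$ has $\lambda'_q=0$. The factor is $\chi_d(q)\alpha_q(s)-\beta_q(s)$, where
\[
\beta_q(s)=\frac{(1-q^{-1})Z(1+X)}{(1-Z)(1-X)}
\]
comes from type-$(1^3)$ subrings of index at least $q^3$.
\item If $q\mid d$, the maximal ring contributes $0$ (type $(1^21)$). The factor is $-T_q(s)$, where
\[
T_q(s)=X+\frac{(1-q^{-1})Z}{(1-X)(1-Z)}+\frac{XZ}{1-Z}.
\]
Its leading term is the index-$q$ subring of type $(1^3)$, where $\lambda'_q=-1$.
\end{itemize}
After summing over $d$, the principal Euler factor at $q$ is $-\beta_q(s)-q^{-s}T_q(s)$. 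At $s=1$ this equals $-q^{-3}/(1-q^{-2})$, i.e.\ $-q^{-3}(1-q^{-1})$ times the untwisted factor $1/\big((1-q^{-1})(1-q^{-2})\big)$. So:
\begin{itemize}
\item the sign comes from $\lambda'_q=-1$ on type $(1^3)$;
\item the $q^{-3}$ is $q^{-2}$ from the index times $q^{-1}$ from $q\mid d$ in $\abs{d}^{-s}$, the same $q^3$ the paper gets from $q\mid x_2,x_3$;
\item the exact factor $1-q^{-1}$ needs the $q\nmid d$ term $\beta_q$ as well.
\end{itemize}
With this computation, plus a separate check at $q=2$ over the three classes of fundamental discriminants, your argument does yield \eqref{redres}.

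A minor point: one-root forms never come from subrings of $\mathbb{Z}^3$, which have three rational roots. With nonzero discriminant they are exactly the subrings of $\mathcal{O}_{K_d}\times\mathbb{Z}$.
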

We postpone the proof for the sake of making the main argument clearer.

By applying the above lemma, we may shift the contour in the integral from \eqref{redlastsum} to the line $\Re(s)=1/2+\epsilon$. The horizontal contributions are negligible, as $T=X^{10}$ and the contribution from the shifted integral is
\begin{equation*}
    \ll X^{1/2+\epsilon}\sum_\sumstack{a_1'',b_1'',c,\ell_1,\ell_1',\ell_1'',\ell_2, \ell_2',\ell_2''\text{ coprime}\\ a''_1b''_1c\ell_1^{2/3}\ell_1'\ell_1''\ell_2\ell_2'\ell_2'' > Q}\frac{\ell_1'\ell_2'}{(a_1''\ell_1\ell_1''b_1''^2\ell_2^2\ell_2''^2c^2\ell_1'^3\ell_2'^3)^{1+2\epsilon}}\ll X^{1/2+\epsilon}.
\end{equation*}

The contribution from the residue at $s=1$ equals
\begin{equation*}
    \beta X\sum_\sumstack{a_1'',b_1'',c,\ell_1,\ell_1',\ell_1'',\ell_2, \ell_2',\ell_2''\text{ coprime}\\ a''_1b''_1c\ell_1^{2/3}\ell_1'\ell_1''\ell_2\ell_2'\ell_2'' > Q}\mu(a_1''\ell_1'\ell_1''\ell_1c)\frac{\mu(a_1''b_1'')}{a_1''^5\ell_1^2\ell_1''^2b_1''^7\ell_2^4\ell_2''^4c^4\ell_1'^5\ell_2'^5}\prod_{p\mid a_1''b_1''\ell_1'\ell_2'}\left(1-\frac{1}{p}\right).
\end{equation*}
Writing $a_1=a_1''\ell_1''\ell_1'$, $b_1=b_1''\ell_2''\ell_2'$ and using multiplicativity, the above becomes
\begin{equation*}
     \beta X\sum_\sumstack{a_1,b_1,c,\ell_1,\ell_2 \text{ coprime}\\a_1b_1c\ell_1^{2/3}\ell_2 > Q }\mu(a_1\ell_1c)\frac{1}{a_1^2\ell_1^2b_1^4\ell_2^4c^4},
\end{equation*}
which is precisely the complementary sum of \eqref{redsmallsum}. Together, these two sums combine into the main term for the counting function of quadratic fields.

We have now found the main and secondary terms in the counting function of cubic fields, together with an error (up to a factor $X^\epsilon$)
\begin{equation*}
    \ll X^{1/2} + \frac{X}{Q^{3/2}}+\frac{X^{5/6}}{Q}+X^{3/5}Q^{1/5}.
\end{equation*}
Letting $Q=X^{4/17}$ gives a total error
\begin{equation*}
    X^{11/17+\epsilon}=X^{2/3-1/51+\epsilon},
\end{equation*}
as claimed in Theorem \ref{sharpgrhthm}.

Finally, to conclude the argument, we prove Lemma \ref{redshintlemma}.
\begin{proof}[Proof of Lemma \ref{redshintlemma}]
    Without loss of generality, we may modify the definition of $\xi^{\pm}_{\text{quad}}(s,\lambda'_{a_1''b_1''})$ to sum over $\SLtwo$-orbits instead of $\GLtwo$-orbits. The zeta function defined using such orbits is simply equal to $2$ times the zeta function defined using $\GLtwo$-orbits.

    As remarked in connection to \eqref{redshape}, a reducible form $x$ can be brought into the shape
\begin{equation}
    u(x_1u^2+x_2uv+x_3v^2),
\end{equation}
with $0 \leq x_2<2x_3$ and $x_3 >  0$. Moreover, if $x$ only has one root, then this representation is unique and its $\SLtwo$-stabiliser is trivial. Note also that $x$ having more than $1$ root is equivalent to $x_2^2-4x_1x_3$, i.e. the discriminant of a corresponding quadratic form, being square.

The discriminant of a reducible form $x$ of the above shape equals
\begin{equation*}
    x_3^2(x_2^2-4x_1x_3).
\end{equation*}
Moreover, for a prime $p$,
\begin{equation*}
    \lambda'_p(x)=\begin{cases}
        0, \text{ if $p\mid x_1,x_2,x_3$},\\ 
        -1, \text{ if $p\mid x_2,x_3$, $p\nmid x_1$},\\
        0, \text{ if $p\mid x_3$, $p\nmid x_2$},        
    \end{cases}
\end{equation*}
and if $p\nmid x_3$ it equals the Kronecker character
\begin{equation*}
    \left(\frac{x_2^2-4x_1x_3}{p}\right).
\end{equation*}

Hence, from the definition, it follows that
\begin{equation*}
    \xi^{\pm}_{\text{quad}}(s,\lambda'_{d}) = \frac{1}{2}\sum_{AB= d}\frac{\mu(A)}{A^{3s}}\sum_\sumstack{x_3=1\\ (x_3,B)=1}^\infty \frac{1}{x_3^{2s}}\sum_\sumstack{m=1\\ mA\neq \square }^{\infty}\frac{\left(\frac{mA}{B}\right)}{m^s}\sum_\sumstack{Ax_2^2-4x_1x_3=\pm m\\ 0\leq x_2< 2x_3\\(A,x_1)=1}1.
\end{equation*}
We may remove the condition that $x_1$ be coprime to $A$ using Möbius inversion so that the above becomes
\begin{equation*}
    \xi^{\pm}_{\text{quad}}(s,\lambda'_{d}) = \frac{1}{2}\sum_{AB= d}\frac{\mu(A)}{A^{3s}}\sum_{C\mid A}\frac{\mu(C)}{C^s}\sum_\sumstack{x_3=1\\ (x_3,B)=1}^\infty \frac{1}{x_3^{2s}}\sum_\sumstack{m=1\\ mAC\neq \square }^{\infty}\frac{\left(\frac{mAC}{B}\right)}{m^s}\sum_\sumstack{(A/C)x_2^2-4x_1x_3=\pm m\\ 0\leq x_2< 2x_3}1.
\end{equation*}

For the innermost sum, we note that once $x_2$ has been chosen, one may solve for 
\begin{equation*}
    x_1 = \frac{(A/C)x_2^2- (\pm m)}{4x_3}.
\end{equation*}
Hence, the sum over $x_1,x_2$ simply counts half the number of solutions to $(A/C)y^2\equiv \pm m \nsmod{4x_3}$. Before proceeding, we note that up to a negligible error, we may remove the condition that $mAC$ be nonsquare. Next, we expand the zeta function as
\begin{equation}\label{inclsqsum}
\begin{split}
     \frac{1}{2}&\sum_{AB= d}\frac{\mu(A)}{A^{3s}}\sum_{C\mid A}\frac{\mu(C)}{C^s}\\&\times \sum_{x_3=1}^\infty \frac{1}{x_3^{2s}}\left(\sum_\sumstack{D\mid 4x_3\\ D < 4x_3}\frac{1}{D^s}\sum_\sumstack{m=1 \\(4x_3/D,m)=1}^{\infty}\frac{\left(\frac{mACD}{B}\right)}{m^s}\sum_\sumstack{(A/C)x_2^2-4x_1x_3=\pm Dm\\ 0\leq x_2< 2x_3}1+\frac{1}{(4x_3)^s}\sum_\sumstack{m=1}^{\infty}\frac{\left(\frac{4x_3mAC}{B}\right)}{m^s}\sum_\sumstack{(A/C)x_2^2-4x_1x_3=\pm 4x_3m\\ 0\leq x_2< 2x_3}1\right).
\end{split}
\end{equation}
Now, for fixed $A,C,x_3$ as above, write $E = (A/C,4x_3)$ and define $F$ by $A/C=EF$. Then,
\begin{equation*}
    \sum_\sumstack{(A/C) x_2^2-4x_1x_3=\pm Dm\\ 0\leq x_2< 2x_3}1 = \frac{1}{2}\#\{x_2 \nsmod{4x_3/(E,D): (A/C)x_2^2/(E,D)\equiv \pm mD/(E,D)}\}.
\end{equation*}
If $(E,D)\neq E$, then the above is zero, as the left-hand side in the equivalence reduces to zero modulo $E/(E,D)$, which is squarefree. In particular, only the $D$ satisfying $E\mid D$ contribute. For $D$ divisible by $E$, we have that
\begin{equation*}
    \frac{1}{2}\#\{x_2 \nsmod{4x_3/E: Fx_2^2\equiv \pm mD/E}\} = \frac{G_D}{2}\#\{x_2 \nsmod{4x_3/D}: F(G_D^2E/D)x_2^2\equiv \pm m\}
\end{equation*}
where $G_D$ is the smallest integer such that $D/E\mid G_D^2$. If we denote by $q_e$ the size of the quadratic character group $\mathbb{Z}/e\mathbb{Z}[2]$ of $\mathbb{Z}/e\mathbb{Z}$, then $q_e$ is multiplicative in $e$ and
\begin{equation*}
    \frac{G_D}{2}\#\{x_2 \nsmod{4x_3/D}: F(G_D^2E/D)x_2^2\equiv \pm m\}  = \frac{G_D}{2}\sum_{\chi\in \mathbb{Z}/(4x_3/D)\mathbb{Z}[2]} \chi(\pm m)\overline{\chi}(FG_D^2E/D).
\end{equation*}
We also have the uniform bound $q_p\ll 1$ so that $q_e\ll e^\epsilon$. Using quadratic reciprocity \eqref{quadrec}, we see that the sum over $m$ in \eqref{inclsqsum} is a sum of Dirichlet $L$-functions $L(s,\Tilde{\chi})$, for quadratic characters $\Tilde{\chi}$. Moreover, $\Tilde{\chi}$ is the trivial character if and only if $d=A$ and $\chi\in \mathbb{Z}/e\mathbb{Z}[2]$ is trivial. This finishes the proof of \eqref{redLindelöf}, using Lindelöf for Dirichlet $L$-functions.

To find the value of the residue, we begin by computing the residue for the function defined by the $D$ and $m$-sum in \eqref{inclsqsum}, when $B=1$. The value of this residue is simply half of the average
\begin{equation*}
    \frac{1}{4x_3}\sum_{m=1}^{4x_3}\#\{(A/C)y^2\equiv \pm m \nsmod{4x_3}\} = 1.
\end{equation*}
Hence, the total residue equals \eqref{redres}.

\end{proof}

\subsection{Counting sharply without the GRH}\label{sharpnoGRHsect}
We now outline how to unconditionally obtain savings in the error term of the counting function of cubic fields, proving Theorem \ref{uncondsharpctthm}. The purpose of this section is to illustrate  that it is possible to go beyond an error of size $\mathcal{O}\left(X^{2/3+\epsilon}\right)$, even unconditionally. Our argument is not optimised and can certainly be improved at the cost of reduced clarity. We present it in the form given below so that the most important steps are highlighted.

The starting point of the argument is the sieve \eqref{countingSieve}, and its decomposed form \eqref{firstdecompsieve}. As in the previous section, we split the summation into two parts, depending on a parameter $Q$. The error-term contribution from the small range is once again bounded by
\begin{equation*}
    \ll X^{3/5+\epsilon}Q^{1/5},
\end{equation*}
as in \eqref{sharpbadsmallerror}.

Similar to Section \ref{sharpGRHsect}, the source of the savings in the error term is the large range. The key idea is to obtain a nontrivial bound for the sum \eqref{grhoneoverL}. As we shall see, without access to the GRH, this nontrivial bound is very close to the trivial one.

If $K$ is the field associated with $x_\text{max}$ from \eqref{grhoneoverL}, then the sum in \eqref{grhoneoverL} is a partial sum of the coefficients of $1/L(s,\rho_K)$. We obtain a power-saving estimate by using the fact that not too many $L(s,\rho_K)$ can have a zero which lies close to the point $s=1$. 

To be more precise, given a field $K$, and real numbers $R,T\geq 1$, and $\sigma\geq 0$ we let 
\begin{equation}\label{zeroboxdef}
    N_K(\sigma,T)=\#\{\rho: L(s,\rho_K)=0, \abs{\Im(\rho)}\leq T, \Re(\rho)\geq \sigma  \}.
\end{equation}
We then have the zero-density result, which follows from \cite[Theorem 1.1]{TZ}.
\begin{prop}\label{zerodenspropcube}
    Given $R,T$ and $\sigma$ as above, we have that
    \begin{equation}\label{cubiczerodens}
        \sum_\sumstack{K: \abs{\mathrm{Disc}(K)}\leq R\\ K \text{ $S_3$-cubic}} N_K(\sigma,T)\ll_\epsilon R^{1/2+\epsilon}(R^3T)^{B(1-\sigma)+\epsilon},
    \end{equation}
    where $B=216\cdot 10^7$.
\end{prop}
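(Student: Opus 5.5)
The plan is to deduce \eqref{cubiczerodens} from the Thorner--Zaman zero-density estimate \cite[Theorem 1.1]{TZ}. That result bounds, for a family of number fields $L$ of fixed degree $n$ with Galois group $G$ and bounded discriminant, the sum over the family of the number of zeros of $\zeta_L(s)/\zeta(s)$ in the box $\Re(s)\ge\sigma$, $\abs{\Im(s)}\le T$. The bound has the shape $\ll (Q^{c}T)^{B(1-\sigma)}$ times the size of the family, with $B=216\cdot 10^7$ hidden in their constants and $Q$ the discriminant bound.

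A naive direct application to the family of $S_3$-cubic fields is blocked by a technical feature of their hypotheses. Their theorem is formulated for Artin $L$-functions attached to a fixed Galois representation, and it requires control on the overlap between different members of the family: distinct fields $K$ may share the same Galois closure $\widetilde K$. For $S_3$ this is harmless, because the three conjugate cubic fields inside one sextic $\widetilde K$ give the same $L(s,\rho_K)$, and we include only one per isomorphism class.

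I would therefore pass to Galois closures. Since $\mathrm{Disc}(\widetilde K)=\mathrm{Disc}(K)^3$ (up to the factor from the quadratic resolvent, bounded by $\abs{\mathrm{Disc}(K)}^3$), the sextic fields have discriminant at most $R^3$. We then apply \cite[Theorem 1.1]{TZ} to the family of $S_3$-sextic Galois fields $\widetilde K$ with discriminant $\le R^3$, using the two-dimensional representation $\rho$ of $S_3$. This produces the factor $(R^3T)^{B(1-\sigma)+\epsilon}$. The prefactor $R^{1/2+\epsilon}$ should come from the dependence in their theorem on the number of fields sharing a given resolvent, or more precisely on the maximal number of fields in the family whose Galois closures contain a common subfield. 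Here that is governed by the number of cubic fields with a fixed quadratic resolvent of discriminant $d$, which is $\ll \abs{\mathrm{Disc}}^{1/2+\epsilon}$ via $3$-torsion in class groups (trivial bound $h(d)\ll \abs{d}^{1/2+\epsilon}$).

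Concretely, the steps are:
\begin{enumerate}
\item Verify that the family $\{L(s,\rho_K)\}$ satisfies the hypotheses of \cite{TZ}, namely entirety (Artin's conjecture holds for $S_3$) and the required conductor bound $\mathrm{cond}(\rho_K)=\abs{\mathrm{Disc}(K)}$.
\item Bound the multiplicity with which a given $L$-function appears by the isomorphism-class convention.
\item Bound the overlap term by $R^{1/2+\epsilon}$, using the class number bound after fibering over quadratic resolvents.
\item Insert these into their estimate.
\end{enumerate}

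I expect the main obstacle to be step (3) together with matching their precise normalisation. The ``technical error'' mentioned in the AI statement suggests that the overlap or independence hypothesis is exactly where care is needed. If their theorem is stated only with the trivial count of fields, I would instead bound the number of fields sharing a Galois-closure subfield directly by $\#\{K:\mathrm{Disc}(K)\le R\}\ll R$, and accept a worse prefactor only if the $R^{1/2}$ count fails.
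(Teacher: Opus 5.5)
Your reduction to Galois closures agrees with the paper. The map $K\mapsto M=\widetilde K$ is injective on isomorphism classes, every zero of $L(s,\rho_K)$ is a zero of $\zeta_M(s)/\zeta(s)$, and $\abs{\mathrm{Disc}(M)}=\abs{\mathrm{Disc}(L_M)}\abs{\mathrm{Disc}(K)}^2\le R^3$. This is what produces the factor $(R^3T)^{B(1-\sigma)+\epsilon}$ with $B=10^7\cdot 6^3$.

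\textbf{The gap is in step (3).} Write $\mathcal{G}$ for the family of $S_3$-sextic Galois fields and $\mathcal{G}(Y)$ for those with $\abs{\mathrm{Disc}(M)}\le Y$. For the family you actually propose, $\mathcal{G}(R^3)$, the overlap $\max_M\#\{M': M\cap M'\neq\mathbb{Q}\}$ is not $\ll R^{1/2+\epsilon}$.
\begin{itemize}
\item The count you invoke is misstated. A quadratic field of discriminant $d$ is the resolvent of infinitely many cubic fields. What is $\ll\abs{d}^{1/2+\epsilon}f^{\epsilon}$ is the number of cubic fields with discriminant exactly $df^2$.
\item You must therefore sum over $f$. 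Your family only imposes $\abs{\mathrm{Disc}(M')}=\abs{d}^3f^4\le R^3$, which gives an overlap of size $\ll R^{3/4+\epsilon}\abs{d}^{-1/4}$.
\item This is sharp for bounded $d$. The Galois closures of the pure cubic fields $\mathbb{Q}(\sqrt[3]{m})$ all contain $\mathbb{Q}(\sqrt{-3})$ and satisfy $\abs{\mathrm{Disc}(M)}\asymp f^4$. So $\gg R^{3/4}$ of them lie in $\mathcal{G}(R^3)$, and they pairwise overlap.
\item Your fallback, bounding the overlap by the total number of fields, gives a prefactor at least $R$. That does not prove the proposition. It would also make the result useless downstream, where the number of exceptional $K$ must be $\ll R^{3/4+\epsilon}=o(R)$.
\end{itemize}

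\textbf{The missing idea} is to truncate the sextic family so that it reflects $\abs{\mathrm{Disc}(K_M)}\le R$ rather than $\abs{\mathrm{Disc}(M)}\le R^3$. It is the constraint $\abs{d}f^2\le R$ that gives $\abs{d}^{1/2}\cdot\#\{f\le(R/\abs{d})^{1/2}\}\ll R^{1/2}$. The Thorner--Zaman conductor factor, by contrast, is governed by $\abs{\mathrm{Disc}(M)}$.

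The paper reconciles the two as follows.
\begin{itemize}
\item It partitions $\mathcal{G}$ into subfamilies $\mathcal{G}_i$ according to $\log\abs{\mathrm{Disc}(L_M)}/\log\abs{\mathrm{Disc}(M)}\in[i\delta,(i+1)\delta)$.
\item It applies the bound to $\mathcal{G}_i(R^{a_i})$ with $a_i=2/(1-(i+1)\delta)\le 3$. This subfamily contains every $M\in\mathcal{G}_i$ with $\abs{\mathrm{Disc}(K_M)}\le R$.
\item Every member has $\abs{d}\le R^{a_i(i+1)\delta}$ and $f^2\le R^{a_i(1-3i\delta)/2}$, so the overlap is $\ll R^{1/2+O(\delta)+\epsilon}$.
\item Letting $\delta\to 0$ gives the stated bound.
\end{itemize}

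The paper records that the Thorner--Zaman inequality holds for every subfamily, with the overlap computed inside it. Granted that, you could also apply it directly to $\{M:\abs{\mathrm{Disc}(K_M)}\le R\}\subseteq\mathcal{G}(R^3)$ and sum over $f\le(R/\abs{d})^{1/2}$. What fails in your proposal is only the choice of family in step (3) and the resulting overlap count.
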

We postpone the proof until the end of this section. 

Given the proposition above, we may fix $T=R$, and choose $\sigma$ such that
\begin{equation}\label{sigmachoice}
    B(1-\sigma)=1/16,
\end{equation}
say. Then, the right-hand side in \eqref{cubiczerodens} is $\ll R^{3/4+\epsilon}$. In particular, at most $R^{3/4+\epsilon}$ of the $L$-functions $L(s,\rho_K)$, with $K$ of absolute discriminant bounded by $R$ can have a zero in the box defined by \eqref{zeroboxdef}. In particular, most $L(s,\rho_K)$ have no zero in this box.

Now, given a field $K$, there are at most $\left(R/\abs{\mathrm{Disc}(K)}\right)^{1/2+\epsilon}$ irreducible cubic forms of discriminant bounded by $R$ with $K$ as the fraction field of the corresponding overring. Hence, the number of irreducible cubic forms $x$, whose maximal overring has fraction field $K$ with $L(s,\rho_K)$ having a zero in the above box is bounded by
\begin{equation*}
    \ll \sum_{\abs{\mathrm{Disc}(K)}\leq R^{3/4}}\frac{R^{1/2+\epsilon}}{\abs{\mathrm{Disc}(K)}^{1/2}} + R^{3/4+\epsilon}\frac{R^{1/2+\epsilon}}{\left(R^{3/4}\right)^{1/2}} \ll R^{7/8+\epsilon}.
\end{equation*}

On the other hand, for an $L$-function $L(s,\rho_K)$ with no zero in the box above, we have the following nontrivial bound for its partial sums.
\begin{lemma}\label{uncondpsavelemma}
    Let $K$ be an $S_3$-cubic field such that $L(s,\rho_K)$ has no zeros in the box
    \begin{equation}\label{actualboxdef}
        \{s:  \abs{\Im(s)}\leq T, \Re(s)\geq \sigma  \},
    \end{equation}
    where $1/2 <\sigma < 1$. Then, for $A$ with $\log T \asymp \log A$, we have that
    \begin{equation}\label{savesum}
        \sum_{a\leq A}\mu_K(a)\ll \left(A^{\sigma+\epsilon}+\frac{A^{1+\epsilon}}{T}\right)\abs{\mathrm{Disc}(K)}^{\epsilon},
    \end{equation}
    where $\mu_K(a)$ is the $a$th coefficient of $1/L(s,\rho_K)$.
\end{lemma}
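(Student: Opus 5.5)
We will prove the bound \eqref{savesum} by the standard truncated Perron argument applied to $1/L(s,\rho_K)$, shifting the contour into the zero-free box \eqref{actualboxdef}. Write $D=\abs{\mathrm{Disc}(K)}$.

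The starting point is the truncated Perron formula. For $A$ a half-integer, say, with $c=1+1/\log A$, we have
\begin{equation*}
    \sum_{a\leq A}\mu_K(a) = \frac{1}{2\pi i}\int_{c-iT}^{c+iT}\frac{1}{L(s,\rho_K)}\frac{A^s}{s}\,ds + \mathcal{O}\left(\frac{A^{1+\epsilon}}{T}\right).
\end{equation*}
The error term follows from $\abs{\mu_K(a)}\leq d_2(a)\ll a^\epsilon$, since $\lambda_K$ restricted to squarefree arguments is bounded by the divisor function. We then move the segment to $\Re(s)=\sigma+\epsilon'$ for a small $\epsilon'>0$. Because $L(s,\rho_K)$ is entire and has no zeros in the box, $1/L(s,\rho_K)$ is holomorphic in the enclosed rectangle, and no residues arise. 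Trivially bounding what remains, the vertical segment contributes $A^{\sigma+\epsilon'}\int_{-T}^{T}\abs{L(\sigma+\epsilon'+it,\rho_K)}^{-1}\frac{dt}{1+\abs{t}}$. Each horizontal segment contributes $\ll T^{-1}\max_{\sigma+\epsilon'\leq u\leq c}A^{u}\abs{L(u\pm iT,\rho_K)}^{-1}$.

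The key input, and the main obstacle, is an upper bound for $\abs{1/L(s,\rho_K)}$ in the region $\Re(s)\geq \sigma+\epsilon'$, $\abs{\Im s}\leq T$, of the form $\ll (DT)^{\epsilon}$. Zero-freeness alone gives no such pointwise bound, and it has to be derived. We will follow the classical Littlewood/Titchmarsh argument: apply the Borel--Carathéodory theorem to $\log L(s,\rho_K)$, which is holomorphic in the zero-free box. We use two circles centred at $2+it_0$ with radii reaching $\sigma+\epsilon'/2$ and $\sigma+\epsilon'$ respectively. The required upper bound for $\Re\log L=\log\abs{L}$ comes from the convexity (Phragmén--Lindelöf) bound $\abs{L(s,\rho_K)}\ll (D(1+\abs{t}))^{\mathcal{O}(1)}$ in the strip. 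This yields $\abs{\log L(s,\rho_K)}\ll \log(DT)$ on the smaller region. A Hadamard three-circles interpolation between $\Re s=1+\epsilon''$, where $\abs{\log L}\ll 1$, and $\Re s=\sigma+\epsilon'/2$ then improves this to $\abs{\log L(s,\rho_K)}\leq \epsilon\log(DT)+\mathcal{O}(1)$ for $\Re s\geq \sigma+\epsilon'$. This is the standard deduction that zero-free regions imply Lindelöf-type bounds for $1/L$. One must stay slightly inside the box, which is why the bound loses $\epsilon$ in the exponent of $A$. Points near $\abs{\Im s}=T$ require the disks to stay inside the box, so we will in fact shrink $T$ to $T/2$ in the contour, which is harmless. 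The width $\sigma-1/2$ only affects implied constants, and these depend on $\sigma$.

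Inserting this bound finishes the estimate. The vertical segment is $\ll A^{\sigma+\epsilon}(DT)^{\epsilon}\log T$. The horizontal segments are $\ll T^{-1}(A^{c}+A^{\sigma})(DT)^\epsilon\ll A^{1+\epsilon}T^{-1}D^{\epsilon}$. Since $\log T\asymp\log A$, every factor $T^{\epsilon}$ or $\log T$ is absorbed into $A^{\epsilon}$, and \eqref{savesum} follows after renaming $\epsilon$.
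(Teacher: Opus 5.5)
Your proposal is correct and follows essentially the same route as the paper: truncated Perron at height $T/2$, a Lindel\"of-type bound $1/\abs{L(s,\rho_K)}\ll \big(\abs{\mathrm{Disc}(K)}(\abs{\Im(s)}+1)\big)^{\epsilon}$ on $\Re(s)\geq\sigma+\epsilon$, $\abs{\Im(s)}\leq T/2$ inside the zero-free box, and a contour shift to $\Re(s)=\sigma+\epsilon$. The only difference is that the paper cites the methods of Iwaniec--Kowalski (Proposition 5.16, Theorem 5.19) for that bound, whereas you spell out the Borel--Carath\'eodory plus Hadamard three-circles argument; there, let the Borel--Carath\'eodory outer radius reach $\sigma$ itself, so that the bound $\abs{\log L(s,\rho_K)}\ll\log(\abs{\mathrm{Disc}(K)}T)$ is actually available on the line $\Re(s)=\sigma+\epsilon'/2$ that your interpolation uses.
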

\begin{proof}
    By the truncated Perron's formula, we have that the partial sum above equals
    \begin{equation*}
        \frac{1}{2\pi i}\int_{1+\epsilon-iT/2}^{1+\epsilon+iT/2} \frac{A^s}{L(s,\rho_K)}\frac{ds}{s} + \mathcal{O}\left(\frac{A^{1+\epsilon}}{T}\right).
    \end{equation*}
Now, in the box 
\begin{equation*}
    \{s:  \abs{\Im(s)}\leq T/2, \Re(s)\geq \sigma+\epsilon  \},
\end{equation*}
we have the bound
\begin{equation}\label{growthbd}
    \frac{1}{\abs{L(s,\rho_K)}}\ll \abs{\mathrm{Disc}(K)}^\epsilon(\abs{\Im(s)}+1)^\epsilon,
\end{equation}
as $L(s,\rho_K)$ has no zeros in the box \eqref{actualboxdef}. Indeed, this follows from similar methods as those used to prove \cite[Proposition 5.16, Theorem 5.19]{IK}. Hence, shifting the contour to $\Re(s)=\sigma+\epsilon$ yields the desired bound.
\end{proof}
\begin{remark}
    We have the same bound, with a very similar proof, if one restricts the sum in \eqref{savesum} to squarefree integers $a$. In this case, $\mu_K(a)=\mu(a)\lambda_K(a)$.
\end{remark}

Using Lemma \ref{uncondpsavelemma}, we can bound the sum \eqref{sharplargesum} as being 
\begin{equation*}
    \ll \frac{X^{1+\epsilon}}{Q^{2-\sigma}} = \frac{X}{Q^{1+1/(16B)}},
\end{equation*}
with $\sigma$ as in \eqref{sigmachoice}. Indeed, to prove this, we consider a box as in \eqref{actualboxdef} and use Lemma \ref{uncondpsavelemma} to control partial sums associated with $L(s,\rho_K)$ with no zeros in such a box, and use the trivial bound
\begin{equation*}
    \sum_{a\leq A}\mu(a)\lambda_K(a)\ll A^{1+\epsilon}
\end{equation*}
for the few $K$ such that $L(s,\rho_K)$ has a zero in such a box.

We also need to consider the contribution to the large range coming from reducible forms. We proceed similarly to the proof of Lemma \ref{redshintlemma}, except that we use the convexity bound for Dirichlet $L$-functions instead of the GRH. We obtain a main term and an error term, which is eclipsed by the one above.

Finally, balancing the error terms
\begin{equation*}
    X^{3/5+\epsilon}Q^{1/5}+\frac{X^{1+\epsilon}}{Q^{1+1/(16B)}}
\end{equation*}
yields the optimal choice $Q= X^{a}$, with 
\begin{equation}\label{sharpnogrhopta}
    a= \frac{2}{5}\left(\frac{6}{5}+\frac{1}{16B}\right)^{-1},
\end{equation}
giving a total error
\begin{equation*}
    \ll X^{2/3-c_B+\epsilon},
\end{equation*}
with
\begin{equation*}
    c_B=\frac{1}{288B+15},
\end{equation*}
as desired.

We remark that Theorem \ref{quasiapplthm} is proven in the same way. Assuming that $\zeta_K(s)$, and thus $L(s,\rho_K)$, has no zeros with real part larger than $1-\tau$, Lemma \ref{uncondpsavelemma} implies a bound $\ll A^{1-\tau+\epsilon}$ for the partial sums \eqref{savesum}. Balancing the resulting error terms 
\begin{equation}\label{quasibalance}
    X^{3/5+\epsilon}Q^{1/5}+\frac{X^{1+\epsilon}}{Q^{1+\tau}}
\end{equation}
yields the exponent $c(\tau)$ from Theorem \ref{quasiapplthm}.

We end this section by proving Proposition \ref{zerodenspropcube}.
\begin{proof}[Proof of Proposition \ref{zerodenspropcube}]
For the family $\mathcal{G}$ of Galois $S_3$-sextic fields $M$, let $\mathcal{G}(R) = \{M\in \mathcal{G}: \abs{\mathrm{Disc}(M)}\leq R\}$. Then, it is known that
\begin{equation*}
    \sum_{M\in \mathcal{G}(R)}N_M(\sigma,T) \ll m_\mathcal{G}(R) (RT)^{10^76^3(1-\sigma)+\epsilon},
\end{equation*}
see \cite[Theorem 1.1]{TZ}. Here, 
\begin{equation*}
    m_\mathcal{G}(R) = \max_{M\in \mathcal{G}(R)}\#\{M'\in \mathcal{G}(R): M\cap M'\neq \mathbb{Q}\},
\end{equation*}
and $N_M(\sigma,T)$ counts zeros of $\zeta_M/\zeta$. Moreover, \cite[Theorem 1.1]{TZ} asserts that the same inequality holds for every subfamily of $\mathcal{G}$ as well.

Now, $\zeta_M/\zeta$ factors into a product of Artin $L$-functions associated with characters of the nontrivial irreducible representations of $S_3$. One of these factors is $L(s,\rho_{K_M})$, where $K_M$ is the unique (up to isomorphism) $S_3$-cubic subfield of $M$ so that $M$ is the Galois closure of $K_M$. It follows that every zero of $L(s,\rho_{K_M})$ is also a zero of $\zeta_M(s)/\zeta(s)$.

Let $L_M$ denote the quadratic resolvent of $M$. Then, the discriminant 
\begin{equation*}
    \abs{\mathrm{Disc}(M)} = \abs{\mathrm{Disc}(L_M)}\abs{\mathrm{Disc}(K_M)}^2.
\end{equation*}
The idea is now to split $\mathcal{G}$ into subfamilies depending on the size of the discriminant of $L_M$. Fix a small $\delta > 0$ and consider a partition $I_0 = [0,\delta)$, $I_1=[\delta, 2\delta),...,$ $I_{N_\delta}=[N_{\delta}\delta, 1/3]$ of the interval $[0,1/3]$. For each $0\leq i\leq N_\delta$, we define a subfamily $\mathcal{G}_i$ of $\mathcal{G}$ by
\begin{equation*}
    \mathcal{G}_i=\big\{M\in \mathcal{G}: \frac{\log\abs{\mathrm{Disc}(L_M)}}{\log\abs{\mathrm{Disc}(M)} } \in I_i\big\}.
\end{equation*}
Then, for $M\in \mathcal{G}_i$, we have that
\begin{equation*}
    \abs{\mathrm{Disc}(M)}^{(1-(i+1)\delta)/2} \leq \abs{\mathrm{Disc}(K_M)}\leq \abs{\mathrm{Disc}(M)}^{(1-i\delta)/2}.
\end{equation*}
In particular, for such an $M$, if $\abs{\mathrm{Disc}(K_M)}\leq R$, then $\abs{\mathrm{Disc}(M)}\leq R^{2/(1-(i+1)\delta)}$. We write $a_i= 2/(1-(i+1)\delta)$.

We now bound $m_{\mathcal{G}_i}(R^{a_i})$. For two non-isomorphic $S_3$-sextic $M,M'$, the condition $M\cap M'\neq \mathbb{Q}$ translates into $M$ and $M'$ being cubic extensions of the same quadratic subfield. Equivalently, the cubic subfields $K_M$ and $K_{M'}$ share the same quadratic resolvent $L_M$. Write $\abs{\mathrm{Disc}(K_{M'})} = \abs{\mathrm{Disc}(L_{M})}f(K_{M'})^2$, for a positive integer $f(K_{M'})$, which up to bounded factors equals the product of all totally ramified primes in $K_{M'}$. For $M'\in \mathcal{G}_i$, we note that
\begin{equation*}
    f(K_{M'})^2\leq \abs{\mathrm{Disc}(M')}^{(1-3i\delta)/2}.
\end{equation*}
Let $d$ be a fundamental discriminant and let $f\geq 1$. Then, by the Brauer--Siegel theorem, the number of cubic fields of discriminant equal to $df^2$ is $\ll d^{1/2+\epsilon}f^{\epsilon}$. It follows that
\begin{equation*}
    m_{\mathcal{G}_i}(R^{a_i})\leq R^{a_i\left((i+1)\delta/2+(1-3i\delta)/4\right) +\epsilon}= R^{\frac{1}{2}+o_\delta(1)+\epsilon},
\end{equation*}
where $o_\delta(1)\to 0$ as $\delta \to 0$.

Finally, we bound the left-hand side of \eqref{cubiczerodens} asymptotically  by
\begin{equation*}
    \sum_{i=0}^{N_\delta} \sum_{M\in \mathcal{G}_i(R^{a_i})}N_M(\sigma,T) \ll_\delta \sum_{i=0}^{N_\delta} m_{\mathcal{G}_i}(R^{a_i})(R^3T)^{10^76^3(1-\sigma)+\epsilon}\ll_\delta R^{1/2+o_\delta(1)+\epsilon}(R^3T)^{10^76^3(1-\sigma)+\epsilon}.
\end{equation*}
Choosing $\delta >0$ small enough yields the desired bound.
\end{proof}

\subsection{The smoothed counting function}\label{smoothctallfieldsect}

We now turn to the related problem of counting cubic number fields with a smooth weight, where we will be able to obtain significantly improved results. Let $h: \mathbb{R}_{\geq 0}\to \mathbb{C}$ be a smooth function with compact support. We then study the smoothed counting function
\begin{equation*}
    B^\pm(h;X) = \sum_{m\geq 1}L^{\pm}_mh\left(\frac{m}{X}\right),
\end{equation*}
where $L^\pm_m$ is the number of $S_3$-cubic, and quadratic fields, with discriminant equal to $\pm m$, with the quadratic fields weighted by a factor $1/2$. The contribution from quadratic fields can later be removed by means of summation by parts to recover $N^\pm(h;X)$.

An inclusion-exclusion sieve shows that 
\begin{equation}\label{smoothinclexcl}
    B^\pm(h;X)  = \sum_{n\geq 1}\mu(n)\sum_{m\geq 1}a^{\pm}_1(\Psi_{n},m)h\left(\frac{m}{X}\right) + \mathcal{O}(X^{1/2+\epsilon}),
\end{equation}
where $a^{\pm}_1(\Psi_{n},m)$ counts the number of cubic forms which are nonmaximal at $n$ with discriminant equal to $\pm m$. We remark that all error terms are allowed to depend on $h$.

We write the innermost sum above as a contour integral without first separating $n$ into large and small. If we let $\Tilde{h}$ denote the Mellin transform of $h$, then by Mellin inversion
\begin{equation}\label{smoothfirstcont}
    \sum_{m\geq 1}a^{\pm}_1(\Psi_{n},m)h\left(\frac{m}{X}\right)= \frac{1}{2\pi i}\int_{(1+\epsilon)}X^s\xi^\pm(s,\Psi_n)\Tilde{h}(s)ds,
\end{equation}
see e.g. \cite[Section 4.1]{SST}. The above integral is absolutely convergent. Indeed, as
\begin{equation*}
    \Tilde{h}(s)= \int_{0}^\infty h(u)u^{s-1}du,
\end{equation*}
integration by parts shows that $\Tilde{h}(s)$ is rapidly decaying as $\abs{\Im(s)}\to \infty$ along a fixed vertical strip. 

Applying the discriminant-reducing identity \eqref{Shintanidiscred} shows that \eqref{smoothfirstcont} equals
\begin{equation*}
    \frac{1}{2\pi i}\sum_{abc=n}\mu(b)\int_{(1+\epsilon)}\left(\frac{X}{a^2b^4c^4}\right)^s\xi^\pm(s,\eta_{ab})\Tilde{h}(s)ds.
\end{equation*}
We now use the expansion
\begin{equation*}
    \eta_{ab}=\sum_{d\mid ab}\eta'_{d},  
\end{equation*}
and write $a=a_1\ell_1$, $b=b_1\ell_2$ as in the previous section to expand the above into
\begin{equation*}
    \frac{1}{2\pi i}\sum_{a_1b_1\ell_1\ell_2c=n}\mu(b_1\ell_2)\int_{(1+\epsilon)}\left(\frac{X}{a_1^2b_1^4\ell_1^2\ell_2^4c^4}\right)^s\xi^\pm(s,\eta'_{a_1b_1})\Tilde{h}(s)ds.
\end{equation*}
If we include the sum over $n$, this becomes
\begin{equation*}
\frac{1}{2\pi i}\sum_{a_1,b_1,\ell_1,\ell_2,c \text{ coprime}}\mu(a_1\ell_1c)\int_{(1+\epsilon)}\left(\frac{X}{a_1^2b_1^4\ell_1^2\ell_2^4c^4}\right)^s\xi^\pm(\eta'_{a_1b_1},s)\Tilde{h}(s)ds.    
\end{equation*}
As before, we separate into a large and a small range. We make the small range slightly smaller when compared to Section \ref{sharpGRHsect}, which will be convenient when estimating certain dual sums. To be precise, we define the two ranges by $a_1b_1^{2}\ell_1^{2/3}\ell_2c\leq Q$ and $a_1b_1^2\ell_1^{2/3}\ell_2c > Q$. Note that in the large range, $b_1$ can now be significantly smaller than before. However, the rapid decay of $1/b_1^4$, compared to $1/a_1^2$, ensures that this does not affect the error terms.
\subsubsection{The smooth large range}
The range where $a_1b_1^2\ell_1^{2/3}\ell_2c > Q$ is handled very similarly to the corresponding range in the estimates for the sharp counting function whence we provide only a sketch.

First, we separate the contribution of the various types of rings to $\xi^\pm(s,\eta'_{a_1b_1})$. The quadratic contribution is handled as in Section \ref{sharplargesect}, using Lemma \ref{redshintlemma} to shift the contour to $\Re(s)=1/2+\epsilon$. The contribution from forms with three roots is $\ll X^{1/2+\epsilon}$, as in Section \ref{redringssect}. Finally, the irreducible contribution is handled by keeping the contour at $(1+\epsilon)$ and using absolute convergence to expand the irreducible part of $\xi^\pm(s,\eta'_{a_1b_1})$. Interchanging the order of summation and using the GRH to control the sum over $a_1$ yields a total bound $\ll X^{1+\epsilon}Q^{-3/2}$ for the irreducible contribution to the large range. 

In total, we extract from the large range the expected main term coming from quadratic rings, as well as a total error
\begin{equation}\label{largesmootherror}
    \ll X^{1/2+\epsilon}+\frac{X^{1+\epsilon}}{Q^{3/2}},
\end{equation}
just as in the sharp case.

\subsubsection{The smooth small range}\label{smoothsmallallfldctsect}
We turn our attention to the small range. In the sharp case, we isolated the main and secondary terms with an error $\ll X^{3/5+\epsilon}Q^{1/5}$. In the smooth case, we will see that we can significantly improve this by applying the GRH in this range as well.

For convenience, we replace $\xi^\pm(s,\eta'_{a_1b_1})$ by the diagonalised zeta functions
\begin{equation*}
    \xi_\pm(s,\eta'_{a_1b_1}) = \sqrt{3}\xi^+(s,\eta'_{a_1b_1})\pm \xi^{-}(s,\eta'_{a_1b_1}),
\end{equation*}
so that we may apply the functional equation \eqref{genfcnal} without difficulties. We then consider
\begin{equation*}
\frac{1}{2\pi i}\sum_\sumstack{a_1,b_1,\ell_1,\ell_2,c \text{ coprime}\\ a_1b_1^2\ell_1^{2/3}\ell_2c\leq Q}\mu(a_1\ell_1c)\int_{(1+\epsilon)}\left(\frac{X}{a_1^2b_1^4\ell_1^2\ell_2^4c^4}\right)^s\xi_\pm(s,\eta'_{a_1b_1})\Tilde{h}(s)ds.    
\end{equation*}
Next, we shift the contour to $\Re(s)=-\epsilon$. From the pole at $s=1$ we pick up a residue contribution 
\begin{equation*}
    X\Tilde{h}(1)\sum_\sumstack{a_1,b_1,\ell_1,\ell_2,c \text{ coprime}\\ a_1b_1^2\ell_1^{2/3}\ell_2c\leq Q}\frac{\mu(a_1\ell_1c)}{a_1^3b_1^5\ell_1^2\ell_2^4c^4}\big(\sqrt{3}\alpha^+ \pm \alpha^{-}\big) +X\Tilde{h}(1)\beta \sum_\sumstack{a_1,b_1,\ell_1,\ell_2,c \text{ coprime}\\ a_1b_1^2\ell_1^{2/3}\ell_2c\leq Q}\frac{\mu(a_1\ell_1c)}{a_1^2b_1^4\ell_1^2\ell_2^4c^4}. 
\end{equation*}
The second of these sums matches up against the quadratic contribution from the large range, just as in the sharp case. The first sum can be extended to infinity at the cost of an error $X^{1+\epsilon}/Q^{3/2}$.

The function $\xi_-(s,\eta'_{a_1b_1})$ does not have a pole at $s=5/6$, however the function $\xi_+(s,\eta'_{a_1b_1})$ does have a pole. The contribution from the residue of this function is
\begin{equation*}
    2\sqrt{3}\gamma^+X^{5/6}\Tilde{h}(5/6)\sum_\sumstack{a_1,b_1,\ell_1,\ell_2,c \text{ coprime}\\ a_1b_1^2\ell_1^{2/3}\ell_2c\leq Q}\frac{\mu(a_1\ell_1c)}{a_1^2b_1^{11/3}\ell_1^{5/3}\ell_2^{10/3}c^{10/3}}.
\end{equation*}
As before, this sum may be extended to infinity at the cost of an error $X^{5/6+\epsilon}/Q$.

Now, $\xi_\pm(s,\eta'_{a_1b_1})$ has no additional poles, however the function $\Tilde{h}(s)$ potentially has poles at the non-positive integers. In particular, the residue at $s=0$ of $\Tilde{h}(s)$ equals $h(0)$. It will not be necessary to explicitly write down the contribution of this residue. Indeed, as in the end of Section \ref{dualS3onelevsect}, we shall see that this residue eventually cancels against other terms coming from the shifted integral
\begin{equation}\label{shiftsmoothint}
\frac{1}{2\pi i}\sum_\sumstack{a_1,b_1,\ell_1,\ell_2,c \text{ coprime}\\ a_1b_1^2\ell_1^{2/3}\ell_2c\leq Q}\mu(a_1\ell_1c)\int_{(-\epsilon)}\left(\frac{X}{a_1^2b_1^4\ell_1^2\ell_2^4c^4}\right)^s\xi_\pm(s,\eta'_{a_1b_1})\Tilde{h}(s)ds.  
\end{equation}

After applying the functional equation \eqref{genfcnal} to \eqref{shiftsmoothint}, we obtain
\begin{equation*}
    \frac{\pm 3}{2\pi i}\sum_\sumstack{a_1,b_1,\ell_1,\ell_2,c \text{ coprime}\\ a_1b_1^2\ell_1^{2/3}\ell_2c\leq Q}\mu(a_1\ell_1c)\int_{(-\epsilon)}\left(\frac{X}{a_1^2b_1^4\ell_1^2\ell_2^4c^4}\right)^s (a_1b_1)^{3-4s}\frac{\Delta^\pm(1-s)}{\Delta^\pm(s)}\xi_\pm(1-s,\mathbf{1}_{a_1b_1,\big((0)\cup (1^3)\big)'})\Tilde{h}(s)ds.
\end{equation*}
Here, $\big((0)\cup (1^3)\big)'$ simply denotes $(0)\cup (1^3)$, unless the modulus $a_1b_1$ is divisible by three, in which case $\mathbf{1}_{3,\big((0)\cup (1^3)\big)'} =\mathbf{1}_{3, \mathcal{O}_0\cup \mathcal{O}_1}$, consistent with Lemma \ref{eta3fourier}. Writing $a_1=a_1'\ell_1'$ and $b_1=b_1'\ell_2'$ to decompose the union above, we obtain
\begin{equation}\label{lastintsmoothct}
\begin{split}
    \frac{\pm 3}{2\pi i}&\sum_\sumstack{a_1',b_1',\ell_1,\ell_2,\ell_1',\ell_2',c \text{ coprime}\\ a_1'b_1'^2\ell_1^{2/3}\ell_2\ell_1'\ell_2'^2c\leq Q}\mu(a_1'\ell_1\ell_1'c)
    \\&\times\int_{(-\epsilon)}\left(\frac{X}{a_1'^2b_1'^4\ell_1^2\ell_2^4\ell_1'^2\ell_2'^4c^4}\right)^s (a_1'b_1')^{3-4s}(\ell_1'\ell_2')^{-1}\frac{\Delta^\pm(1-s)}{\Delta^\pm(s)}\xi_\pm(1-s,\mathbf{1}_{a_1'b_1'(3,\ell_1'\ell_2'),(1^3)''})\Tilde{h}(s)ds,
\end{split}
\end{equation}
with the convention that if $3\mid \ell_1'\ell_2'$, then one should consider $(1^3)''$ as equalling $(1^3)$, and if $3\mid a_1'b_1'$ then one uses $(1^3)''= (1^3)'$ as defined above. In the sequel, for the sake of brevity, we only consider the part of the sum where $3\nmid a_1'b_1'\ell_1'\ell_2'$. The part of the sum where this is not the case is handled similarly.

First, we note that the integral above can be absolutely bounded by
\begin{equation}\label{absintbdsmooth}
    \ll X^\epsilon \frac{(a_1'b_1')^{3}}{\ell_1'\ell_2'}\sup_{\Re(s)\leq -\epsilon} \abs{\xi_\pm(\mathbf{1}_{a_1'b_1'(3,\ell_1'\ell_2'),(1^3)''},1-s)}
\end{equation}

We are then led to study the zeta functions
\begin{equation*}
    \xi^\pm(1-s,\mathbf{1}_{d,(1^3)}) = \sum_\sumstack{f\in \GLtwo(\mathbb{Z})\setminus V(\mathbb{Z})\\ \pm \mathrm{Disc}(f) > 0} \frac{\mathbf{1}_{d,(1^3)}(f)}{\#\mathrm{Stab}(f)\abs{\mathrm{Disc}(f)}^{1-s}},
\end{equation*}
with $d=a_1'b_1'$, inside its domain of absolute convergence. For this, we proceed as in Section \ref{dualS3onelevsect}. As before, in \eqref{redcount}, the contribution to \eqref{absintbdsmooth} from reducible forms is bounded by
\begin{equation}
    \ll X^\epsilon \frac{1}{\ell_1'\ell_2'},
\end{equation}
which after summing yields a bound $\ll X^{\epsilon}Q$. Similarly, the contribution from forms corresponding to subrings of $C_3$-fields is negligible, cf. \eqref{c3sumbd}, giving a total contribution $\ll X^{1/2+\epsilon}$. It remains to handle the contribution corresponding to subrings of $S_3$-cubic fields. Given such a field $K$, we let $\eta_m(d,K)$ denote the number of index $m$-suborders of $K$, where $d$ is of type $(1^3)$. We remark that this number can be computed explicitly using Lemma \ref{totramsubordct} and \eqref{suborderctdw}. More precisely, it depends only on $m,d$ and the splitting types of the prime factors of $md$ in $K$. We remark that, as $d$ is not prime, we may not assume that $K$ is maximal at $d$, as we did in Section \ref{dualS3onelevsect}.

Now, as in \eqref{s3fcndecomponelev}, we decompose the $S_3$-part of the zeta function above as
\begin{equation*}
    \xi^{S_3}(s,\mathbf{1}_{d,(1^3)}) = \sum_{K \text{ $S_3$-cubic}}\frac{1}{\abs{\mathrm{Disc}(K)}^s}\sum_{m=1}^\infty \frac{\eta_m(d,K)}{m^{2s}},
\end{equation*}
where we have ignored the weighting depending on the sign of the discriminant of $K$, for the sake of simplicity. Recall also that $\eta_m(d,K)$ is multiplicative in $m$. Next, we sum over the possible splitting types of $m$ and interchange the order of summation
\begin{equation}\label{interordersmoothct}
    \sum_{m=1}^\infty m^{-2s} \sum_{(S_i)_{q\mid m}}\eta_m(d,(S_i)_{q\mid m})\sum_\sumstack{K \text{ $S_3$-cubic}}\frac{1}{\abs{\mathrm{Disc}(K)}^s}\mathbf{1}_{\text{$m$ of type $(S_i)_{q\mid m}$ in $K$}} \,\,\mathbf{1}_{\text{$e$ of type $(1^3)$ in $K$}},
\end{equation}
where $e=d/(d,m)$. Assuming $3\nmid m$, we write $\mathrm{sqf}(m)=P_{(111)}P_{(21)}P_{(3)}P_{(1^21)}P_{(1^3)}$ depending on the prescribed splitting behaviour at the various prime divisors of $m$. Then, in the notation of \eqref{Shankarthorneappl}, we use \cite[Theorem 11]{ShankarThorne} to write
\begin{equation}\label{secondshankarthorne}
\begin{split}
    &\sum_\sumstack{K \text{ $S_3$-cubic}}\frac{\mathbf{1}_{\text{$m$ of type $(S_i)_{q\mid m}$ in $K$}}\,\,\mathbf{1}_{\text{$e$ of type $(1^3)$ in $K$}}}{\abs{\mathrm{Disc}(K)}^s}  = \sum_\sumstack{\text{$P_{(1^21)}\mid D$ fundamental} \\ \text{ $K_D$ inert at $P_{(21)}$}\\K_D \text{ split at $P_{(111)}P_{(3)}$}\\ (D,eP_{(1^3)})=1}\frac{1}{\abs{D}^s}\sum_{r\mid P_{(3)}}\mu(r)\frac{1}{c_D3^{\omega(rP_{(111)})}}
    \\ \times &\left(\frac{1}{2}M_{1,D}(s)\prod_{q\mid P_{(1^3)}e}\left(\frac{1+\chi_{-3D}(q)}{q^{2s}}\right)\prod_{q\nmid 3Dme}\left(1+\frac{1+\chi_{-3D}(q)}{q^{2s}}\right)+\sum_{E\in \mathcal{L}_3(rP_{(111)},D) }M_{2,E}(s)\frac{\theta_E(eP_{(1^3)})}{(eP_{(1^3)})^{2s}}\prod_{ q\nmid 3Dme}\left(1+\frac{\theta_E(q)}{q^{2s}}\right)\right).
\end{split}
\end{equation}
We now wish to average this over $(d,m)e$. For this, we note the following facts, which follow from Lemma \ref{totramsubordct}. First, for primes $p\mid e$, $\eta_m(d,K)$ is zero unless $p$ is totally ramified in $K$. Next, if $p\mid d/e$ is partially ramified in $K$, then $\eta_m(d,K)$ is zero unless $p\mid m$. Finally, if $p\mid d/e$ is unramified in $K$, then $\eta_m(d,K)$ is zero unless $p^3\mid m$. As in Section \ref{dualS3onelevsect}, we always have the bound $\eta_m(d,K)\ll m^{1/3+\epsilon}$ irrespective of $d$.

Recall that $\theta_E(p)$ is nonzero only if $p$ is split or inert in $E$. In the first case, it equals $2$ and in the latter case it equals $-1$. Hence, $\theta_E$ is essentially equal to the Artin character of $E$, differing only when $p$ is partially ramified. The key is now to use the GRH to control averages over $\theta_E$, cf. \eqref{lastGRHonelev} and \eqref{grhoneoverL}. We obtain the exact same bound as in \eqref{grhoneoverL}. 

To perform the averaging, we will also need input from class field theory. More specifically we use that
\begin{equation*}
    \#\mathcal{L}_3(rP_{(111)},D)\ll \abs{rP_{(111)}}^\epsilon (\# \mathcal{L}_3(1,D)+1),
\end{equation*}
which follows from taking $3$-Sylow subgroups of the exact sequence \cite[Eq. (3.7)]{CohenThorne}. Next, as the primes which ramify in a field all divide the discriminant, we have that
\begin{equation}\label{aveclassgpbd}
    \sum_{f\leq F}\sum_\sumstack{ \abs{D}\leq tf\\ f\mid D} \#\mathcal{L}_3(rP_{(111)}, D)\ll \abs{rP_{(111)}}^\epsilon (tF)^{1+\epsilon}.
\end{equation}
Noting that this is $\ll t^{1+\epsilon}F^{3/2}$ is sufficient for our purposes.

We now estimate \eqref{lastintsmoothct} using \eqref{interordersmoothct} and \eqref{secondshankarthorne}. We first interchange the order of summation between the sum over $a_1'$ and that over $m$ and $(S_i)$. Next, we write $a_1' = ef_1f_2f_3f_4f_5$, where $e=a_1'/(a_1',m)$ and $f_1\mid (a_1',m)$ is the part of $(a_1',m)$ which is prescribed to be totally split, $f_2$ is prescribed to be partially split, $f_3$ inert, etc. Combining this with the observations above on divisibility of $m$, the GRH, and the bound \eqref{aveclassgpbd}, we obtain a total contribution
\begin{equation*}
    \ll Q^{3/2+\epsilon}
\end{equation*}
coming from the part of \eqref{secondshankarthorne} involving the sum over $E$. Similarly, we may bound the contribution from the rest of the $D$-sum involving the various characters to also be $\ll Q^{3/2+\epsilon}$, where we also write the condition that $K_D$ be inert at $P_{(21)}$ and split at $P_{(111)}P_{(3)}$ in terms of characters. Finally, just as in the end of Section \ref{dualS3onelevsect}, we obtain a contribution from the part of the $D$-sum which does not involve any characters. After expressing this in terms of standard zeta functions, we may shift this contribution to $\Re(s)=1/2-\epsilon$. The contribution from the residue at $s=0$ cancels against the previously obtained residue contribution for a total contribution
\begin{equation*}
    \ll X^{1/2+\epsilon}.
\end{equation*}
Optimising these error terms against those coming from the large range, see \eqref{largesmootherror}, we find the optimal choice $Q=X^{1/3}$ giving a total error
\begin{equation*}
    \ll X^{1/2+\epsilon},
\end{equation*}
which proves Theorem \ref{smoothctthm}.

\subsection{Splitting conditions}\label{splitcondsupsect}
We now describe how to accommodate prescribed splitting conditions in the sharp, and smooth, counting functions of cubic fields. To simplify the presentation, we consider the case of an arbitrary splitting condition at a single prime, although our methods work for any finite number of primes. Thus, we fix a prime $p$ and impose the splitting condition $S$.

\subsubsection{The sharp counting function}\label{splitcondsectsharp}
To study the sharp counting function, the relevant sieve to study is
\begin{equation*}
    \sum_{n\geq1}\mu(n)N^\pm(W_n, \mathbf{1}_{p,S};X),
\end{equation*}
where $N^\pm(W_n, \mathbf{1}_{p,S};X)$ counts the number of cubic forms which are nonmaximal at $n$ and split according to $S$ at $p$, together with a condition on the sign of the discriminant. If $S$ is one of the unramified splitting types, then nonmaximality at $p$ is impossible, whence we need only sum over $n$ such that $p\nmid n$. Using the discriminant-reducing identity \eqref{discredsum}, one then has
\begin{equation*}
    \sum_{n\geq1}\mu(n)N^\pm(W_n, \mathbf{1}_{p,S};X) = \sum_\sumstack{a,b,c \text{ coprime}\\ p\nmid abc}\mu(ac)N^\pm\big(\eta_{ab}\otimes \mathbf{1}_{p,S};X/(a^2b^4c^4)\big).
\end{equation*}

If $S$ is the totally ramified splitting type $(1^3)$, we instead use a slightly different discriminant-reducing identity, which follows from \cite[Eq. (69)]{BST}, which implies that
\begin{equation}\label{totramdiscred}
        \sum_{n\geq1}\mu(n)N^\pm(W_n, \mathbf{1}_{p,(1^3)};X) = \sum_\sumstack{a,b,c \text{ coprime}\\ p\nmid abc}\sum_{de\mid p}\mu(ace)N^\pm\big(\eta_{abe}\otimes \eta_{2,d} \otimes \mathbf{1}_{p/de,(1^3)};X/(a^2b^4c^4d^2e^4)\big),
\end{equation}
where $\eta_{2,p}(x)$ counts the number of roots of $x\nsmod{p}$ which have multiplicity at least $2$. Said differently, 
\begin{equation*}
    \eta_{2,p}(x) = \mathbf{1}_{p,(1^3)}+\mathbf{1}_{p,(1^21)}+(p+1)\mathbf{1}_{p,(0)}.
\end{equation*}
For the partially ramified splitting type, we instead have that \cite[Eq. 67]{BST}
\begin{equation}\label{partramdiscred}
        \sum_{n\geq1}\mu(n)N^\pm(W_n, \mathbf{1}_{p,(1^21)};X) = \sum_\sumstack{a,b,c \text{ coprime}\\ p\nmid abc}\sum_{de\mid p}\mu(acd)N^\pm\big(\eta_{abe}\otimes \eta_{2,d} \otimes \mathbf{1}_{p/de,(1^21)};X/(a^2b^4c^4d^2e^2)\big).
\end{equation}
To consider prescribed splitting types at multiple primes, one modifies the sieves above appropriately at each such prime.

Next, one proceeds as in Sections \ref{sharpGRHsect} and \ref{sharpnoGRHsect} to decompose the sum and split into a small and large range. To handle the small range, we use exact formulas for the various Fourier transforms, due to Mori \cite{Mori}, see also \cite[Theorem 11]{TTOrb}. In particular,
\begin{equation*}
    \widehat{\eta_{2,p}}\ll \frac{\mathbf{1}_{p,(0)}}{p}+\frac{\mathbf{1}_{\mathrm{Disc}\equiv 0 \nsmod p}}{p^2}, \,\,\, \widehat{\mathbf{1}}_{p, (1^3)}\ll \frac{\mathbf{1}_{p,(0)}}{p^2}+\frac{1}{p^3},\,\,\,\widehat{\mathbf{1}}_{p,(1^21)}\ll \frac{\mathbf{1}_{p,(0)}}{p}+\frac{\mathbf{1}_{\mathrm{Disc}\equiv 0 \nsmod p}}{p^2}+ \frac{1}{p^3},
\end{equation*}
and for unramified splitting types $S$, we have that
\begin{equation*}
    \widehat{\mathbf{1}}_{p,S}\ll \mathbf{1}_{p,(0)}+ \frac{\mathbf{1}_{p,(1^3)}}{p} +\frac{1}{p^2}.
\end{equation*}
Using these bounds, we may bound the contribution from the small range, with or without the GRH, similar to \cite[Section 6]{BTT}, except that we need not average to obtain a bound for the splitting type $(1^3)$, which seems to be a feature of our choice of discriminant-reducing identity. 

We obtain contributions
\begin{equation*}
    X^{3/5}Q^{1/5}p^{4/5}, \text{ and } X^{3/5}Q^{1/5}p^{-4/5}
\end{equation*}
for the unramified splitting types, and the totally ramified splitting type respectively. For the partially ramified splitting type, we obtain after averaging over $p\leq P$, a bound
\begin{equation*}
    X^{3/5}Q^{1/5}P^{4/5}.
\end{equation*}

The contribution from the large range is handled as in previous sections, taking advantage of oscillations in the sum, and bounding the reducible contribution separately. For the unramified splitting types, we obtain a bound of the same quality as in Sections \ref{sharpGRHsect} and \ref{sharpnoGRHsect}. For the partially ramified splitting type, we obtain a bound of this quality, even after averaging the error term as in \cite[Theorem 1.4]{BTT}. 

For the totally ramified splitting type, the situation is slightly more complex. Under the GRH, the bound from Section \ref{sharpGRHsect} simply gets multiplied by $p^{-2}$. When not assuming the GRH, we would like to conclude that we may simply multiply the bound from Section \ref{sharpnoGRHsect} by $p^{-2}$. However, Proposition \ref{zerodenspropcube} is not sufficiently strong to imply such a bound for arbitrarily large $p$. The issue is that fields of discriminant bounded by some $Y$, which are totally ramified at $p$ are quite rare among all fields of discriminant bounded by $Y$, for large $p$. Instead, we restrict our attention to small $p$, say $p\leq X^{1/100}$ in Theorem \ref{uncondsharpctthm}. Just as under the GRH, there is no uniformity problem under the quasi-Riemann hypothesis for $\zeta_K(s)$ and the second term from \eqref{quasibalance} is simply multiplied by $p^{-2}$. 

After optimising the cut-off variables, we arrive at the stated bounds for $E_1(\Sigma_p)$ and $E_2(\Sigma_p)$ from Theorem \ref{splitcondthmQ}, and the stated bound for $E_4(\Sigma_p)$ from Theorem \ref{quasiapplthm}.

\subsubsection{The smooth counting function}\label{smoothsplitsect}
We now describe how to incorporate splitting conditions in the smooth counting function studied in Section \ref{smoothctallfieldsect}, under the GRH. 

We begin by considering \eqref{smoothfirstcont}, and use the discriminant-reducing sieve \eqref{Shintanidiscred}, together with versions of the sieves \eqref{totramdiscred} and \eqref{partramdiscred} restated in the language of Shintani zeta functions. As before, we split into a large and small range and bound the contribution from the large range as in Section \ref{splitcondsectsharp}. 

In the small range, we shift the contour to $\Re(s)=-\epsilon$, and consider an analogue of \eqref{lastintsmoothct}. One first switches to maximal overrings, which requires an analogue of Lemma \ref{totramsubordct} for the splitting type $(1^21)$, proven in the same way. Next, by applying \cite[Theorem 11]{ShankarThorne}, we obtain an error of size $\ll Q^{3/2+\epsilon}$, multiplied by a factor which essentially equals $\widehat{\delta}_1(\mathbf{1}_{p,S})$ from \cite[Theorem 3.1]{BTT}. For unramified splitting types, this factor is $\ll p^2$. For the totally ramified splitting type, it is $\ll p$ and for the partially ramified splitting type, its average over $p\leq P$ is $\ll P^2$. In the ramified cases, we should actually use that $\widehat{\delta}_1(\eta_{p})\ll p$ and $\widehat{\delta}_1(\eta_{2,p})\ll p^2$ as well, but as the discriminants from \eqref{totramdiscred} and \eqref{partramdiscred} are decaying with respect to $e$ and $d$, the error-term contribution coming from this part of the sieve can be subsumed in the error term arising from $\widehat{\delta}_1(\mathbf{1}_{p,S})$.

Optimising the value of $Q$ yields the claimed bound for $E_3(\Sigma_p)$ in Theorem \ref{splitcondthmQ}.

\subsection{Function fields}\label{fcnfldctsect}

We end by briefly describing how to generalise our results to function fields. For the sake of brevity, we restrict our proof sketch to the counting function without splitting conditions.

We are then interested in counting field extensions of $\fq(T)$, where $q$ is coprime to $2$ and $3$. Now, as there are no Archimedean primes in $\fq(T)$, counting problems over this ground field closely resemble smooth counting problems over $\mathbb{Q}$. Moreover, as the GRH is proven for Dedekind zeta functions associated with extensions of $\fq(T)$, we shall see that we will obtain results of the same quality as those obtained for the corresponding smooth problem under GRH for extensions of $\mathbb{Q}$. Indeed, we count cubic field extensions of $\fq(T)$ by extending the methods of Section \ref{ffields}, similar to how we obtained the results of Section \ref{smoothctallfieldsect} by extending the methods of Section \ref{dualS3onelevsect}.

Let $X=q^{2n}$ for an integer $n\geq 1$. To study cubic fields of discriminant equal to $X$, where the prime $P_\infty$ splits according to the splitting type $S_\infty$, we use the inclusion-exclusion sieve
\begin{equation*}
    \sum_{F}\mu(F)N^{S_\infty}(W_F;X),
\end{equation*}
see \cite[Section 5]{Ahlquist2}. Here the summation ranges over monic squarefree $F$, and $N^{S_\infty}(W_F;X)$ counts the number of $\fq[T]$-integral binary cubic forms with discriminant equal to $X$, of type $S_\infty$ at $P_\infty$, which are nonmaximal at all primes dividing $F$. In the sequel, we write $R=\fq[T]$. 

Now, just as over $\mathbb{Q}$, we may decompose this sieve into 
\begin{equation*}
    \sum_{f_1,g_1,\ell_1,\ell_2,h \text{ coprime}}\mu(f_1\ell_1h) N^{S_\infty}(\eta'_{f_1g_1};X/\abs{f_1^2g_1^4\ell_1^2\ell_2^4h^4}).
\end{equation*}
We separate the range of summation into a large and small range according to whether $\abs{f_1g_1^2\ell_2h}\abs{\ell_1}^{2/3}\leq Q$ or not. In the large range, one proceeds analogously to Section \ref{sharpGRHsect} to isolate the reducible contribution and obtain an error
\begin{equation*}
    \ll \frac{X^{1+\epsilon}}{Q^{3/2}}.
\end{equation*}

In the small range, we forego the use of Shintani zeta functions and instead use the geometry-of-numbers methods from \cite{Ahlquist2} to study the counting functions $ N^{S_\infty}(\eta'_{f_1g_1};X/\abs{f_1^2g_1^4\ell_1^2\ell_2^4h^4})$. As described in Section \ref{dualfcnfldonelev}, one uses the finite Fourier transform to reduce the study of this counting function to a study of certain sums over degenerate and non-degenerate dual forms $y$.

The contribution from the non-degenerate forms $y$ is handled as in Section \ref{dualfcnfldonelev}, using a function field generalisation of \cite[Theorem 11]{ShankarThorne}. The contribution from such forms is analogous to the shifted integral from Section \ref{smoothsmallallfldctsect}, and its total contribution to the error term is bounded by
\begin{equation*}
    \ll Q^{3/2+\epsilon}.
\end{equation*}
By optimising $Q$, one obtains the desired error-term bound $\ll X^{1/2+\epsilon}$.

Now, there is also a contribution coming from degenerate forms $y$. In \cite{Ahlquist2}, this contribution is shown to equal the expected main and secondary terms, up to an error which sums to 
\begin{equation*}
    \ll X^{2/3+\epsilon},
\end{equation*}
see \cite[Eq. (5.9)]{Ahlquist2} and the subsequent computations. An error term of this size is clearly unacceptable for our purposes, whence we now describe how to better handle the contribution from degenerate forms. In fact, we show that the error term can be removed completely from the degenerate contribution. Our computations are analogous to the computations for determining the residues of the Shintani zeta functions $\xi^\pm(s)$ in \cite{Shintani}.

\subsubsection{The degenerate contribution}\label{degencontribsect}
In \cite[Section 5.2]{Ahlquist2} the contribution from degenerate $y$ is handled by isolating the contribution from $y$ of the form $(0,0,0,y_4)$ and discarding the contribution from other $y$. It turns out that if one instead considers the contribution from all degenerate $y$, then one obtains precisely the expected main and secondary terms, with no error term.

First, cf. \cite[Eq. (5.5)]{Ahlquist2}, the Fourier transform
\begin{equation*}
    \widehat{\eta}'_P = \frac{\mathbf{1}_{P,(0)}+\mathbf{1}_{P,(1^3)}}{\abs{P}}. 
\end{equation*}
The contribution coming from $y\equiv 0 \nsmod{P}$ is handled using inclusion-exclusion as in \cite[Eq. (5.6)]{Ahlquist2} so that we may focus on the contribution from forms of type $(1^3)$ modulo a divisor $r\mid f_1g_1$. 

Now, just as in \cite[Proposition 2.10]{Shintani}, we have an orbital description of the degenerate forms. Indeed, all $R$-integral degenerate forms with a triple root in $\fq(T)$ lie in the $\GLtwo(R)$ orbit of a unique form $(0,0,0,f)$, with $f\in R$ monic. Moreover, an $R$-integral degenerate form with a double root in $\fq(T)$ lies in the orbit of a unique form $(0,0,f',f)$, with $f,f'$ monic, $f'\neq 0$, and $\abs{f}< \abs{f'}$. Hence, the degenerate forms $y$ with a triple root at $r$ lie in the orbit of $(0,0,0,f)$, $(r,f)=1$ or $(0,0,rf',f)$, with $(f',r)=1$ and $\abs{f}<\abs{rf'}$. 

The degenerate contribution can now be studied explicitly by using the unfolding method cf. \eqref{oscintff}, and a certain $KAN$-decomposition for $\GLtwo(R)$, see e.g. \cite[Section 4.5]{Ahlquist2}. For simplicity, we consider the case $r=P$. One proceeds by decomposing the open compact set $B$ defined prior to \eqref{ffonelevint} into a union of disjoint boxes
\begin{equation*}
    B=\bigcup_{i=1}^{i_0} (v^{(i)} +C),\,\,\, C=\{(x_1,x_2,x_3,x_4): \abs{x_i} < c\},
\end{equation*}
for a sufficiently small $c$. Similarly, we define the inverted box $C^{-1}$ by replacing the inequality by $\abs{x_i}< c^{-1}$. After applying the unfolding method, the contribution from degenerate $y$ with a triple root in $R$ becomes up to a constant, cf. \eqref{oscintff},
\begin{equation*}
    \frac{1}{q-1}\sum_{i=1}^{i_0}c^4\abs{\lambda_0}^4\int_{g\in \GLtwo_Y(K_\infty)/(N\cap \GLtwo(R))}\sum_{y}\chi_\infty([v^{(i)},g^Ty]/P)\mathbf{1}_{gy\in rC^{-1}}dg,
\end{equation*}
where $N$ is the set of lower-triangular matrices with both diagonal elements equalling $1$ and $y$ ranges over nonzero dual forms $(0,0,0,a)$. For technical reasons, we must drop the condition that $P\nmid a$ when using the unfolding method. Define
\begin{equation*}
    a(t)= \begin{pmatrix}
        t &0\\0 & t^{-1}
    \end{pmatrix}.
\end{equation*}
Then, after taking quotients by $N$ and using $K$-invariance of $B$, we see that our integral is essentially
\begin{equation*}
    \frac{1}{q-1}\sum_{i=1}^{i_0}c^4\abs{\lambda_0}^4\int_{g=\lambda a(t)}\sum_{a}\chi_\infty([v^{(i)},g(0,0,0,a)]/P)\mathbf{1}_{gy\in PC^{-1}}\frac{d\lambda dt}{\abs{\lambda t^3}}.
\end{equation*}
Here, the absolute value $\abs{\lambda}=\abs{\lambda_0}$ is fixed, and $t$ is monic, in the sense that its Laurent expansion has leading coefficient equal to $1$. The measure $d\lambda$ is normalised so that it gives the set of $\lambda$ with $\abs{\lambda }<1$  measure $1$. The same holds true for $dt$. Technically, instead of considering the integral which is written above, one should consider limits of integrals with the added restriction that $\abs{t}<q^m$, and let $m\to \infty$. In the sequel the condition $\abs{t} < q^m$ is implicit in our integrals. Now, acting by $g$ above yields
\begin{equation*}
    \frac{1}{q-1}\sum_{i=1}^{i_0}c^4\abs{\lambda_0}^4\int_{g=\lambda a(t)}\sum_{a}\chi_\infty(v_4^{(i)}\lambda a/Pt^3)\mathbf{1}_{\abs{\lambda a/t^3}< \abs{P}c^{-1}}\frac{d\lambda dt}{\abs{\lambda t^3}},
\end{equation*}
where $v_4^{(i)}$ is the fourth coordinate of $v^{(i)}$.

We now split into two cases. First, if $\abs{v_4^{(i)}}< c$, then without loss of generality, $v_4^{(i)}=0$, as the box $v_4^{(i)}+C=C$. In this case, the character above equals $1$ for all choices of $g$ and we see that the integral above may be computed explicitly. More specifically, one computes
\begin{equation*}
    \sum_{\abs{a}<\abs{Pt^3}c^{-1}/\abs{\lambda_0}} 1= (\abs{Pt^3}c^{-1}/\abs{\lambda_0}-1)/(q-1),
\end{equation*}
assuming that $\abs{Pt^3}c^{-1}/\abs{\lambda_0} > 1$. One also notes that the integral over $\lambda$ provides a factor $q-1$ in this case. The full $t$-integral over the first of these terms is divergent, however we shall see that because of cancellation, this is not a problem if one interprets all integrals as the appropriate limits described above. Integrating the above, we obtain, excluding the factor $(q-1)^{-1}$,
\begin{equation*}
\vol(B_0)\abs{P}\abs{\lambda_0}^3\int_{\abs{t}^3>c\abs{\lambda_0}/\abs{P}}dt-c\abs{\lambda_0}^4\vol(B_0)\int_{\abs{t}^3>c\abs{\lambda_0}/\abs{P}}\frac{dt}{\abs{t}^3}
\end{equation*}

We should also consider the contribution from the more difficult case $\abs{v_4^{(i)}}\geq c$. Then, the box $v^{(i)}+C$ does not contain any point with last coordinate zero. From the definition of $\chi_\infty$ one sees after averaging over $\lambda$ and summing over $a$ that such a sum is nonzero only if $\abs{v_4^{(i)}}< \abs{Pt^3}/\abs{\lambda_0}$, in which case it equals $-1$. Indeed, this follows from the relation
\begin{equation}\label{lambdaintrel}
    \int_{\abs{\lambda}=1}\chi_\infty(\lambda a)d\lambda =\begin{cases}
        q-1, &\text{ if $\abs{a}< q^{-1}$,}\\
        -1, &\text{ if $\abs{a}= q^{-1}$,}\\
        0, &\text{ if $\abs{a}> q^{-1}$,}
    \end{cases}
\end{equation}
and the fact that there are exactly $q^k$ monic polynomials of degree equal to some $k$, and that there are $(q^k-1)(q-1)^{-1}$ nonzero monic polynomials of degree strictly less than $k$. 

Let $B_u$ denote the set $\{(x_1,x_2,x_3)\in K_\infty^3: (x_1,x_2,x_3,u)\in B\}$. Then, using that $\vol(C)=c^4$, we see that the contribution from the above term equals, up to the factor $(q-1)^{-1}$,
\begin{equation*}
    -\sum_{i=1}^{i_0}c^4\abs{\lambda_0}^4\int_{\abs{t}^3 > c\abs{\lambda_0}/\abs{P}}\mathbf{1}_{\abs{v_4^{(i)}}< \abs{Pt}^3/\abs{\lambda_0}}\frac{dt}{\abs{t^3}}=-\abs{\lambda_0}^4\int_{\abs{t}^3 > c\abs{\lambda_0}/\abs{P}}\int_{\abs{u} \geq c}\vol(B_u)\mathbf{1}_{\abs{u}< \abs{Pt}^3/\abs{\lambda_0}}du\frac{dt}{\abs{t^3}}. 
\end{equation*}
Interchanging the order of integration, we obtain
\begin{equation*}
\begin{split}
    -\abs{\lambda_0}^4\int_{\abs{u}\geq c}&\vol(B_u)\int_{\abs{t}^3 > \abs{u\lambda_0}/\abs{P}}\frac{dt}{\abs{t^3}}du =-\abs{\lambda_0}^4\int_{u}\vol(B_u)\int_{\abs{t}^3 > \abs{u\lambda_0}/\abs{P}}\frac{dt}{\abs{t^3}}du\\&+\vol(B_0)\abs{\lambda_0}^4\int_{\abs{u} < c}\int_{\abs{t}^3 > \abs{u\lambda_0}/\abs{P}}\frac{dt}{\abs{t^3}}du. 
\end{split}
\end{equation*}
Interchanging the order of integration in the last term yields
\begin{equation*}
    -\abs{\lambda_0}^4\int_{u}\vol(B_u)\int_{\abs{t}^3 > \abs{u\lambda_0}/\abs{P}}\frac{dt}{\abs{t^3}}du+c\vol(B_0)\abs{\lambda_0}^4\int_{\abs{t}^3 > c\abs{\lambda_0}/\abs{P}}\frac{dt}{\abs{t}^3} +\vol(B_0)\abs{\lambda_0}^3\abs{P}\int_{\abs{t}^3\leq c\abs{\lambda_0}/\abs{P}}dt. 
\end{equation*}

One studies the contribution from degenerate forms with a double root similarly. Let 
\begin{equation*}
    n(f)=\begin{pmatrix}
        1 & 0 \\ f & 1
    \end{pmatrix}.
\end{equation*}
Then, we consider the integral
\begin{equation*}
    c^4\abs{\lambda_0}^4\int_{\abs{\lambda}=\abs{\lambda_0},\,\, \abs{f} <1}\sum_{a,b}\chi_\infty([v^{(i)},(0,0,b\lambda/t, (fb+a/P) \lambda/t^3)])\mathbf{1}_{(0,0,b\lambda/t, (fb+a/P) \lambda/t^3)\in C^{-1}}df\frac{d\lambda}{\abs{\lambda}}\frac{dt}{\abs{t}^3},
\end{equation*}
where the sum ranges over monic $a,b$ with $\abs{a} < \abs{Pb}$. Note that as $b\neq 0$, the condition in the indicator function implies that $\abs{t} >c\abs{\lambda_0}$. Moreover, for every fixed $b$, after interchanging order of summation, the indicator function implies that one should integrate over $\abs{fb} < c^{-1}\abs{t^3}/\abs{\lambda_0}$. Using \eqref{lambdaintrel}, this shows that the integral over $f$ and $\lambda$ is zero, unless $\abs{v_4^{(i)}} <c$, i.e. $v_4^{(i)}=0$, so we may assume that this is the case. Then $f$ simply integrates to $c^{-1}\abs{t}^3/\abs{b\lambda_0}$ and the integral becomes
\begin{equation*}
    \frac{\abs{P}\abs{\lambda_0}^3c^3}{q-1}\sum_{b}\int_{ \abs{\lambda}=\abs{\lambda_0},\, \abs{t}> c\abs{\lambda_0}}\chi_\infty(v_3^{(i)}b\lambda/t)\mathbf{1}_{\abs{b\lambda}/\abs{t}<c^{-1}}\frac{d\lambda}{\abs{\lambda}}dt.
\end{equation*}
Now, $B$ contains no degenerate points so that $\abs{v_3^{(i)}}\geq c$. Hence, the $b$-sum and the $\lambda$-integral yield $-1$, if $\abs{v_3^{(i)}} < \abs{t}/\abs{\lambda_0}$, and $0$ else. Thus, after summing over all boxes we obtain a contribution
\begin{equation*}
    -\abs{P}\abs{\lambda_0}^3\int_{u}\vol(B_{u,0})\int_{\abs{t} > \abs{u\lambda_0}}dtdu =-\abs{P}\abs{\lambda_0}^3\vol(B_0)\int_{t}dt+\abs{P}\abs{\lambda_0}^3\int_{u}\vol(B_{u,0})\int_{\abs{t} \leq \abs{u\lambda_0}}dtdu,
\end{equation*}
excluding the factor $(q-1)^{-1}$

Adding together the various integrals, we obtain a total contribution
\begin{equation*}
\begin{split}
    &\frac{1}{q-1}\left(\abs{P}\abs{\lambda_0}^4\int_{u}\vol(B_{u,0})\abs{u}du -\abs{\lambda_0}^4\int_{u}\vol(B_u)\int_{\abs{t}^3 > \abs{u\lambda_0}/\abs{P}}\frac{dt}{\abs{t^3}}du\right).
\end{split}
\end{equation*}
The first of these integrals is precisely the reducible main term, and the second of the integrals is the irreducible secondary term. The irreducible main term comes from the contribution $y=0$, as desired.

\end{document}